\pdfoutput=1
\documentclass{amsart}
\usepackage{amsmath}
\usepackage{amsthm}
\usepackage{amssymb}
\usepackage{graphicx}
\usepackage{url}
\theoremstyle{plain}
\newtheorem{theorem}{Theorem}
\newtheorem{fact}{Fact}
\newtheorem{lemma}{Lemma}

\theoremstyle{definition}
\newtheorem{definition}{Definition}

\newtheorem{example}{Example}
\newtheorem{notation}{Notation}

\newcommand{\gpv}{\operatorname{GPV}}
\newcommand{\de}{\operatorname{deg}}
\newcommand{\kh}{\operatorname{KH}}
\newcommand{\rank}
{\operatorname{rank}}

\begin{document}
\begin{abstract}
In this paper, we introduce a new nontrivial filtration, called F-order, for classical and virtual knot invariants; this filtration produces filtered knot invariants, which are called finite type invariants similar to Vassiliev knot invariants.  Finite type invariants introduced by Goussarov, Polyak, and Viro are well-known, and we call them finite type invariants of GPV-order.   
We show that for any positive integer $n$ and for any classical knot $K$, there exist infinitely many of nontrivial classical knots, all of whose finite type invariants of GPV-order $\le n-1$, coincide with those of $K$ (Theorem~\ref{thmGPV}).
Further, we show that for any positive integer $n$, there exists a nontrivial virtual knot whose finite type invariants of our F-order $\le n-1$ coincide with those of the trivial knot (Theorem~\ref{thmFn}).  In order to prove Theorem~\ref{thmGPV} (Theorem~\ref{thmFn}, resp.), we define an $n$-triviality via a certain unknotting operation, called virtualization (forbidden moves, resp.), and for any positive integer $n$, find an $n$-trivial classical knot (virtual knot, resp.).  
\end{abstract}
\keywords{Finite type invariants; Knots; Virtual knots; Unknotting operations; Virtualizations; Forbidden moves \\
MSC2010: 57M27; 57M25}
\author[N.~Ito]{Noboru Ito}
\address{Graduate School of Mathematical Sciences, The University of Tokyo, 3-8-1, Komaba, Meguro-ku, Tokyo 153-8914, Japan}
\email{noboru@ms.u-tokyo.ac.jp}
\author[M.~Sakurai]{Migiwa Sakurai}
\address{National Institute of Technology, Ibaraki College, 866 Nakae, Hitachinaka-shi, Ibaraki 312-8508, Japan}
\email{migiwa@gm.ibaraki-ct.ac.jp}
\title[On $n$-trivialities for some unknotting operations]{On $n$-trivialities of classical and virtual knots for some unknotting operations}
\maketitle
\section{Introduction}
In this paper, we introduce a new nontrivial filtration for classical and virtual knot invariants.  
Here, a classical knot is an embedded circle in $\mathbb{R}^3$ and a virtual knot is a stable equivalence class of an image of a regular projection to a closed surface of an embedded circle in $\Sigma \times I$, where $\Sigma$ is a closed orientable surface and $I$ is an interval that is homeomorphic to $[0, 1]$ (for the definitions of knots and virtual knots, see Section~\ref{sec_pre} and for stable equivalences, see Carter-Kamada-Saito~\cite{CKS}).  

In 1990, Vassiliev \cite{Va} introduced a filtered space of knot invariants via a standard unknotting operation, called a \emph{crossing change}, which is an exchange of the role of an over path and an under path of a crossing of a knot diagram.   In 2000, Goussarov, Polyak, and Viro \cite{GPV} introduced another filtration, called \emph{GPV-order}, by using another unknotting operation, called \emph{virtualization} (Fig.~\ref{fig:virtualization}), for classical and virtual knots.  
From the theory by Goussarov, Polyak, and Viro \cite{GPV}, we have another framework to obtain concrete Vassiliev invariants from the dual spaces generated by chord diagrams.  We also note that from Goussarov-Polyak-Viro \cite{GPV}, Vassiliev invariants are extended to virtual knots.  In this paper, filtered knot invariants, which are similar to Vassiliev invariants and Goussarov, Polyak, and Viro invariants are called \emph{finite type invariants}.  
In 1990, the notion of \emph{$n$-triviality} was introduced by Ohyama \cite{Ohyama1} and in 1992, Taniyama \cite{Taniyama} generalized it to \emph{$n$-similarity}.  In Vassiliev theory, the notion of $n$-triviality, which is a special case of $n$-similarity, plays a significant role; here, a relationship between local moves and finite type invariants is obtained.  
For example, by Goussarov \cite{goussarov1}, for any positive integer $n$, a knot $K$ is $n$-trivial ($n$-similar to $L$, resp.) if and only if $v_m (K)$ $=$ $v_m ({\textrm{unknot}})$ ($v_m (K)$ $=$ $v_m (L)$, resp.) ($m \le n-1$).  

However, to the best of our knowledge, for an integer $n$ ($> 2$), an example of $n$-trivial classical or virtual knot of GPV-order is still not known.  
The notion of $n$-triviality is defined as follows (see Ohyama \cite{Ohyama2}).  Let $\mathcal{A}$ be a collection of $n$ pairwise disjoint, nonempty subsets, each of which consists of isolated sufficiently small disks on which unknotting operations are applied.  For any subset $T$ of the power set of $\mathcal{A}$, a knot diagram is denoted by $K(T)$ by applying a certain unknotting operation to each small disk in $T$.  Then, a knot $K$ is \emph{$n$-trivial} if there exists $T$ such that $K(\emptyset)$ is a diagram of $K$ and $K(T)$ is an unknot diagram (note that, in Goussarov-Polyak-Viro~\cite{GPV}, a slightly different definition of $n$-triviality is obtained).    

In this paper, we show that for any positive integer $n$ and for any classical knot $K$, there exist infinitely many of nontrivial classical knots, all of whose finite type invariants of GPV-order $\le n-1$, coincide with those of $K$ (Theorem~\ref{thmGPV}).

Second, we focus on an unknotting operation, called forbidden moves (Kanenobu \cite{kanenobu2} and  Nelson \cite{nelson} independently showed that this move is an unknotting operation).
By using forbidden moves, 
we introduce a new filtration, called \emph{F-order}, for classical and virtual knot invariants (Definition~\ref{f-order}).
We show that for any positive integer $n$, there exists a nontrivial virtual knot whose finite type invariants of F-order $\le n-1$ coincide with those of the trivial knot (Theorem~\ref{thmFn}).  
\section{Preliminaries}\label{sec_pre}
Suppose that $f : {\mathbb S}^1 \rightarrow {\mathbb R}^3$ is a smooth embedding.  An image
$f(\mathbb S^1)$ is called a \emph{knot} or a \emph{classical knot}.   
Two knots $K_0$ and $K_1$ are \emph{isotopic}
if there exists an isotopy $h_t : {\mathbb R}^3 \rightarrow {\mathbb R}^3, t \in [0, 1]$, with $h_0 = \operatorname{id}$ and $h_1 (K_0)$ $=$ $K_1$.   
A \emph{long knot} is a smooth embedding $\mathbb{R}$ $\to$ $\mathbb{R}^3$ which coincides with the standard embedding outside a compact set.  
An \emph{isotopy} of long knots is a smooth isotopy consisting of a family of embeddings as above.  
Let $K$ be a knot (long knot, resp.).  
Let $p$ be a regular projection ${\mathbb R}^3 \rightarrow {\mathbb R}^2$, i.e., $p(K)$ is a generic immersed plane closed curve (long curve).  In particular, every singular point of the image $p(K)$ is a transverse double point.  The image $p(K) (\subset \mathbb{R}^2)$, up to plane isotopy, with over/under information of each double point is called a \emph{diagram} of $K$.

In the classical knot theory, long knots are introduced for purely technical reasons since adding the point at the infinity point turns a long knot into a knot in the sphere $\mathbb{S}^3$ and this construction obtains an one-to-one correspondence between the isotopy classes of knots and the isotopy classes of long knots.  

A \emph{virtual knot diagram} is a knot diagram having virtual crossings (right) as well as real crossings in Fig.~\ref{fig:crossings} (left). \emph{Two virtual knot diagrams are equivalent} if one can be obtained from the other by a finite sequence of \emph{generalized Reidemeister moves} in Fig.~\ref{fig:GRM}. The equivalence class of virtual knot diagrams modulo the generalized Reidemeister moves is called a {\em virtual knot}.  Similarly, a \emph{long virtual knot diagram} is a long knot diagram having virtual crossings as well as real crossings. A {\em long virtual knot} is an equivalence class of long virtual knot diagrams modulo the generalized Reidemeister moves.

\begin{figure}[htbp]
\centering
\includegraphics[width=3cm]
{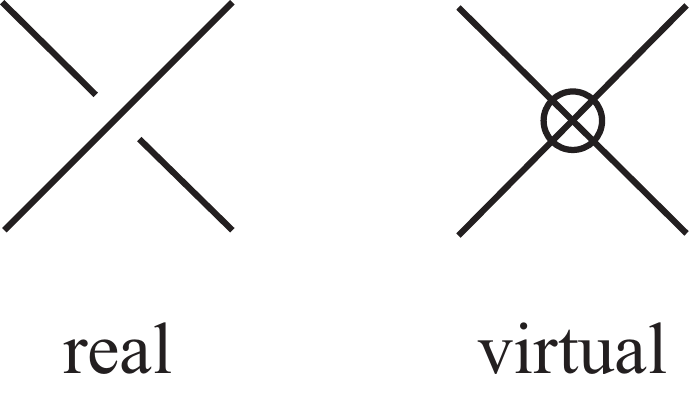}
\caption{Crossing types.}
\label{fig:crossings}
\end{figure}

\begin{figure}[htbp]
\centering
\includegraphics[width=9cm,clip]{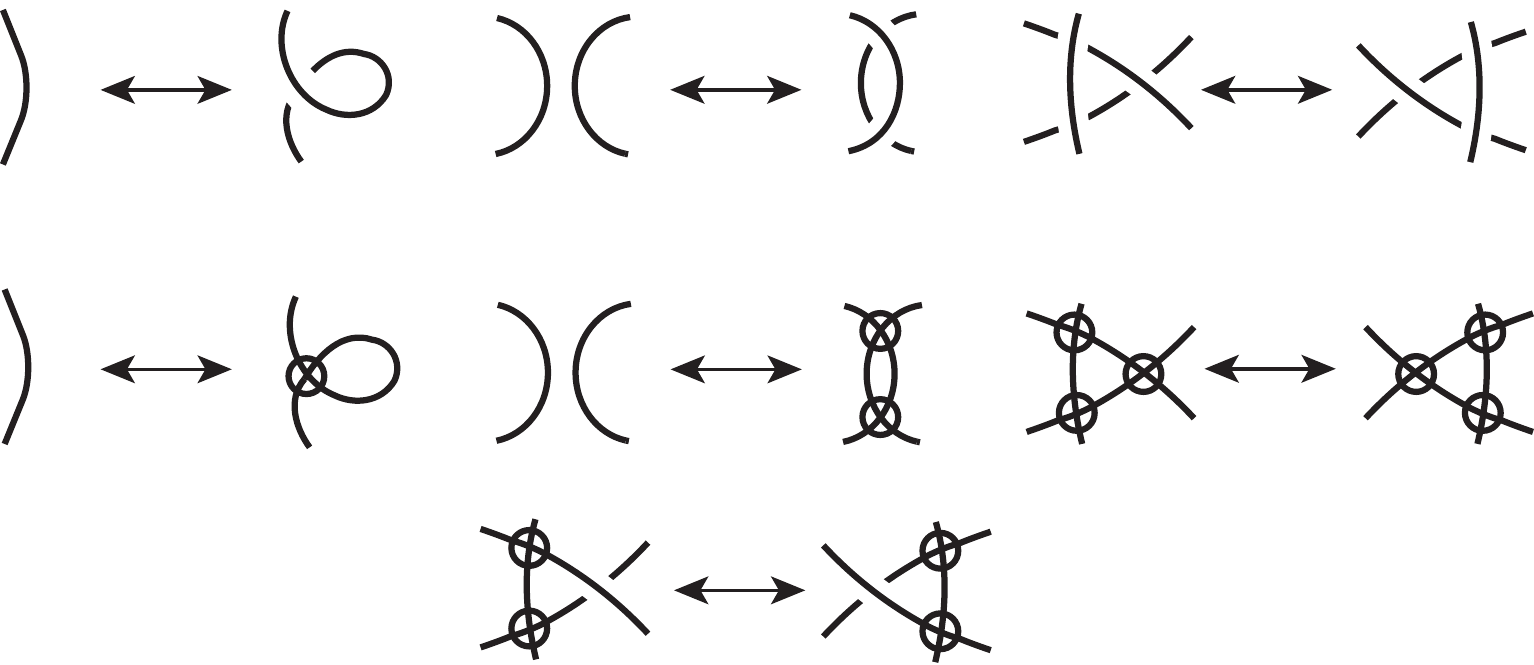}
\caption{Generalized Reidemeister moves.}
\label{fig:GRM}
\end{figure}

A virtual knot diagram (long virtual knot diagram, resp.) is regarded as the image of an immersion from $\mathbb S^1$ ($\mathbb R$, resp.) into $\mathbb R^2$. Let $K$ be a virtual knot diagram (long virtual knot diagram, resp.).  A {\em Gauss diagram} for $K$ is the preimage of $K$ with chords, each of which connects the preimages of each real crossing.  We specify over/under information of each real crossing on the corresponding chord by directing the chord toward the under path and decorating each chord with the sign of the crossing (Fig.~\ref{fig:sign}).

It is well-known that there exists a bijection from the set of virtual knots (long virtual knots, resp.) to the set of equivalence classes of their Gauss diagrams modulo the \emph{generalized Reidemeister moves of Gauss diagrams} in Fig.~\ref{fig:D_GRM} (Fig.~\ref{fig:D_GRM_long}, resp.).  In this paper, we identify a virtual knot (long virtual knot, resp.) with an equivalence class of Gauss diagrams, and we freely use either one of them depending on situations.

\begin{figure}[htbp]
\centering
\includegraphics[width=3cm,clip]{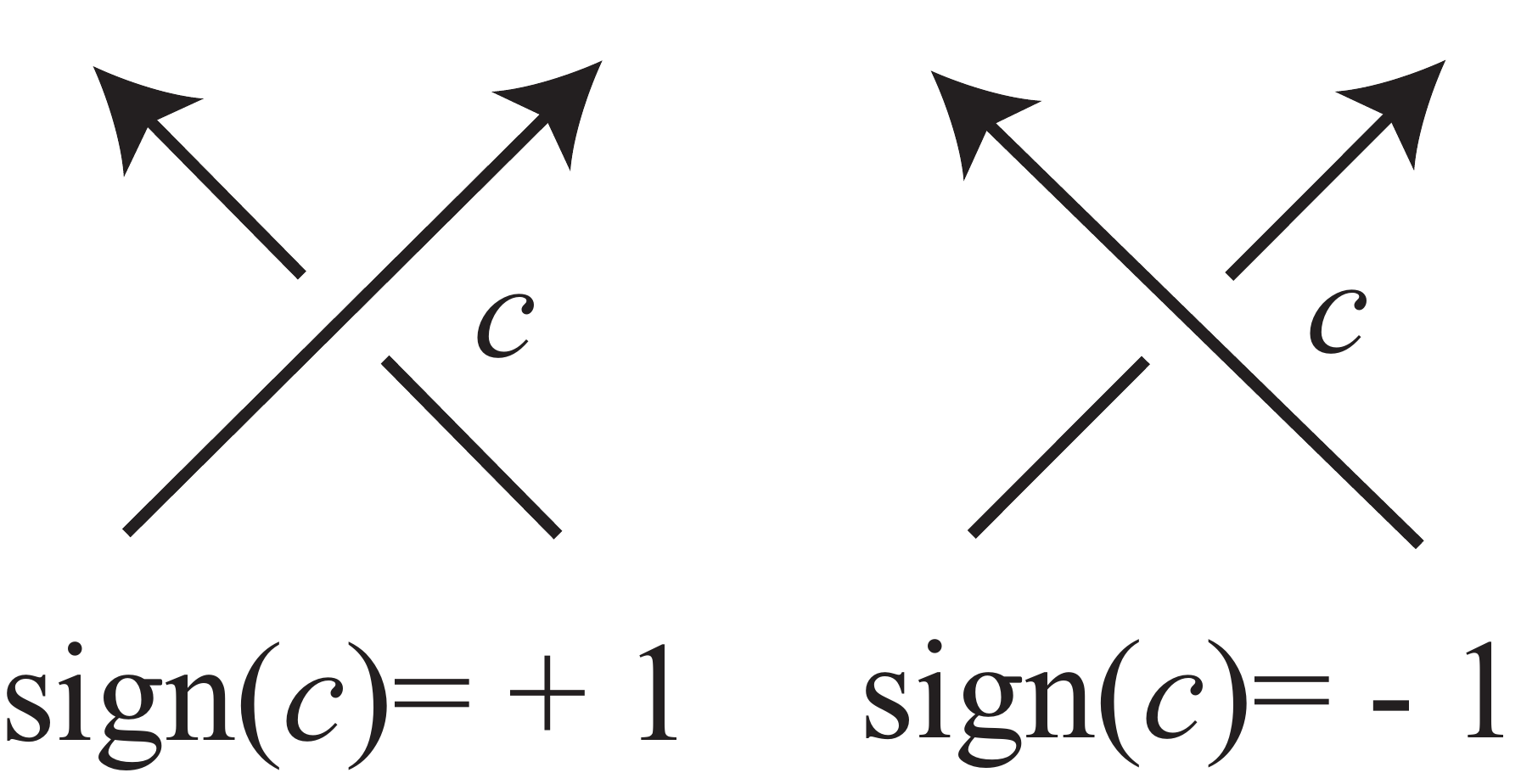}
\caption{The sign of a real crossing.}
\label{fig:sign}
\end{figure}

\begin{figure}[htbp]
\centering
\includegraphics[width=6cm,clip]{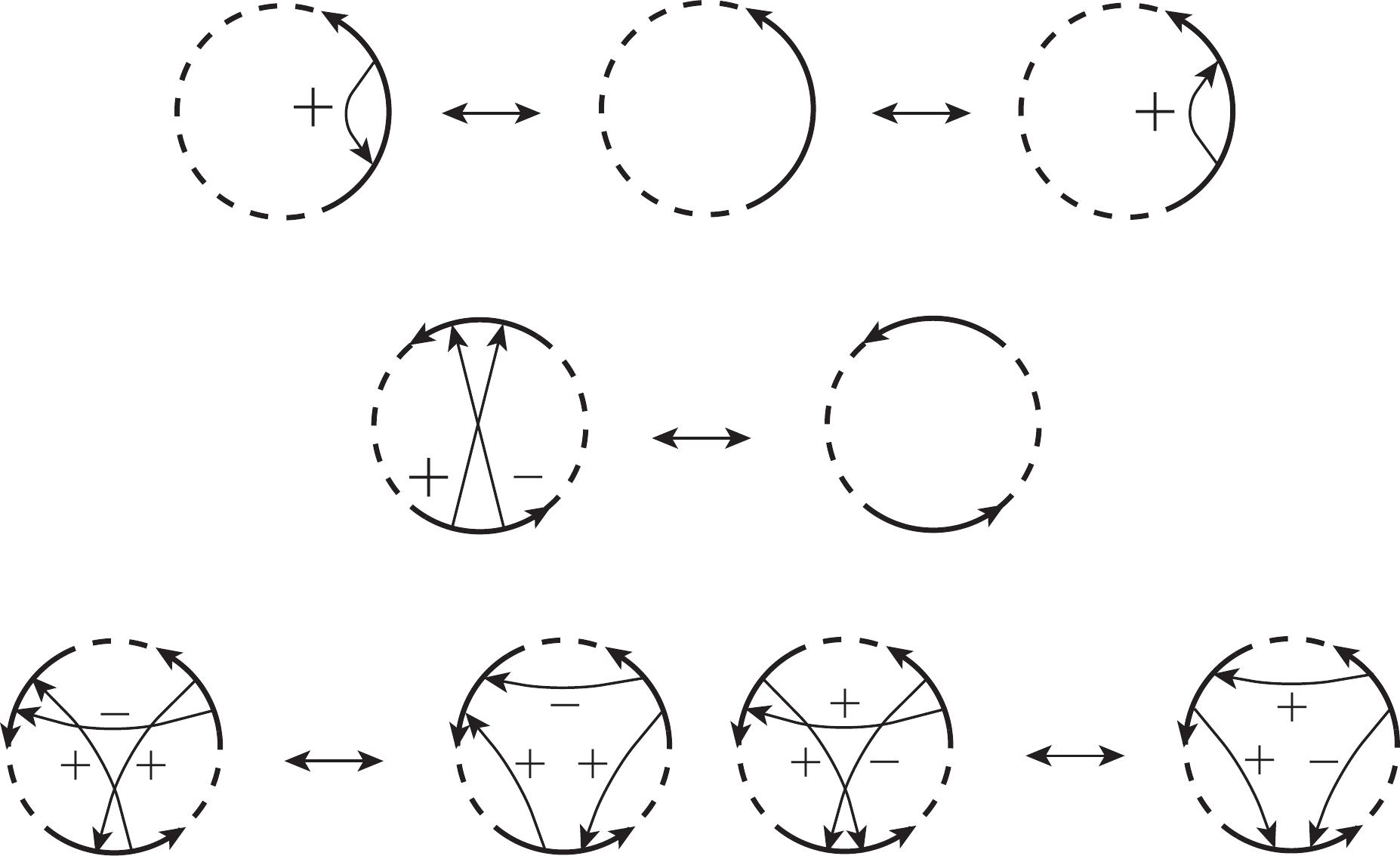}
\caption{Generalized Reidemeister moves of Gauss diagrams for virtual knots.}
\label{fig:D_GRM}
\end{figure}
\begin{figure}[htbp]
\centering
\includegraphics[width=10cm, clip]{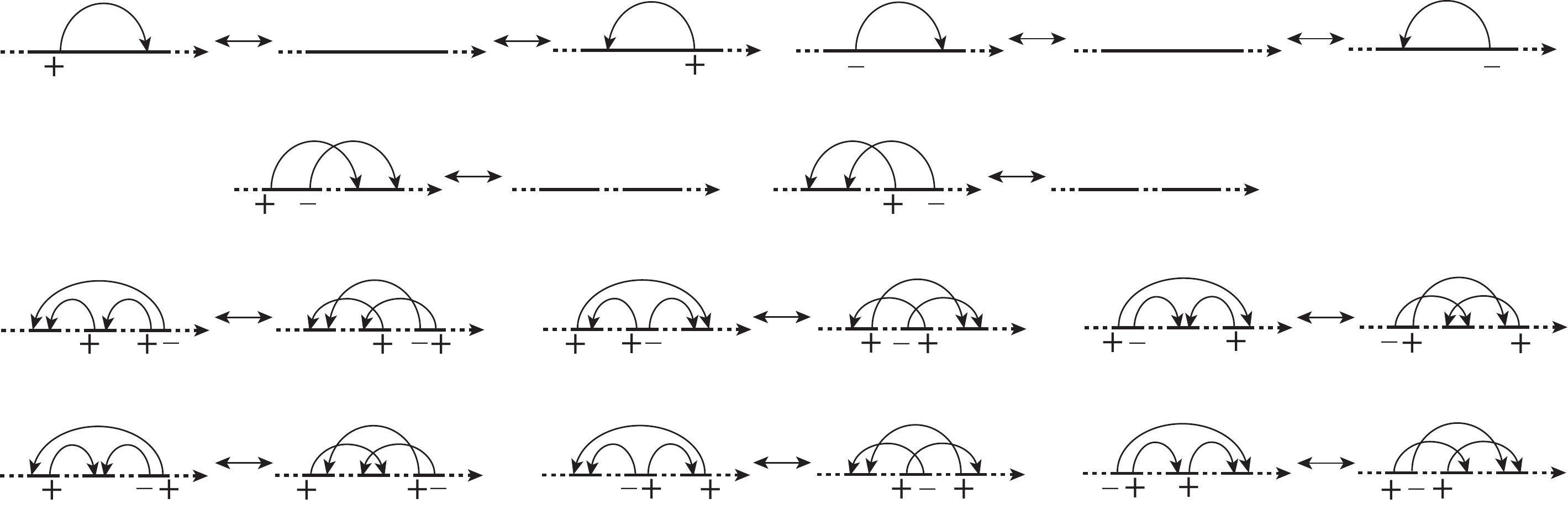}
\caption{Generalized Reidemeister moves of Gauss diagrams for long virtual knots.}\label{fig:D_GRM_long}
\end{figure}

An \emph{arrow diagram} is just a Gauss diagram with all chords drawn dashed. Let $\mathcal A$ be the set of all arrow diagrams, and $\mathcal D$ the set of all Gauss diagrams. A \emph{subdiagram} of $D \in \mathcal D$ is a Gauss diagram consisting of a subset of the chords of $D$. Define a map $i : \mathcal D \rightarrow \mathbb{Z} \mathcal A$ by the map that makes all the chords of a Gauss diagram dashed, and define a map $I:{\mathcal D} \rightarrow {\mathbb{Z} \mathcal A}$ by
\[
I\left( D\right) = \sum_{D' \subset D}i(D'),
\]
where the sum is over all subdiagrams of $D$. Extend the map $I$ to $\mathbb{Z} \mathcal D$ linearly. On the generators of $\mathbb{Z} \mathcal A$, define $( D, E )$ to be $1$ if $D = E$ and $0$ otherwise, and then extend $( \cdot , \cdot )$ bilinearly. Put
\begin{equation}\label{eq1}
	\langle  A,D\rangle =\left( A,I\left( D\right) \right),
\end{equation}
for any $D \in \mathcal D$ and $A \in \mathbb{Z} \mathcal A$ (for an example, see Examples~\ref{pr:GPV} and \ref{example2}).  

A \emph{trivial knot diagram} is a virtual knot diagram (long virtual knot diagram, resp.) with no double points.   Let $K$ be a virtual knot whose diagram is $D$.  Suppose that a virtual knot diagram $D$ and a trivial knot diagram are equivalent.  Then $K$ is called a \emph{trivial knot} or an \emph{unknot}.  Let $D$ be a long knot diagram or a knot diagram.  Then, a \emph{local move} is a replacement of a sufficiently small disk $d (\subset {\mathbb{R}}^2)$ on $D (\subset {\mathbb{R}^2})$ by another disk $d' (\subset {\mathbb{R}^2})$ such that $\partial d$ $=$ $\partial d'$ and $(\mathbb{R}^2 \setminus d) \cup d'$ gives a diagram.   
For a virtual knot diagram (long virtual knot diagram, resp.), the type of local move shown in Fig.~\ref{fig:virtualization} is called {\em virtualization} and each type of local moves shown in Fig.~\ref{fig:forbidden} is called {\em forbidden moves}. 
Let $M$ be a type of local moves. If any virtual knot diagram (long virtual knot diagram, resp.) can be transformed into a trivial knot diagram by a finite sequence of local moves of type $M$ and generalized Reidemeister moves, moves of type $M$ are called {\it unknotting operations} for virtual knots (long virtual knots, resp.). 

\begin{figure}[htbp]
\centering
\includegraphics[width=4cm,clip]{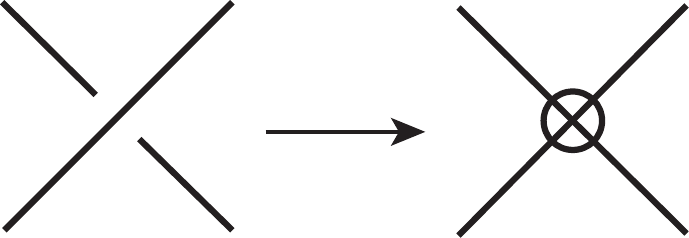}
\caption{Virtualization.}
\label{fig:virtualization}
\end{figure}

\begin{figure}[htbp]
\centering
\includegraphics[width=9.5cm]{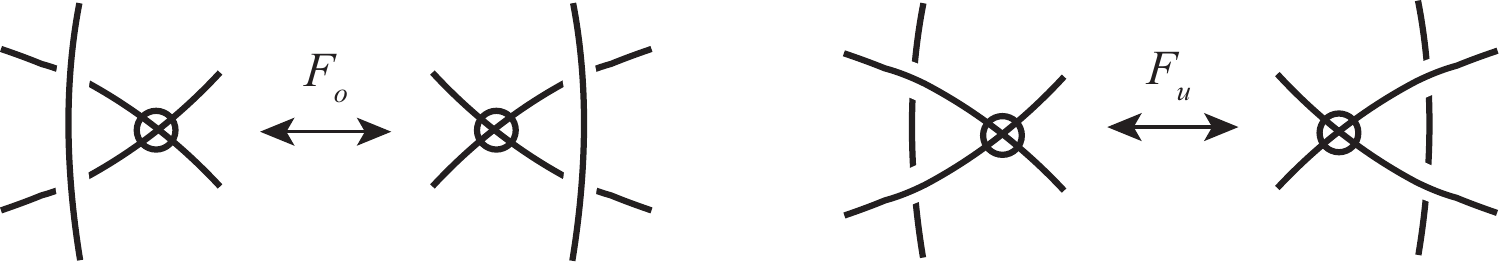}
\caption{Forbidden moves consisting of type $F_o$ and type $F_u$.}
\label{fig:forbidden}
\end{figure}

\begin{fact}[Goussarov, Polyak, and Viro \cite{GPV}]
Moves of virtualization are unknotting operations for virtual knots and for long virtual knots.
\end{fact}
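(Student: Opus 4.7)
My plan is to reduce the statement to the observation that a single virtualization, read on Gauss diagrams, is precisely the deletion of one chord. Once this local fact is in hand, the global statement is immediate: given any (long) virtual knot diagram $D$ with real crossings $c_1,\dots,c_k$, perform a virtualization at each $c_i$; the resulting diagram then has an empty Gauss diagram, i.e.\ represents the trivial (long) virtual knot. Since the list $c_1,\dots,c_k$ is finite, only finitely many virtualizations are used.

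The first substantive step is to set up the Gauss-diagram bookkeeping. Because virtual knots (and long virtual knots) are identified in this paper with equivalence classes of Gauss diagrams under the generalized Reidemeister moves of Figures~\ref{fig:D_GRM} and \ref{fig:D_GRM_long}, virtual crossings in a planar picture carry no combinatorial data: any two strands may be pushed past any collection of virtual crossings via the detour move (a consequence of the virtual Reidemeister moves). I would state this explicitly and use it as the sole tool for analysing the move.

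The second step is the local analysis of one virtualization, comparing the two sides of Fig.~\ref{fig:virtualization}. On one side, a small disk contains a real crossing $c$; on the other side, the same two arcs enter and leave the disk, but they meet only in virtual crossings. The chords of the Gauss diagram coming from real crossings outside the disk are clearly unchanged, and by the detour principle their endpoints along the knot circle (or long line) also sit in the same cyclic/linear order as before. The only change is the disappearance of the chord coming from $c$. Hence virtualization at $c$ is chord deletion at $c$, and iterating over all real crossings yields a Gauss diagram with no chords, which is the trivial knot.

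The main obstacle is thus the local identification virtualization $\leftrightarrow$ chord deletion; everything else is a one-line iteration. This obstacle is a purely combinatorial check inside one disk plus an application of the detour move, and the long-knot case is identical after replacing the ambient circle by an interval (Fig.~\ref{fig:D_GRM_long} providing the appropriate generalized Reidemeister moves).
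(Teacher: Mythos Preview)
Your argument is correct: on Gauss diagrams a virtualization at a real crossing is exactly the deletion of the corresponding chord, and iterating over all real crossings yields the chordless Gauss diagram, i.e.\ the trivial (long) virtual knot. Note, however, that the paper does not supply its own proof of this statement; it records it as a Fact attributed to Goussarov--Polyak--Viro~\cite{GPV}. Your proof is essentially the standard one given there, so there is nothing further to compare.
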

\begin{fact}[Kanenobu \cite{kanenobu1}, Nelson \cite{nelson}]\label{fact2}
Forbidden moves are unknotting operations for virtual knots and for long virtual knots.  
\end{fact}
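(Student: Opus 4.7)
The plan is to pass to the Gauss-diagram model and show that the combined effect of the two forbidden moves (together with the generalized Reidemeister moves of Gauss diagrams) is strong enough to collapse any Gauss diagram to the empty one, which represents the trivial knot.

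First I would translate each forbidden move into its effect on the underlying Gauss diagram. Because virtual crossings are not recorded as chords, a forbidden move, which is localised near one virtual crossing and one real crossing, shows up on the Gauss diagram only as a permutation of two neighbouring chord endpoints along the circle. A direct inspection of Fig.~\ref{fig:forbidden} together with the sign convention of Fig.~\ref{fig:sign} shows that a move of type $F_o$ transposes two adjacent endpoints when the active strand crosses over at the real crossing, while a move of type $F_u$ transposes them when the active strand crosses under. Taking $F_o$ and $F_u$ together, a short case analysis over the possible orientations and signs of the two chord endpoints involved confirms that every adjacent transposition of endpoints on the circle is realisable.

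Since the symmetric group on the chord endpoints is generated by adjacent transpositions, one may freely permute the endpoints along the circle. In particular, given any Gauss diagram $D$ and any chord $c$ of $D$, a finite sequence of forbidden moves brings the two endpoints of $c$ into adjacent positions with no other endpoint between them; the chord $c$ is then a kink and is removed by a Reidemeister~I move on Gauss diagrams. Iterating this reduction on the remaining chords transforms $D$ into the empty Gauss diagram. For long virtual knots the same argument goes through verbatim with the circle replaced by an oriented interval: adjacent transpositions still generate every permutation, and the long-knot Reidemeister~I move of Fig.~\ref{fig:D_GRM_long} still removes isolated chords.

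The main obstacle is the bookkeeping in the first step. One has to verify, for every combination of signs and of over/under decorations of the two adjacent chord endpoints, that at least one of $F_o$ and $F_u$ (possibly after a preparatory generalized Reidemeister move putting the local configuration into the shape of Fig.~\ref{fig:forbidden}) realises their transposition while leaving the remaining chord data of the Gauss diagram untouched. Checking that no extra chord endpoints are inadvertently shuffled in the process is the only genuinely delicate point; once this case analysis is complete, the rest of the reduction is a formal consequence of the fact that adjacent transpositions generate the full symmetric group on the endpoints.
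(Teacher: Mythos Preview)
The paper does not supply its own proof of this statement: Fact~\ref{fact2} is simply recorded with citations to Kanenobu and Nelson and is used as input elsewhere. So there is no in-paper argument to compare against; your proposal is being measured against the cited literature rather than against anything the authors wrote.

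Your overall strategy is essentially Nelson's: work in the Gauss-diagram model, interpret the forbidden moves as local transpositions of adjacent chord endpoints, use these to isolate a chord, delete it by a Reidemeister~I move, and iterate. That is the right plan and it does prove the fact.

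One concrete inaccuracy should be fixed. You describe a forbidden move as ``localised near one virtual crossing and one real crossing.'' In fact each forbidden move $F_o$, $F_u$ is a triangle move involving one virtual crossing and \emph{two} real crossings; on the Gauss diagram it therefore involves two chords, and its effect is to interchange two specified adjacent endpoints of those two chords (with signs preserved). This is exactly what makes your ``permutation of two neighbouring chord endpoints'' possible, so your sentence is internally inconsistent as written. Relatedly, the case analysis you allude to is a bit more structured than you suggest: $F_o$ realises the transpositions in which the strand passing over at both real crossings supplies the moving endpoint, and $F_u$ those in which the under strand does; one then checks that between them (and their inverses) every head/head, tail/tail, and head/tail adjacency is covered. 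Once that table is written out, the rest of your reduction---adjacent transpositions generate the symmetric group, isolate and kill chords one at a time, same argument on an interval for long virtual knots---goes through exactly as you say.
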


\begin{definition}[triangle]\label{dfn_triangle}
Suppose that $D$ is a virtual knot diagram or a long virtual knot diagram ($\subset \mathbb{R}^2$).  For $D$, if there exists a disk ($\subset \mathbb{R}^2$) that look like one of Fig.~\ref{fig:triangle disks}, the disk is called a {\it triangle}. Signs of triangles are defined as shown in Fig.~\ref{fig:signs of triangle disks}.  When we would like to specify the sign of a triangle, the triangle is denoted by an $\epsilon$-triangle ($\epsilon = +, -$).   
A triangle with the opposite sign to $\epsilon$ is called a $(-\epsilon)$-triangle. 

\begin{figure}[htbp]
\centering
\includegraphics[width=11cm]{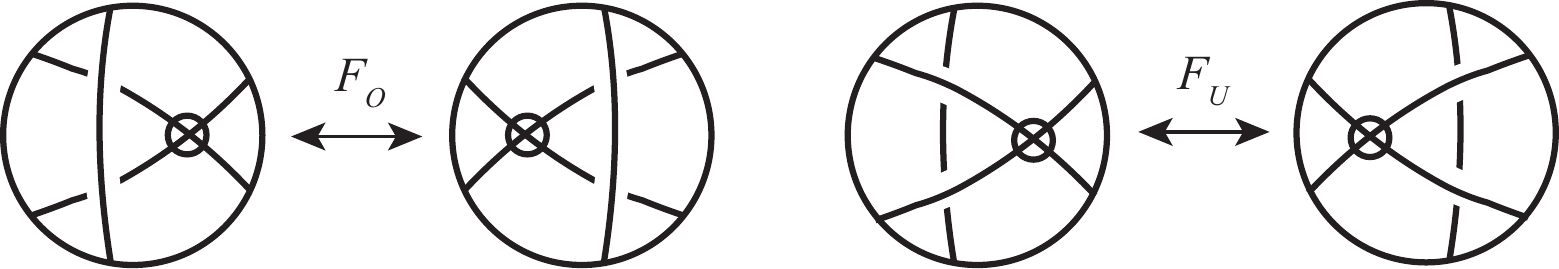}
\caption{Triangles (four types).}
\label{fig:triangle disks}
\end{figure}

\begin{figure}[htbp]
\centering
\includegraphics[width=12cm,clip]{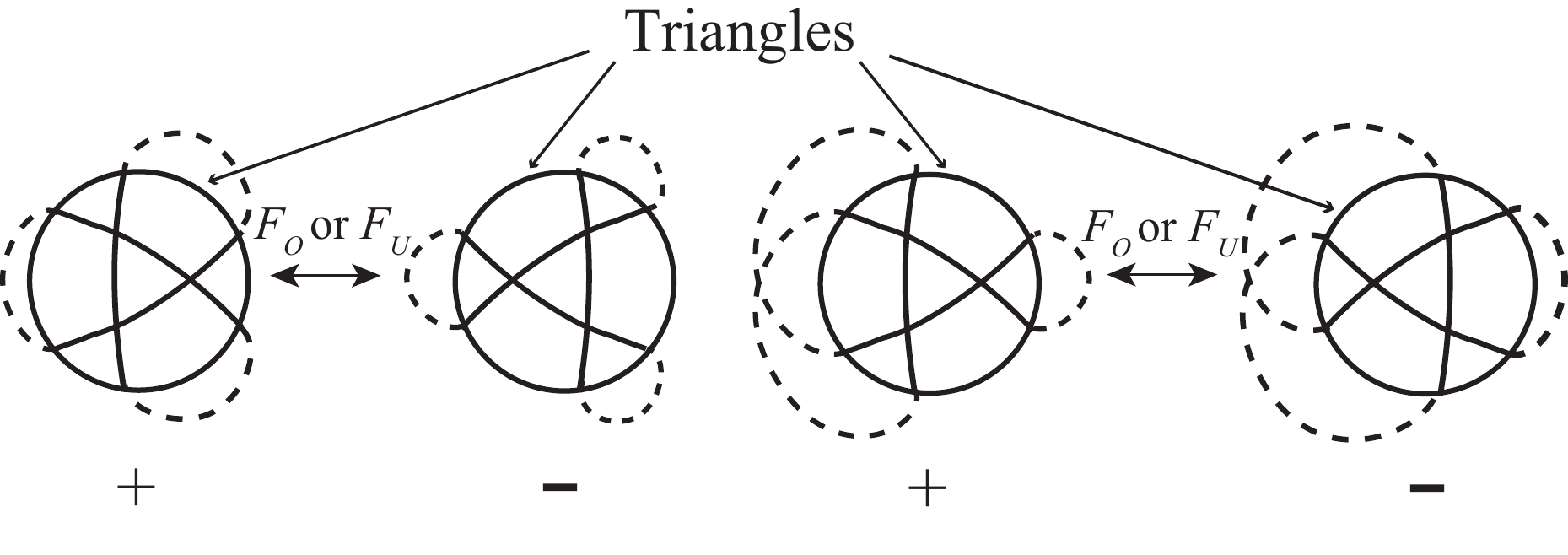}
\caption{Signs of triangles.  Dotted curves indicate the connections of virtual knot diagrams or long virtual knot diagrams.  For long virtual knot diagrams, the infinity point is on a dotted curve.}
\label{fig:signs of triangle disks}
\end{figure}

\end{definition}

\begin{definition}[Finite type invariants of GPV-order]
Let $\mathcal{VK}$ be the set of the virtual knots (long virtual knots, resp.) 
and $G$ an abelian group.  Let $v$ be a function from $\mathcal{VK}$ to $G$.  Suppose that $v$ is an invariant of virtual knots (long virtual knots, resp.).    
We say that $v: {\mathcal{VK}} \rightarrow G$ is a \emph{finite-type invariant of GPV-order} $\leq n$ if for any virtual knot diagram (long virtual knot, resp.) $D$ and for any $n+1$ real crossings $d_1, d_2, \ldots, d_{n+1}$,
\[
\sum_{\delta} {(-1)^{|\delta |} v(D_{\delta })=0},
\]
where $\delta=(\delta_1, \delta_2, \ldots , \delta_{n+1})$ runs over $(n+1)$-tuples of 0 or 1, $|\delta |$ is the number of $1$'s in $\delta$, and
$D_\delta$ is a diagram obtained from $D$ by switching every $d_i$ with $\delta_i=1$ to the virtual crossing.    

If $v$ is a finite type invariant of GPV-order $\leq n$ and is not a finite type invariant of GPV-order $\leq {n-1}$, we say that $v$ \emph{is a finite type invariant of GPV-order} $n$, and $v$ is denoted by $v^{\gpv}_n$.  
\end{definition}  
\begin{definition}[Finite type invariants of F-order]\label{f-order}
Let $\mathcal{VK}$ be the set of the virtual knots (long virtual knots, resp.) 
and $G$ an abelian group.  
Let $v$ be a function from $\mathcal{VK}$ to $G$.  Suppose that $v$ is an invariant of virtual knots (long virtual knots, resp.).  
We say that $v :$ ${\mathcal{VK}} \rightarrow G$ is a \emph{finite-type invariant for forbidden moves of order} $\leq n$ if for any virtual knot diagram (long virtual knot diagram, resp.) $D$ and for any $n+1$ disjoint triangles, each of which is $\epsilon_i$-triangle $d_i$ ($1 \le i \le n+1$, $\epsilon_i = -1$ or $1$),
\[
\sum_{\delta} {(-1)^{|\delta |} v(D_{\delta })=0},
\]
where $\delta=(\delta_1, \delta_2, \ldots , \delta_{n+1})$ runs over $(n+1)$-tuples of 0 or 1, $|\delta |$ is the number of $1$'s in $\delta$, and
$D_\delta$ is a diagram obtained from $D$ by switching every $\epsilon_i$-triangle $d_i$ with $\delta_i=1$ to the $(-\epsilon_i)$-triangle which is obtained from $d_i$ by a single forbidden move.

In this paper, a finite-type invariant for forbidden moves of order $\leq n$ is simply called a \emph{finite-type invariant of F-order} $\leq n$.  
If $v$ is a finite type invariant of F-order $\leq n$ and is not a finite type invariant of F-order $\leq {n-1}$, we say that $v$ \emph{is a finite type invariant of F-order} $n$, and $v$ is denoted by $v^F_n$.    
\end{definition}
\begin{example}[cf.~Goussarov-Polyak-Viro~\cite{GPV}, Polyak-Viro~\cite{PV}]
\label{pr:GPV} 
Let $v_{2, 1}$ and $v_{2, 2}$ be finite type invariants of GPV-order $2$ of long virtual knots.  
Recall that the definition $\langle \cdot, \cdot \rangle$ by (\ref{eq1}).  
By Goussarov, Polyak, and Viro \cite{GPV}, these are obtained by
\[
v_{2,1}(\cdot)=\biggl\langle \sum_{\varepsilon_1,\varepsilon_2}\varepsilon_1\varepsilon_2 \parbox{50pt}{\includegraphics[width=50pt]{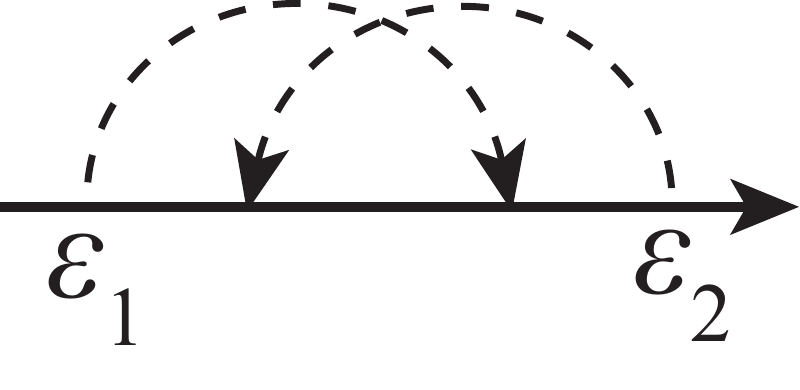}},\cdot \biggr\rangle \mbox{ and }
v_{2,2}(\cdot)=\biggl\langle \sum_{\varepsilon_1,\varepsilon_2}\varepsilon_1\varepsilon_2 \parbox{50pt}{\includegraphics[width=50pt]{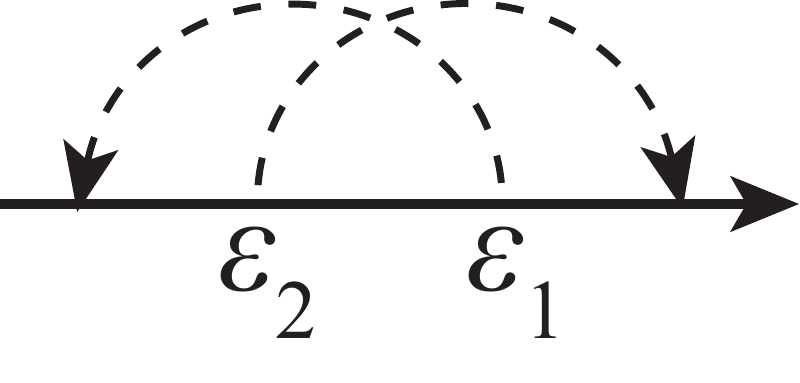}},\cdot \biggr\rangle .
\]    
It is known that for any classical knot $K$, $v_{2, 1} (K)$ $=$ $v_{2, 2} (K)$ and both $v_{2, 1}$ and $v_{2, 2}$ correspond to the second coefficient of Conway polynomial where $v_{2, 1} ({\textrm{unknot}})$ $=$ $0$ and $v_{2, 1} ({\textrm{trefoil}})$ $=$ $1$ (see Polyak-Viro~\cite{PV}).  It is clear that both $v_{2, 1}$ and $v_{2, 2}$ are nontrivial knot invariants, which implies that $v_{2, 1}$ ($v_{2, 2}$, resp.) is not a finite type invariant of F-order $0$.   
Note that every finite type invariant of F-order $0$ is a constant map (traditionally, such an invariant is called a \emph{trivial invariant}) by Fact~\ref{fact2}.   
In the following, we show that $v_{2, 1}$ ($v_{2, 2}$, resp.) is a finite type invariant of F-order $1$.  It is sufficient to show that $v_{2, 1}$ ($v_{2, 2}$, resp.) is a finite type invariant of F-order $\le 1$.  

A single forbidden move is presented by Gauss diagrams up to signs of crossings as in Fig.~\ref{FigA}.  
\begin{figure}[htbp]
\includegraphics[width=8cm]{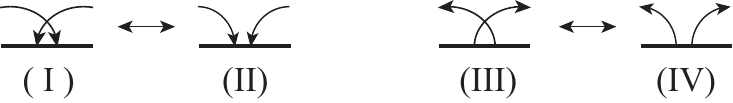}
\caption{Gauss diagrams and a single forbidden move.}\label{FigA}
\end{figure}
Recall that the definition of $D_\delta$ in Definition~\ref{f-order}.    
Suppose that a long virtual knot diagram $D$ has two disjoint triangles.  Let $D_{(0, 0)}$ $=$ $D$ and let $\delta$ $=$ $(\delta_1, \delta_2)$.  Let $J$ $=$ $\sum_{\delta} (-1)^{|\delta|} v_{2, 1} (D_{\delta})$.  
By definition, $D_{\delta}$ with $|\delta|$ $=$ $1$ ($D_{(1, 1)}$, resp.) is obtained from $D_{(0, 0)}$ by a single forbidden move (exactly two forbidden moves, resp.).   
If there exists $D_{\delta}$ such that type~(I) in Fig.~\ref{FigA} does not appears twice in $D_{\delta}$, it is easy to see $J$ $=$ $0$.  Thus, without loss of generality, we can suppose that $D_{(\delta_1, \delta_2)}$ is as in Fig.~\ref{FigB}.  
\begin{figure}[htbp]
\includegraphics[width=8cm]{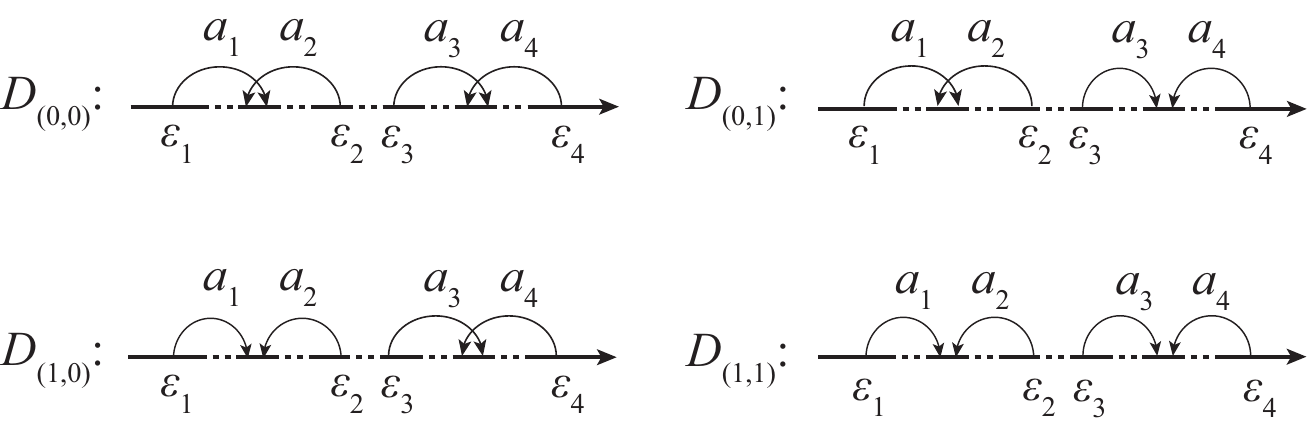}
\caption{$D_{(0, 0)}$, $D_{(0, 1)}$, $D_{(1, 0)}$, and $D_{(1, 1)}$.  $a_i$ ($1 \le i \le 4$) denotes an oriented chord and $\epsilon_i$ ($1 \le i \le 4$) denotes the sign of $a_i$.}\label{FigB}
\end{figure}
Here, for oriented chords $a_i$ ($1 \le i \le 4$) in Fig.~\ref{FigB}, it is clear that whether there is an intersection of $\alpha \in \{ a_1, a_2 \}$ with $\beta \in \{ a_3, a_4 \}$ or not does not affect the values of $J$ under the corresponding forbidden moves.   Then,  
\begin{align*}
\sum_{\delta} (-1)^{|\delta|} v_{2, 1} (D_{\delta}) &=
 v_{2, 1} (D_{(0, 0)}) - v_{2, 1} (D_{(0, 1)}) - v_{2, 1} (D_{(1, 0)}) + v_{2, 1} (D_{(1, 1)})\\
&= \left( v_{2, 1} (D_{(0, 0)}) - v_{2, 1} (D_{(0, 1)}) \right) - \left( v_{2, 1} (D_{(1, 0)}) - v_{2, 1} (D_{(1, 1)}) \right) \\
&= \epsilon_3 \epsilon_4 - \epsilon_3 \epsilon_4 \\
&= 0. 
\end{align*}
For $v_{2, 2}$, since the arguments are essentially the same as that of $v_{2, 1}$, the detail of the proof of $\sum_{\delta} (-1)^{|\delta|} v_{2, 2} (D_{\delta})$ $=$ $0$ is omitted (focus on (III) of Fig.~\ref{FigA}).  
\end{example}
\begin{example}\label{example2}
Let $v_{3, 1}$ be a finite type invariant of GPV-order $3$ of virtual knots.  
By Goussarov, Polyak, and Viro \cite{GPV}, this is obtained by
\begin{align*}
v_{3, 1}(\cdot)=&
\biggl\langle \sum_{\varepsilon_1,\varepsilon_2,\varepsilon_3}\varepsilon_1\varepsilon_2\varepsilon_3\Bigl( 3\parbox{26pt}{\includegraphics[width=26pt]{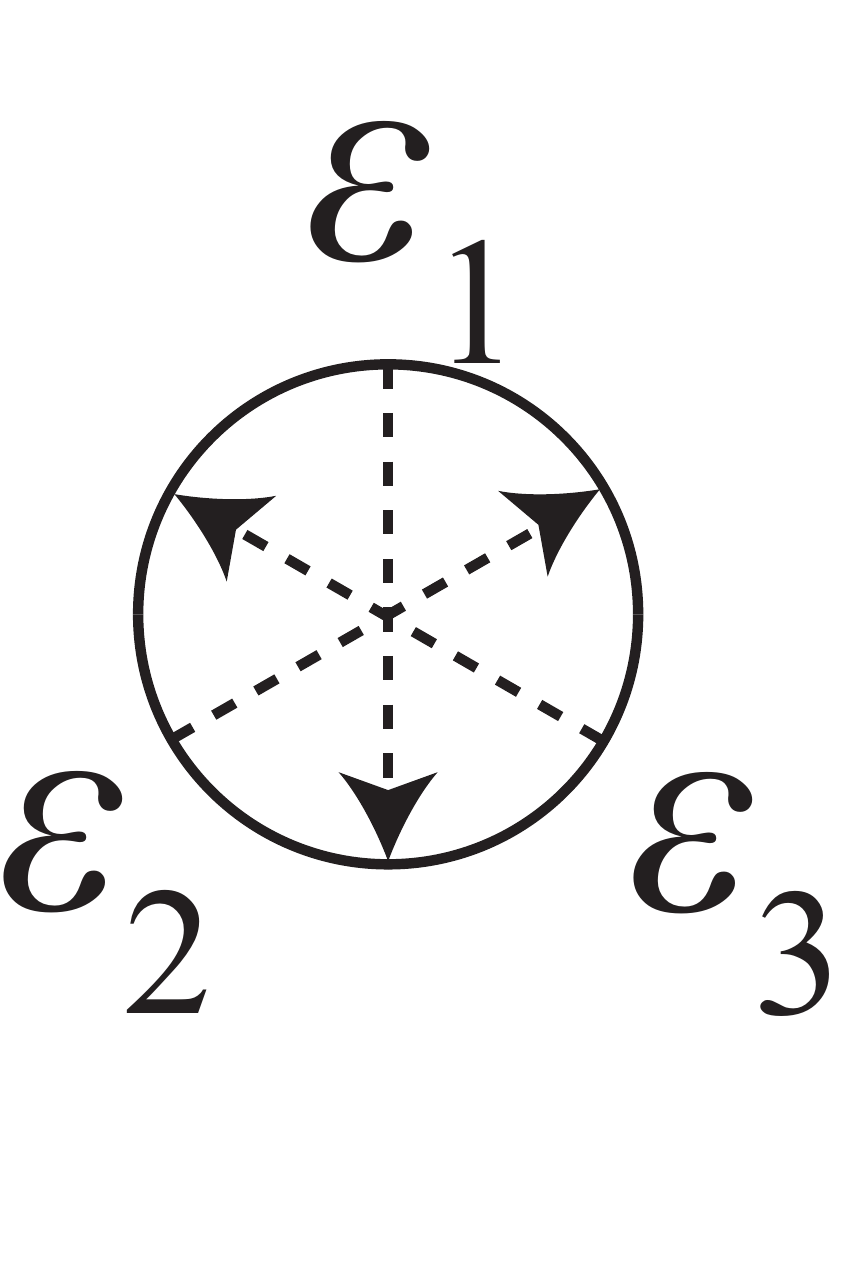}}-\parbox{26pt}{\includegraphics[width=26pt]{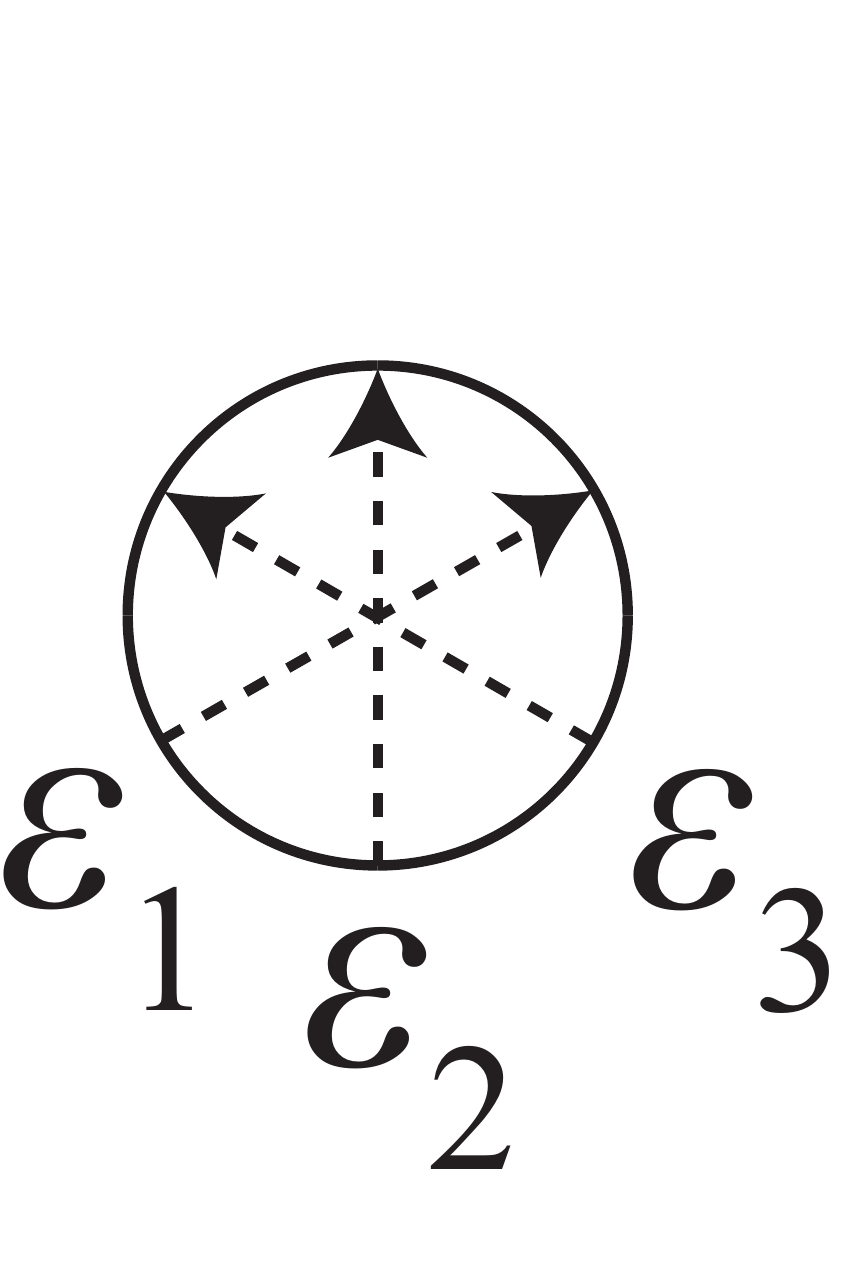}}+\parbox{26pt}{\includegraphics[width=26pt]{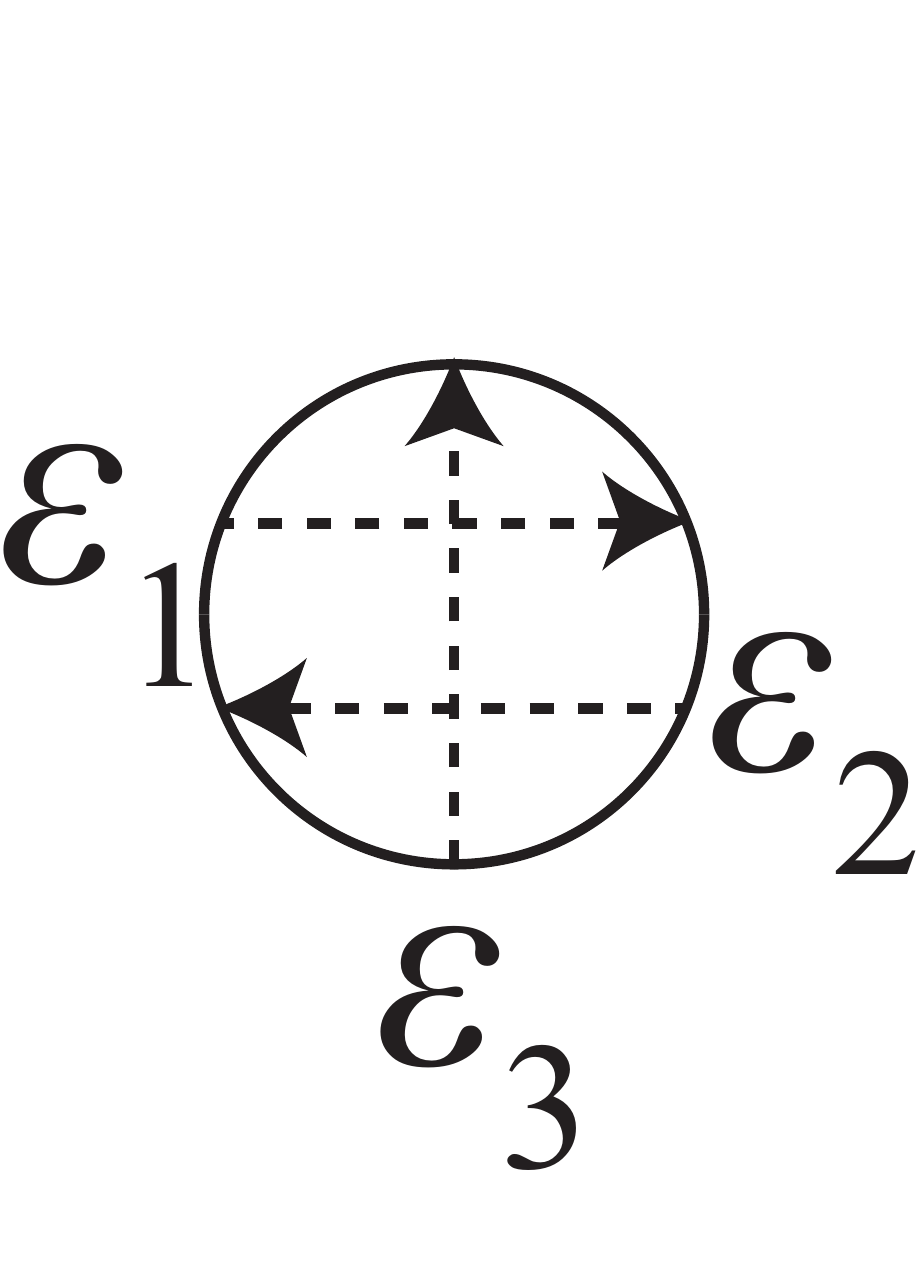}}+\parbox{26pt}{\includegraphics[width=26pt]{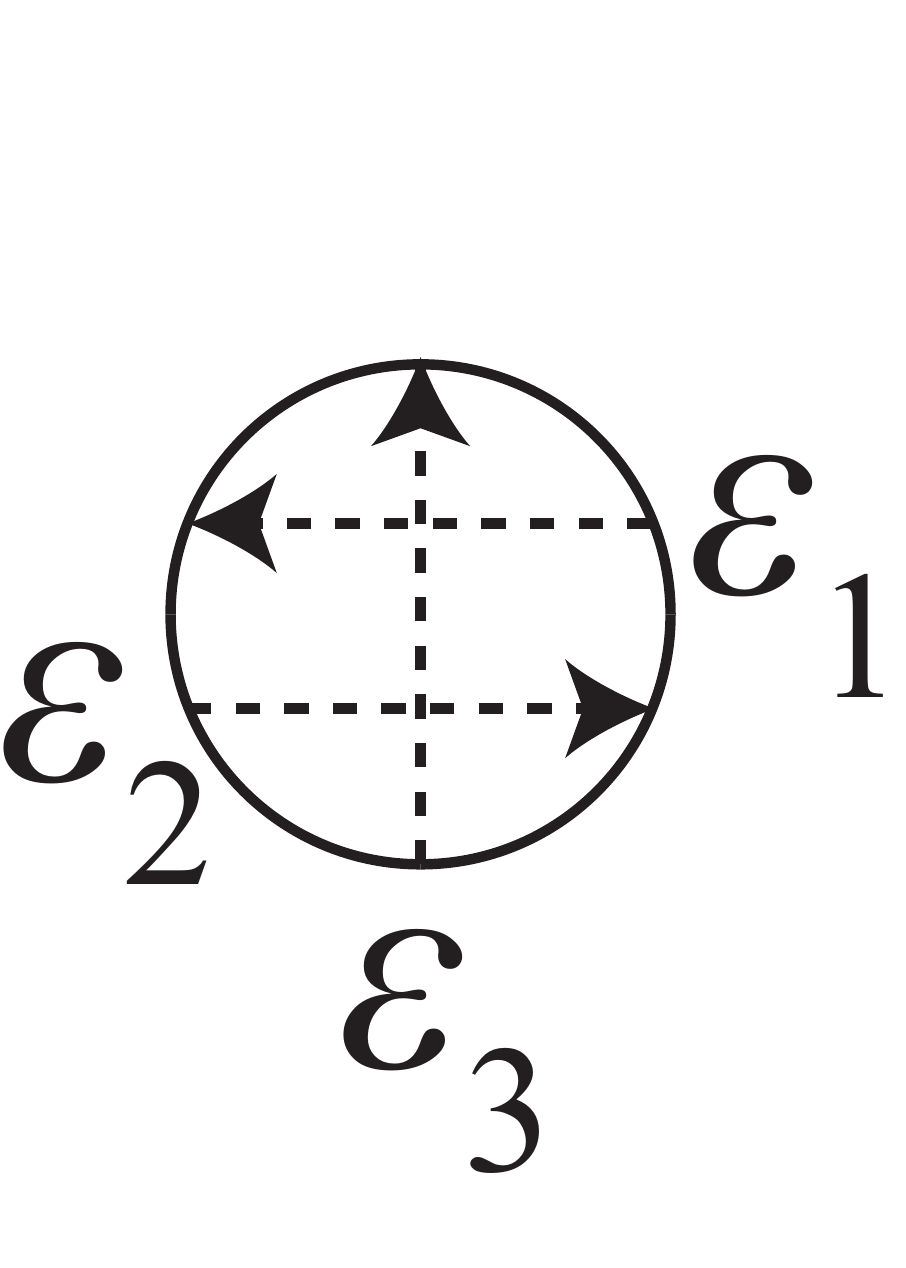}}-\parbox{26pt}{\includegraphics[width=26pt]{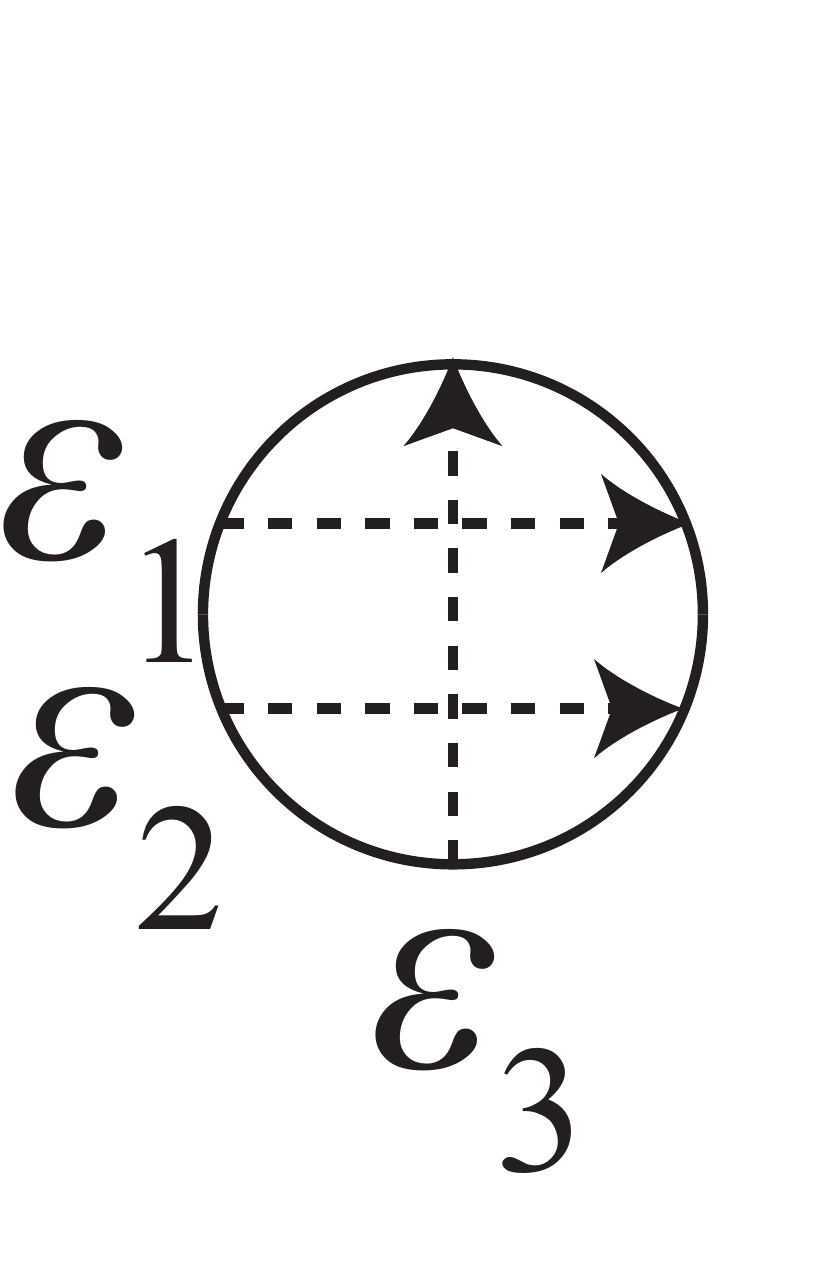}}-\parbox{26pt}{\includegraphics[width=26pt]{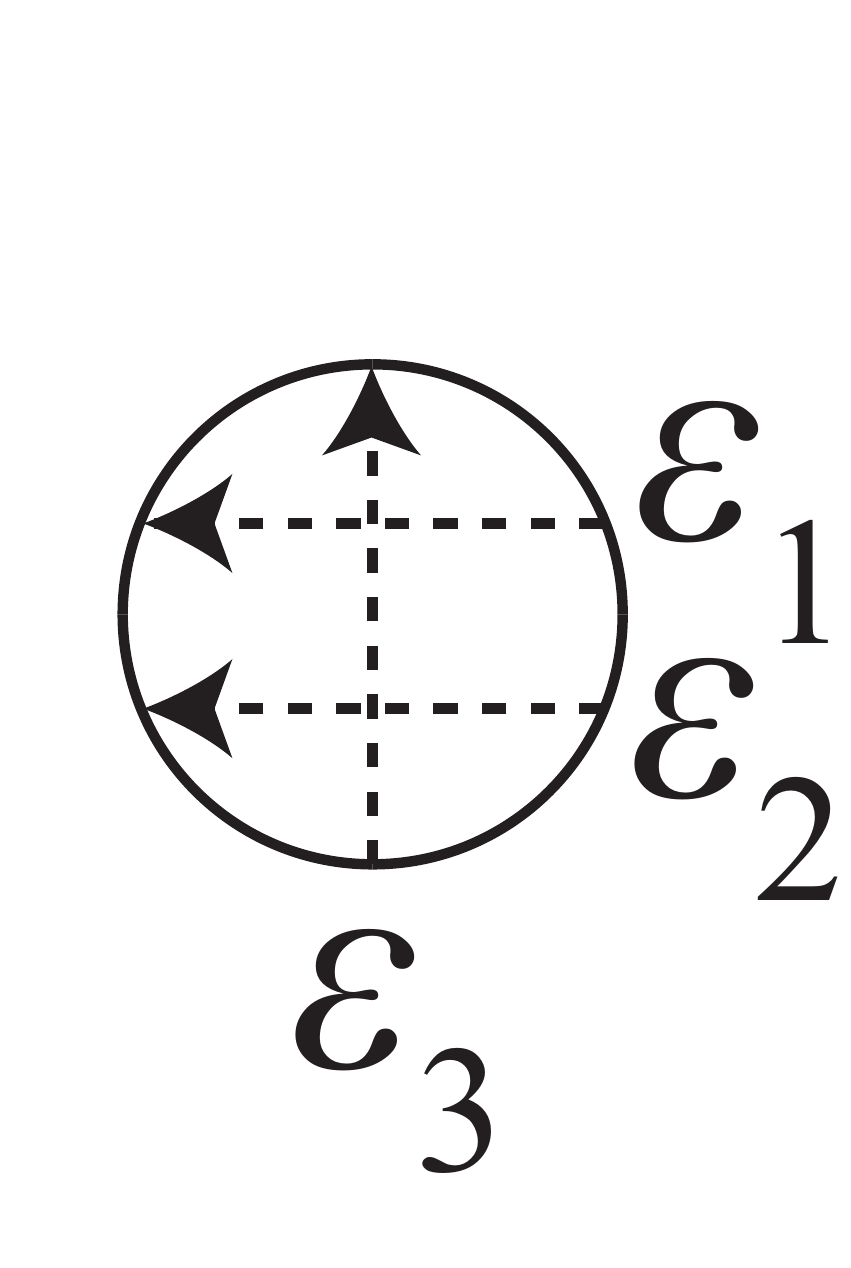}}\Bigr)\\
&-\parbox{23pt}{\includegraphics[width=23pt]{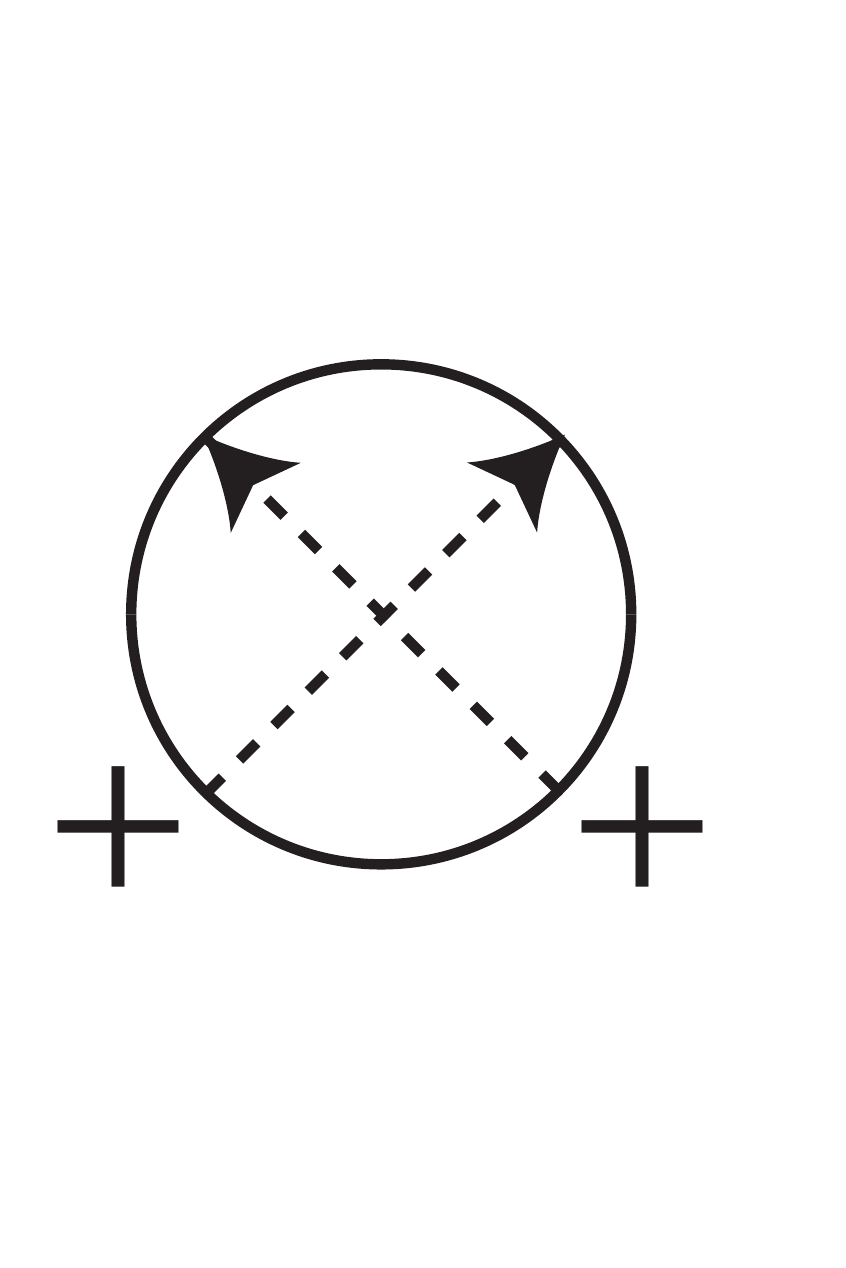}}+\parbox{23pt}{\includegraphics[width=23pt]{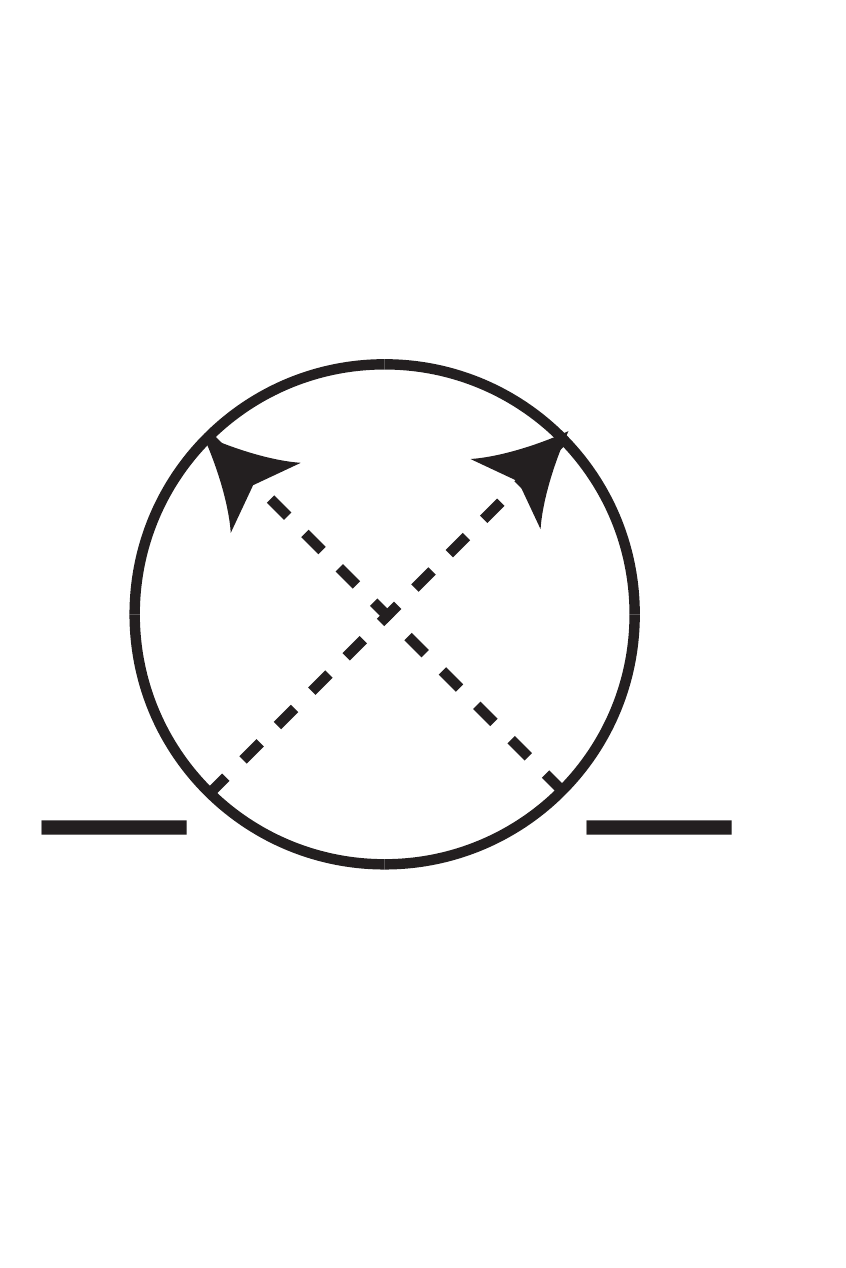}},\cdot \biggr\rangle,
\end{align*}
where $\varepsilon_i = \pm 1$ $(i=1, 2, 3)$.

Let $D_+$ be a virtual knot diagram containing a $+$-triangle.  Let $D_-$ be a virtual knot diagram containing $-$\,-triangle is obtained by a single forbidden move.  Formulas of the difference $v_{3, 1} (D_+)$ $-$ $v_{3, 1} (D_-)$ are obtained by \cite[Lemma~3.5]{Sakurai}, and using the formulae, it is elementary to show that $v_{3, 1}$ is a finite type invariant of F-order $\le 1$ in the same way as Example~\ref{pr:GPV}.  Details of this proof are left to the reader.  
Note that $v_{3, 1}$ is a nontrivial virtual knot invariant, which implies that $v_{3, 1}$ is not a finite type invariant of F-order $0$.     
Therefore, $v_3 (K)$ is a finite type invariant of F-order $1$.  
\end{example}
\begin{definition}[$GPV_n$-similar, $GPV_n$-trivial]
Let $K$ ($L$, resp.) be a virtual knot or a long virtual knot.  Let
$\widetilde{K}$ ($\widetilde{L}$, resp.) be a diagram of $K$ ($L$, resp.).  
Suppose that $A_1$, $A_2$, $\ldots$, $A_n$ are non-empty sets of crossings $\widetilde{K}$ or $A_1$, $A_2$, $\ldots$, $A_n$ are non-empty sets of crossings $\widetilde{L}$.  
Then, $K$ and $L$ are \emph{$GPV_n$-similar} 
if there exist $A_i$ ($1 \leq i \leq n$) such that 
\begin{itemize}
\item $A_i \cap A_j = \emptyset$ $(i \neq j)$, 
\item By replacing real to virtual at every crossing of any nonempty subfamily of $\{A_i~|~1\leq i \leq n\}$, $\widetilde{L}$ is obtained from $\widetilde{K}$ or $\widetilde{K}$ is obtained from $\widetilde{L}$.  
\end{itemize}
In particular, if a virtual knot $K$ and a trivial knot is $GPV_n$-similar, $K$ is \emph{$GPV_n$-trivial}.   
\end{definition}
\begin{definition}[$F_n$-similar, $F_n$-trivial]
Let $K$, $L$ be virtual knots and $\widetilde{K}$ ($\widetilde{L}$, resp.) a diagram of $K$ ($L$, resp.).  
Let $A_1$, $A_2$, $\ldots$, $A_n$ be non-empty sets of disjoint triangles in $\widetilde{K}$.  
Then, $K$ and $L$ are \emph{$F_n$-similar} if there exist $A_i$ ($1 \leq i \leq n$) such that  
\begin{itemize}
\item $A_i \cap A_j = \emptyset$ $(i \neq j)$, 
\item $\widetilde{L}$ is obtained from $\widetilde{K}$ by forbidden moves at the triangles in any nonempty subfamily of $\{A_i~|~1\leq i \leq n\}$.  
\end{itemize}
In particular, if a virtual knot $K$ and a trivial knot is $F_n$-similar, $K$ is \emph{$F_n$-trivial}.  
\end{definition}
A virtual knot that is $GPV_n$-trivial ($F_n$-trivial, resp.) is given by Lemma~\ref{lemma2} (Lemma~\ref{lemma4}, resp.).  
\section{Main Results}
\begin{theorem}\label{thmGPV} 
For any classical knot $K$, any positive integers $l$, $m$, and $n$ $(m \le n-1)$, and any finite type invariant $v^{\gpv}_m$ of GPV-order $m$, there exist infinitely many of classical knots $K^{\ell}_n$ such that $v^{\gpv}_m(K^{\ell}_n)=v^{\gpv}_m(K)$.
\end{theorem}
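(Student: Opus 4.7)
The plan is to exhibit $K^\ell_n$ as the connect sum of $K$ with $\ell$ copies of a fixed nontrivial $GPV_n$-trivial classical knot $J_n$. Two ingredients will then combine to prove the theorem: (a) every additional copy of $J_n$ is invisible to finite type invariants of GPV-order $\le n-1$, so that $v^{\gpv}_m(K^\ell_n) = v^{\gpv}_m(K)$ whenever $m\le n-1$, and (b) the classical knots $K^\ell_n$ are pairwise distinct because the Seifert genus is additive under connect sum.

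First, I would invoke Lemma~\ref{lemma2} to obtain a nontrivial classical knot $J_n$ that is $GPV_n$-trivial, and fix a diagram $\widetilde{J_n}$ together with pairwise disjoint, nonempty sets $A_1,\dots,A_n$ of real crossings witnessing this: for every nonempty $S\subseteq\{1,\dots,n\}$, virtualizing all crossings in $\bigcup_{i\in S}A_i$ yields an unknot diagram. Then I would define $K^\ell_n:=K\#J_n^{\#\ell}$, represented by the diagram in which $\widetilde{K}$ is concatenated with $\ell$ disjoint copies of $\widetilde{J_n}$. Transplanting the sets $A_1,\dots,A_n$ from one of those copies into this diagram shows that $K^\ell_n$ is $GPV_n$-similar to $K^{\ell-1}_n$, since virtualizing $\bigcup_{i\in S}A_i$ for any nonempty $S$ turns the chosen copy of $J_n$ into the unknot and leaves the rest of the connect sum intact.

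The crucial technical step is the principle: \emph{$GPV_n$-similar virtual knots share all finite type invariants of GPV-order $\le n-1$}. I would prove this by a telescoping argument on $\sum_i|A_i|$: the alternating sum $\sum_{S\subseteq\{1,\dots,n\}}(-1)^{|S|}v(D_S)$ can be expressed as a $\mathbb{Z}$-linear combination of defining GPV-order alternating sums over $n$ individual crossings (one taken from each $A_i$, along with auxiliary single-crossing differences arising from the extra elements of each $A_i$). Since $n\ge m+1$, each such summand vanishes by the definition of GPV-order $\le m$, so the total is zero, forcing $v(D_\emptyset)=v(D_{\{1,\dots,n\}})$ after a straightforward bookkeeping of signs. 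Iterating this from $K^\ell_n$ down through $K^{\ell-1}_n,\dots,K^1_n$ to $K$ yields $v^{\gpv}_m(K^\ell_n)=v^{\gpv}_m(K)$ for every $\ell$ and every $m\le n-1$.

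Finally, Schubert's additivity of the Seifert genus under connect sum gives $g(K^\ell_n)=g(K)+\ell\cdot g(J_n)$, and $g(J_n)\ge1$ because $J_n$ is a nontrivial classical knot, so the $K^\ell_n$ have strictly increasing genus and are therefore pairwise distinct. The main obstacle is the telescoping step, where the alternating sum over subsets of the possibly-large sets $A_i$ must be rewritten as a combination of GPV alternating sums over individual crossings; once this combinatorial bookkeeping is in place, the rest is immediate. A secondary point to verify is that Lemma~\ref{lemma2} really supplies a \emph{classical} (not merely virtual) knot $J_n$, which is essential for invoking Schubert's theorem and for the $K^\ell_n$ themselves to be classical.
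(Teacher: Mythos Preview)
Your proposal is correct and follows the same overall architecture as the paper: produce a $GPV_n$-trivial classical knot from Lemma~\ref{lemma2}, connect-sum it onto $K$, and invoke the principle (Lemma~\ref{lem:v_n-simi}, the GPV analogue of Ohyama's lemma) that $GPV_n$-similar knots agree on all finite type invariants of GPV-order $\le n-1$. Your telescoping sketch is exactly how that lemma is proved.

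The differences are in the bookkeeping of the infinite family. The paper's Lemma~\ref{lemma2} already builds, for each $n$, an \emph{entire family} $K_n^\ell$ of $GPV_n$-trivial classical knots indexed by $\ell$, and distinguishes them via an explicit Jones polynomial formula due to Kanenobu; the theorem then follows by taking $K\,\sharp\,K_n^\ell$. You instead extract a single $J_n$ from that lemma, form $K\,\sharp\,J_n^{\sharp \ell}$, and iterate the similarity argument $\ell$ times, distinguishing the resulting knots by additivity of Seifert genus. Both routes are valid: the paper's gives a one-shot application of Lemma~\ref{lem:v_n-simi} and keeps the distinction computation inside Lemma~\ref{lemma2}, while yours trades the Jones polynomial input for the more elementary genus argument at the cost of an extra (harmless) induction on $\ell$.
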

\begin{theorem}\label{thmFn}
Let $O$ be a trivial knot.  For any positive integers $m$ and $n$ $(m \le n-1)$, and for any finite type invariant $v^F_m$ of F-order $m$, there exists a nontrivial virtual knot $K_n$ such that $v^F_m (K_n)=v^F_m (O)$.
\end{theorem}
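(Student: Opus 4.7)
The plan is to combine a nontrivial $F_n$-trivial virtual knot produced by Lemma~\ref{lemma4} with the standard alternating-sum vanishing argument for finite type invariants of F-order at most $m \leq n-1$, mirroring Goussarov's proof in the classical Vassiliev setting.

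First, I would invoke Lemma~\ref{lemma4} to fix a nontrivial virtual knot $K_n$ that is $F_n$-trivial, and let $\widetilde{K_n}$ be a diagram realizing this structure. By definition there exist pairwise disjoint, nonempty families $A_1, \ldots, A_n$ of triangles in $\widetilde{K_n}$ such that, for every nonempty $S \subseteq \{1, \ldots, n\}$, the diagram $\widetilde{K_n}(S)$ obtained from $\widetilde{K_n}$ by performing a single forbidden move at each triangle in $\bigcup_{i \in S} A_i$ represents the trivial knot $O$.

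Next, I would form the signed sum
\[
J = \sum_{S \subseteq \{1, \ldots, n\}} (-1)^{|S|}\, v^F_m\bigl(\widetilde{K_n}(S)\bigr).
\]
Since $\widetilde{K_n}(\emptyset) = \widetilde{K_n}$, $\widetilde{K_n}(S)$ represents $O$ for every nonempty $S$, and $\sum_{\emptyset \neq S \subseteq \{1, \ldots, n\}}(-1)^{|S|} = -1$, this reduces to $J = v^F_m(K_n) - v^F_m(O)$. When each family $A_i$ is a single triangle, $J$ is literally the defining alternating sum of a finite type invariant of F-order $\leq n-1$ on the $n$ disjoint triangles $A_1, \ldots, A_n$. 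Since $v^F_m$ is of F-order $\leq m \leq n-1$, one obtains $J = 0$ by an iterated summation: fix any $m+1$ of the $n$ triangles and sum over the $2^{m+1}$ states inside; the inner sum vanishes by the F-order $\leq m$ condition (applied to the diagram obtained after the flips at the remaining $n - (m+1)$ triangles, on which the fixed triangles remain disjoint because forbidden moves are local), and summing the outer indices yields $J = 0$.

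The main obstacle is the general case in which some $A_i$ contains more than one triangle, where $J$ is only a restricted (not full) alternating sum over individual triangles and so the definition of F-order does not apply to it verbatim. I would address this by a telescoping argument: for any multi-triangle family $A_i$, pick a single triangle $t \in A_i$ and decompose each contribution $v^F_m(\widetilde{K_n}(S))$ with $S \ni i$ into two successive forbidden-move steps, first at $t$ and then at the remaining triangles of $A_i$, producing a signed identity that rewrites $J$ as a combination of analogous sums in which $A_i$ has been replaced by $\{t\}$. Iterating this reduction over all multi-triangle families reduces to the single-triangle case treated above, giving $v^F_m(K_n) = v^F_m(O)$; combined with the nontriviality of $K_n$ supplied by Lemma~\ref{lemma4}, this completes the proof of Theorem~\ref{thmFn}.
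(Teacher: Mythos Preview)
Your proposal is correct and matches the paper's approach: the paper deduces Theorem~\ref{thmFn} from Lemma~\ref{lem:ex_Fn} together with Lemma~\ref{lem:v_Fn-simi}, and your direct alternating-sum computation of $J$ with the telescoping reduction over multi-triangle families is exactly the content (and essentially the proof) of Lemma~\ref{lem:v_Fn-simi}, which the paper packages via the formal ``semi-triple point'' notation following Ohyama. One minor wording point: a single telescoping step splits $J$ into one summand with $A_i$ replaced by $\{t\}$ and another with $A_i$ replaced by $A_i\setminus\{t\}$ on the base diagram with $t$ already flipped, so it is the iteration (not the single step) that reaches the all-single-triangle case---but that is precisely what your ``Iterating this reduction'' covers.
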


\section{Proofs of Theorem~\ref{thmGPV} and Theorem~\ref{thmFn}}
\subsection{Proof of Theorem~\ref{thmGPV}}
To begin with, we introduce a new kind of crossing, which is called \emph{semi-virtual}.  At a semi-virtual crossing there are still over/under information.  In a diagram, every semi-virtual crossing is shown as a real one, but surrounded by a small circle.   Every semi-virtual crossing is related to the other types of crossings by the following formal relation: 
\begin{align*}
\parbox{30pt}{\includegraphics[width=30pt]{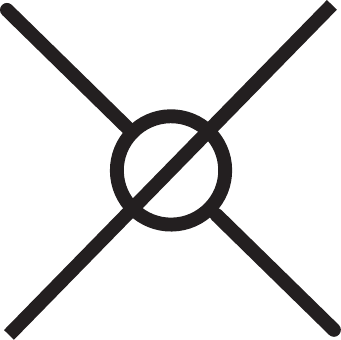}}~
:=~\parbox{30pt}{\includegraphics[width=30pt]{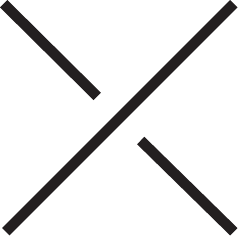}}~
-~\parbox{30pt}{\includegraphics[width=30pt]{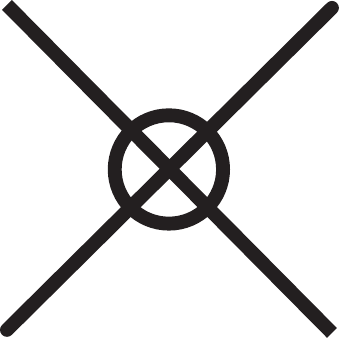}}.  
\end{align*}
Let $A_i=\{c_{i1}, c_{i2}, \ldots, c_{i\alpha(i)}\}$
and let
$
K\left(
    \begin{array}{cccc}
      1 & 2 & \ldots & k \\
      i_1 & i_2 & \ldots & i_k \\
    \end{array}
  \right)
$ be a diagram with semi-virtual crossings by replacing a real crossing \parbox{10pt}{\includegraphics[width=10pt]{crossing1.pdf}} with the virtual crossing \parbox{10pt}{\includegraphics[width=10pt]{virtual2.pdf}} at $
	 c_{11}, \ldots, c_{1i_1-1}, 
	 c_{21}, \ldots, c_{2i_2-1},  
	 \dots, 
	 c_{k1}, \ldots, c_{ki_k-1}
$ and replacing a real crossing \parbox{10pt}{\includegraphics[width=10pt]{crossing1.pdf}} with the semi-virtual crossing \parbox{10pt}{\includegraphics[width=10pt]{semi1.pdf}} at $
	 c_{1i_1}, c_{2i_2}, \dots, 
	 c_{ki_k}
$ (see Example~\ref{K-matrix_eg}).  
Here, note that
\begin{align*}
\parbox{30pt}{\includegraphics[width=30pt]{crossing1.pdf}}~
=~\parbox{30pt}{\includegraphics[width=30pt]{semi1.pdf}}~
+~\parbox{30pt}{\includegraphics[width=30pt]{virtual2.pdf}}.  
\end{align*}
Then, similarly to \cite[Page~289, Lemma~3]{Ohyama2} by Ohyama, it is elementary to see the following: 
\begin{lemma}
If $K$ and $K'$ are $GPV_n$-similar, a finite type invariant $v^{\gpv}_m$ of GPV-order $m (\le n)$ satisfies
\label{lem:v_n-simi}
\begin{align*}
\label{fig:nsimi}
v^{\gpv}_m(K)=v^{\gpv}_m(K') + \sum_{1\leq i_j \leq \alpha(j), 1 \leq j \leq n} v^{\gpv}_m \left( K\left(
    \begin{array}{cccc}
      1 & 2 & \ldots & n \\
      i_1 & i_2 & \ldots & i_n \\
    \end{array}
 \right)\right).
\end{align*}

In particular, if $K$ and $K'$ are $GPV_n$-similar and $v^{\gpv}_i$ is a finite type invariant for GPV-order $m$ $(\le n-1)$, then
\begin{align*}
v^{\gpv}_m(K)&=v^{\gpv}_m(K').
\end{align*}
\end{lemma}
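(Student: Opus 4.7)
The plan is to adapt Ohyama's argument from \cite[Lemma~3]{Ohyama2} to virtualization and semi-virtual crossings. Throughout I will use the standard fact that any finite type invariant $v^{\gpv}_m$ of GPV-order $\le m$ vanishes on every diagram carrying more than $m$ semi-virtual crossings: the semi-virtual identity (real) $=$ (semi-virtual) $+$ (virtual) shows that the defining alternating sum on $m+1$ real crossings coincides with $v^{\gpv}_m$ evaluated on the diagram in which those $m+1$ crossings have been replaced by semi-virtual ones, and one reduces the case of $k>m+1$ semi-virtuals to this by splitting the alternating sum into an outer and inner block.

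For $S \subseteq \{1,\ldots,n\}$, let $K_S$ denote the diagram obtained from $\widetilde{K}$ by replacing every real crossing in $\bigcup_{j \in S} A_j$ by a virtual crossing, so $K_\emptyset = \widetilde{K}$ and, by the $GPV_n$-similarity hypothesis, $K_S$ is a diagram of $K'$ for every nonempty $S$. The key step is to evaluate the alternating sum $\Sigma := \sum_{S \subseteq \{1,\ldots,n\}} (-1)^{|S|} v^{\gpv}_m(K_S)$ in two different ways. On one hand, plugging in $v^{\gpv}_m(K_\emptyset)=v^{\gpv}_m(K)$ and $v^{\gpv}_m(K_S)=v^{\gpv}_m(K')$ for $S\ne\emptyset$, together with $\sum_{S\subseteq\{1,\ldots,n\}}(-1)^{|S|}=0$ for $n\ge 1$, gives $\Sigma = v^{\gpv}_m(K) - v^{\gpv}_m(K')$.

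On the other hand, I would expand $v^{\gpv}_m(K_S)$ by applying (real) $=$ (semi-virtual) $+$ (virtual) one crossing at a time inside each $A_j$ with $j\notin S$. A one-set telescoping gives the formal identity $R_j = V_j + \sum_{i_j=1}^{\alpha(j)} S_{j,i_j}$, where $V_j$ is the configuration ``all crossings of $A_j$ are virtual'' and $S_{j,i_j}$ is ``virtual at $c_{j,1},\ldots,c_{j,i_j-1}$, semi-virtual at $c_{j,i_j}$, real at $c_{j,i_j+1},\ldots,c_{j,\alpha(j)}$''. Expanding the product over all sets and reindexing by $W \supseteq S$ (the sets that end up fully virtual) yields
\[
v^{\gpv}_m(K_S) \;=\; \sum_{W \supseteq S}\ \sum_{\mathbf{i} \in \prod_{j\notin W}\{1,\ldots,\alpha(j)\}} \Phi(W,\mathbf{i}),
\]
where $\Phi(W,\mathbf{i})$ is $v^{\gpv}_m$ of the diagram in configuration $V_j$ for $j\in W$ and $S_{j,i_j}$ for $j\notin W$. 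Substituting into $\Sigma$ and swapping sums, the coefficient of each $\Phi(W,\mathbf{i})$ becomes $\sum_{S \subseteq W}(-1)^{|S|}$, which vanishes unless $W=\emptyset$; the surviving $W=\emptyset$ terms are precisely the summands $v^{\gpv}_m\!\left(K\!\left(\begin{smallmatrix}1 & \cdots & n \\ i_1 & \cdots & i_n\end{smallmatrix}\right)\right)$. Matching the two evaluations of $\Sigma$ delivers the displayed identity.

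The ``In particular'' assertion then drops out immediately: each diagram $K\!\left(\begin{smallmatrix}1 & \cdots & n \\ i_1 & \cdots & i_n\end{smallmatrix}\right)$ carries exactly $n$ semi-virtual crossings, so for $m \le n-1$ the vanishing property from the first paragraph forces every summand on the right to be zero, whence $v^{\gpv}_m(K) = v^{\gpv}_m(K')$. I expect the main obstacle to be the combinatorial bookkeeping in the double expansion: one has to track carefully which crossings are real, virtual, or semi-virtual in every intermediate state so that the reindexing $W \supseteq S$ and the interchange of sums are rigorously justified. Once that setup is clean, the cancellation $\sum_{S \subseteq W}(-1)^{|S|}=0$ for $W \ne \emptyset$ does all the real work.
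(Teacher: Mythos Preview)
Your proposal is correct and is precisely the adaptation of Ohyama's argument that the paper invokes; the paper itself gives no proof beyond the remark that it is ``elementary'' and parallel to \cite[Lemma~3]{Ohyama2}, so you have supplied exactly the details the authors leave to the reader. The two-way evaluation of $\Sigma=\sum_{S}(-1)^{|S|}v^{\gpv}_m(K_S)$, the telescoping $R_j=V_j+\sum_{i_j}S_{j,i_j}$, and the M\"obius-type cancellation $\sum_{S\subseteq W}(-1)^{|S|}=[W=\emptyset]$ are the standard ingredients of Ohyama's proof, transported verbatim from crossing changes to virtualizations and semi-virtual crossings.
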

\begin{lemma}\label{lemma2}
For any positive integers $\ell$ and $n$, there exists a classical knot $K_n^{\ell}$ such that  $K_n^{\ell}$ is $GPV_n$-trivial.  
\label{lem:ex_GPV}
\end{lemma}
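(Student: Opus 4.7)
The plan is to construct $K_n^{\ell}$ explicitly for each pair of positive integers $\ell$ and $n$, by adapting Ohyama's construction of $n$-trivial classical knots~\cite{Ohyama2} from the crossing-change setting to the virtualization setting. First, I would identify a local building block: a small classical tangle whose Gauss diagram contains a designated chord set $A$ such that removing any nonempty subset of $A$ from the diagram---i.e., virtualizing the corresponding real crossings---reduces the tangle to a trivial one by generalized Reidemeister moves on Gauss diagrams. A clasp whose two crossings of opposite signs are both marked is a natural candidate, because virtualizing such a crossing enables a virtual Reidemeister~II cancellation with its partner.

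Second, I would attach $n$ such building blocks $B_1,\ldots,B_n$ to an underlying classical knot diagram in a nested, Brunnian-like arrangement, with $A_i$ the designated chord set of $B_i$. The nesting will be engineered so that (i) the resulting Gauss diagram is planarly realizable as a classical knot diagram, (ii) the full knot $K_n^{\ell}$ is nontrivial, and (iii) for every nonempty subfamily $S \subseteq \{A_1,\ldots,A_n\}$, virtualizing every crossing in $\bigcup_{A_i \in S} A_i$ trivializes the diagram. The parameter $\ell$ is then accommodated by inserting $\ell$ extra full twists in a region of the diagram disjoint from every $A_i$; this yields, for each fixed $n$, infinitely many pairwise distinct classical knots distinguishable by a classical invariant such as the Alexander polynomial, all of which remain $GPV_n$-trivial. (Alternatively, the $\ell$-fold connected sum of a single nontrivial $GPV_n$-trivial knot, with $A_i$ defined as the union of the $i$-th sets across the summands, already achieves the same effect.)

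The main obstacle is verifying the Brunnian-style property (iii): virtualizing any nonempty subfamily of $\{A_1,\ldots,A_n\}$ must trivialize the whole Gauss diagram, not merely the full family. In Ohyama's crossing-change setup this is achieved by nesting clasps so that activating any one layer collapses the whole construction; for virtualizations the analogous collapse should follow from the trivialization of each $B_i$ under virtualization of $A_i$, together with the absence of residual entanglement between the blocks that remain once any are dissolved. A secondary technical point is to verify classical realizability of the full Gauss diagram, which is handled by exhibiting it explicitly as a planar diagram rather than just as an abstract chord diagram.
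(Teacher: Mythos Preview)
Your proposal is essentially the paper's approach: both adapt Ohyama's nested-clasp construction of $n$-trivial knots from the crossing-change setting to virtualization, exhibit the $A_i$ explicitly on a classical diagram, and then verify nontriviality and pairwise distinctness by a polynomial invariant. The only substantive differences are cosmetic: the paper presents $K_n^{\ell}$ by a single explicit figure and certifies nontriviality and distinctness via the Jones polynomial using Kanenobu's formula $V_{K_n^{\ell}}(t)=(t^2-1)(V_{K_n}(t)-1)\sum_{i=0}^{\ell-1}t^{-2i}+V_{K_n}(t)$, whereas you propose the Alexander polynomial (or connected sums) for the same purpose. One small wording issue to tighten: in your clasp mechanism, it is virtualizing \emph{both} crossings of the clasp (the whole set $A_i$) that yields a virtual Reidemeister~II cancellation, not virtualizing a single crossing against a remaining real partner---the latter is not a generalized Reidemeister move.
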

\begin{proof}
Let $K^\ell_n$ be a knot and $A_j$ ($1 \le j \le i$) a non-empty set of crossings as in Fig.~\ref{dfn_kn}.    
\begin{figure}[htbp]
\centering
{\includegraphics[width=7.5cm,clip]{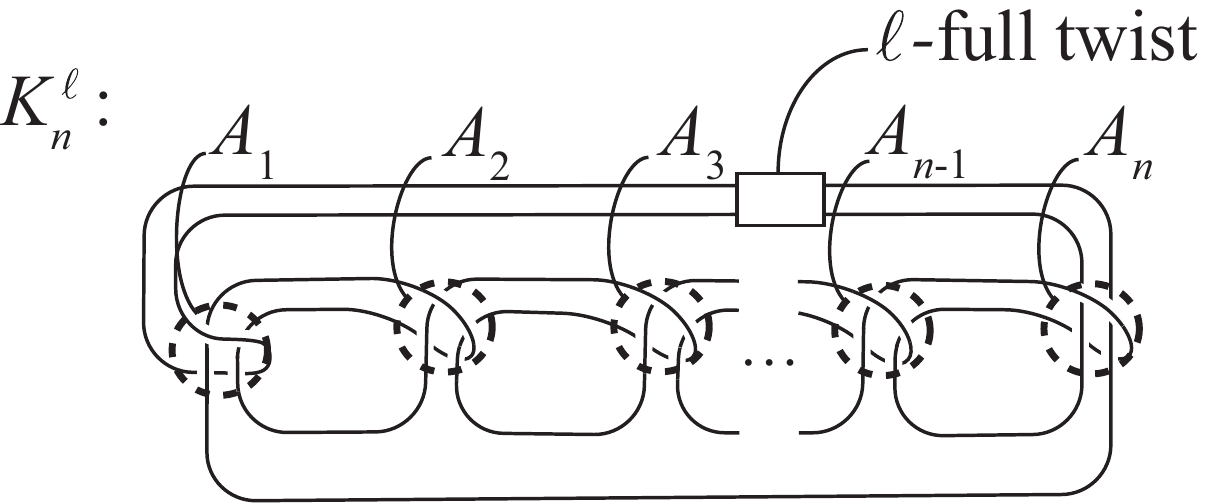}}
\caption{$K^\ell_n$.}\label{dfn_kn}
\end{figure}
Since $V_{K_n^{\ell}}(t)=(t^2-1)(V_{K_n}(t)-1)\sum_{i=0}^{\ell -1}(t^{-2i})+V_{K_n}(t)\mbox{ and }V_{K_n}(t) \neq 1$  (by Kanenobu \cite{kanenobu1}),
$K_n^{\ell}$ is a nontrivial knot.   Note also that for positive integers $\ell$ and $\ell '$ ($\ell \le \ell '$), $K_n^{\ell}$ and $K_n^{\ell '}$ are not equivalent.
\end{proof}
For two classical knots $L$ and $L'$, $L \sharp L'$ denotes a connected sum of $L$ and $L'$.  Suppose that $K$ is a classical knot.  Recall that, in the proof of Lemma~\ref{lem:ex_GPV}, we give a classical knot $K_n^{\ell}$ concretely.  By definition, it is easy to see that $K$ and $K \sharp K_n^{\ell}$ are $n$-similar.  
Then, by using Lemma \ref{lem:v_n-simi}, $v^{\gpv}_{m} (K)$ $=$ $v^{\gpv}_{m} (K \sharp K_n^{\ell})$.    
$\hfill\Box$
\begin{example}\label{K-matrix_eg}
The key point is that if $K\left(
    \begin{array}{cccc}
      1 & 2 & \ldots & n \\
      i_1 & i_2 & \ldots & i_n \\
    \end{array}
 \right)$ has $n$ columns, then it has $n$ semi-virtual crossings.  For example, a virtual knot diagram of the leftmost figure of Fig.~\ref{K-matrix} is $GPV_2$-trivial by $\{ A_1, A_2 \}$ where $A_1$ $=$ $\{ c_{11}, c_{12} \}$ and $A_2$ $=$ $\{ c_{21}, c_{22} \}$.  By definition, $\alpha(1)$ $=$ $2$ and $\alpha(2)$ $=$ $2$.  Every $K\left(
    \begin{array}{cccc}
      1 & 2 \\
      i_1 & i_2 \\
    \end{array}
 \right)$ is obtained by Fig.~\ref{K-matrix}.  
\begin{figure}[h!]
\includegraphics[width=12cm]{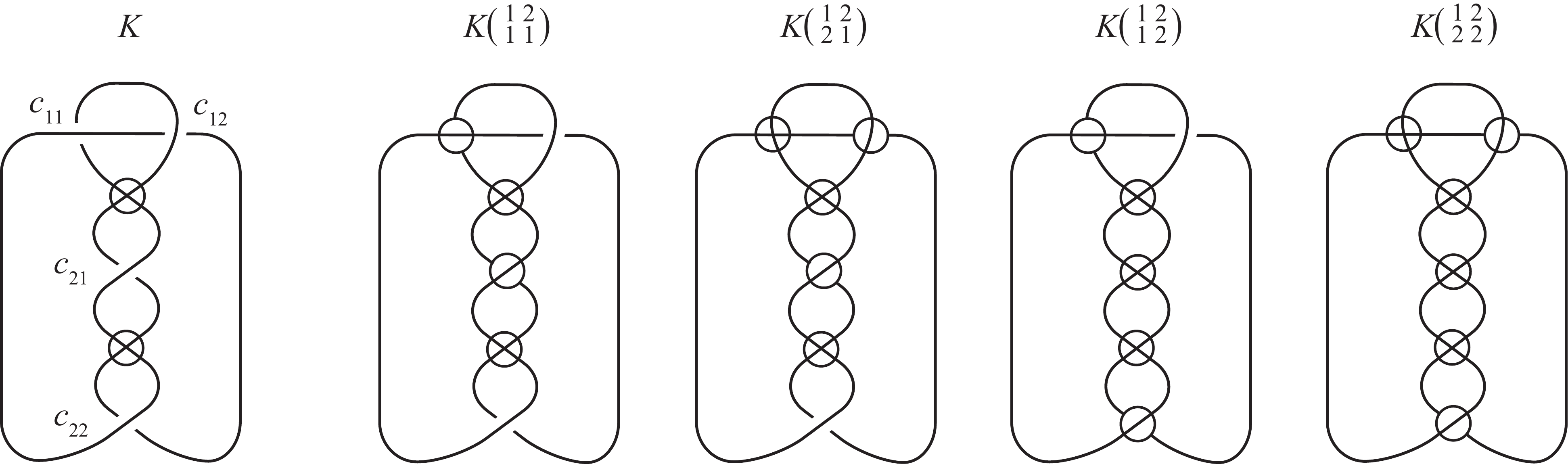}
\caption{$GPV_2$-trivial knot diagrams.}\label{K-matrix}
\end{figure}
\end{example}
\subsection{Proof of Theorem~\ref{thmFn}}\label{secFn}
To begin with, we introduce a new kind of crossing, which is called \emph{semi-triple point}.  A semi-triple point is a sufficiently small disk that consists a single triple point consisting of an over path and a virtual crossing, as \parbox{10pt}{\includegraphics[width=10pt]{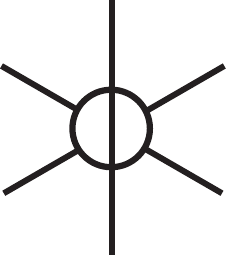}} or \parbox{10pt}{\includegraphics[width=10pt]{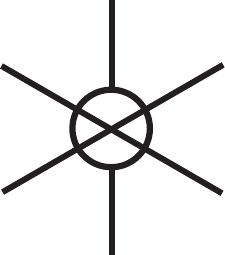}}.  That is, 
at a semi-triple point there is still over/under information.  In a diagram, every semi-triple point is shown as a triple point, but surrounded by a small circle.   Every semi-triple point is related to $\epsilon$- and $(-\epsilon)$-triangles by the following formal relation:
\begin{align*}
\parbox{40pt}{\includegraphics[width=40pt]{triplepoint1.pdf}}~
&:=~\parbox{40pt}{\includegraphics[width=40pt]{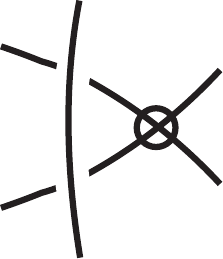}}~
-~\parbox{40pt}{\includegraphics[width=40pt]{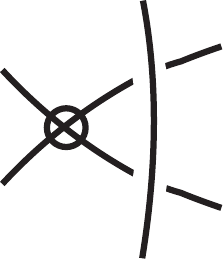}} ,\\
& \qquad {\textrm{positive}} \qquad {\textrm{negative}}\\
\parbox{40pt}{\includegraphics[width=40pt]{triplepoint2.pdf}}~
&:=~\parbox{40pt}{\includegraphics[width=40pt]{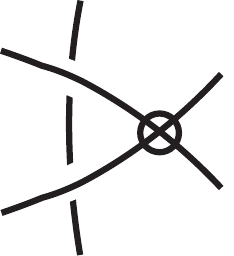}}~
-~\parbox{40pt}{\includegraphics[width=40pt]{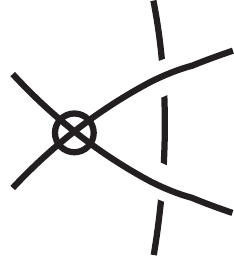}}\\
& \qquad {\textrm{positive}} \qquad {\textrm{negative}}\\
\end{align*}
where the sign of a triangle is defined as in Definition~\ref{dfn_triangle}.  
   
Let $A_i=\{c_{i1}, c_{i2}, \ldots, c_{i\alpha(i)}\}$ consisting of triangles.  Let
$
K\left(
    \begin{array}{cccc}
      1 & 2 & \ldots & k \\
      i_1 & i_2 & \ldots & i_k \\
    \end{array}
  \right)
$ be a diagram with semi-triple points by replacing an $\epsilon$-triangle \parbox{10pt} {\includegraphics[width=10pt]{triangledisk1.pdf}} (\parbox{10pt}{\includegraphics[width=10pt]{triangledisk3.pdf}}, resp.) with the $(-\epsilon)$-triangle \parbox{10pt}{\includegraphics[width=10pt]{triangledisk2.pdf}}  (\parbox{10pt}{\includegraphics[width=10pt]{triangledisk4.pdf}}, resp.)  at $
	 c_{11}, \ldots, c_{1i_1-1}$, 
	 $c_{21}, \ldots, c_{2i_2-1},$ 
	 $\ldots, 
	 c_{k1}, \ldots,$ $c_{ki_k-1}
$ and replacing a triangle \parbox{10pt}{\includegraphics[width=10pt]{triangledisk1.pdf}} (\parbox{10pt}{\includegraphics[width=10pt]{triangledisk3.pdf}}, resp.) with the semi-triple point  \parbox{10pt}{\includegraphics[width=10pt]{triplepoint1.pdf}} (\parbox{10pt}{\includegraphics[width=10pt]{triplepoint2.pdf}}, resp.)
at $
	 c_{1i_1}, c_{2i_2}, \dots, 
	 c_{ki_k}
$.  
Here, note that
\begin{align*}
&\parbox{30pt}{\includegraphics[width=30pt]{triangledisk1.pdf}}~
=~\parbox{30pt}{\includegraphics[width=30pt]{triangledisk2.pdf}}~
+~\parbox{30pt}{\includegraphics[width=30pt]{triplepoint1.pdf}},  \\
& {\text{positive}} \quad
{\text{negative}}
\end{align*}
\begin{align*}
&\parbox{30pt}{\includegraphics[width=30pt]{triangledisk1.pdf}}~
=~\parbox{30pt}{\includegraphics[width=30pt]{triangledisk2.pdf}}~
-~\parbox{30pt}{\includegraphics[width=30pt]{triplepoint1.pdf}},  \\
& {\text{negative}} \quad
{\text{positive}}
\end{align*}
\begin{align*}
\qquad&\parbox{30pt}{\includegraphics[width=30pt]{triangledisk3.pdf}}~
=~\parbox{30pt}{\includegraphics[width=30pt]{triangledisk4.pdf}}~
+~\parbox{30pt}{\includegraphics[width=30pt]{triplepoint2.pdf}},~{\textrm{and}}~  \\
& {\text{positive}} \quad
{\text{negative}}
\end{align*}
\begin{align*}
&\parbox{30pt}{\includegraphics[width=30pt]{triangledisk3.pdf}}~
=~\parbox{30pt}{\includegraphics[width=30pt]{triangledisk4.pdf}}~
-~\parbox{30pt}{\includegraphics[width=30pt]{triplepoint2.pdf}}.  \\
& {\text{negative}} \quad
{\text{positive}}
\end{align*}
That is, every $\epsilon$-triangle equals the sum of $(-\epsilon)$-triangle and $\pm$ semi-triple point where the sign $\pm$ coincides with $\epsilon$.  Then, in the rest of the paper, for a given triangle, the induced sign, $+$ or $-$, which is the coefficient of a semi-triple point, is called \emph{the sign of a semi-triple point}.  For $c_{li_l}$ as above, the sign of a semi-triple point is denoted by $\epsilon_{li_l}$.  

Then, similarly to \cite[Page~289, Lemma~3]{Ohyama2} by Ohyama, it is elementary to see the following:
\begin{lemma}
If $K$ and $K'$ are $F_n$-similar, a finite type invariant $v^F_m$ of F-order $m (\le n)$ satisfies
\label{lem:v_Fn-simi}
\begin{align*}
v^F_m(K)=v^F_m(K') + \sum_{1\leq i_j \leq \alpha(j), 1 \leq j \leq n} \varepsilon_{1i_1}\varepsilon_{2i_2}\cdots \varepsilon_{ni_n} v^F_m \left( K\left(
    \begin{array}{cccc}
      1 & 2 & \ldots & n \\
      i_1 & i_2 & \ldots & i_n \\
    \end{array}
 \right)\right)
\end{align*}
where $\varepsilon_*$ is the sign of a semi-triple point.  

In particular, 
if $K$ and $K'$ are $F_n$-similar and $v^F_m$ is a finite type invariant of F-order $m$ $(\le n-1)$, then
\begin{align*}
v^F_m(K)&=v^F_m(K').
\end{align*}
\end{lemma}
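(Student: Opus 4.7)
The plan is to follow the Ohyama-type telescoping argument indicated in the excerpt, using the semi-triple-point identity $(\varepsilon\text{-triangle})=((-\varepsilon)\text{-triangle})+\varepsilon\,(\text{semi-triple point})$ set up immediately before the statement. For each $\delta=(\delta_{1},\ldots,\delta_{n})\in\{0,1\}^{n}$, let $K^{\delta}$ denote the diagram obtained from $K$ by applying forbidden moves to every triangle of $A_{j}$ with $\delta_{j}=1$. Since $K$ and $K'$ are $F_{n}$-similar, the diagram $K^{\delta}$ represents $K'$ as a virtual knot for every nonzero $\delta$, while $K^{(0,\ldots,0)}=K$. Combined with $\sum_{\delta\in\{0,1\}^{n}}(-1)^{|\delta|}=0$ for $n\geq 1$, this yields the identity
\[
\sum_{\delta\in\{0,1\}^{n}}(-1)^{|\delta|}\,v^{F}_{m}(K^{\delta})=v^{F}_{m}(K)-v^{F}_{m}(K').
\]

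Next I would telescope the left-hand side one set $A_{j}$ at a time. With the states of the other $A_{i}$ fixed and the semi-triple identity chained along $c_{j,1},\ldots,c_{j,\alpha(j)}$, a routine computation gives
\[
v^{F}_{m}(K^{(\delta_{-j},0)})-v^{F}_{m}(K^{(\delta_{-j},1)})=\sum_{i_{j}=1}^{\alpha(j)}\varepsilon_{j,i_{j}}\,v^{F}_{m}(X_{\delta_{-j},i_{j}}),
\]
where $X_{\delta_{-j},i_{j}}$ has $c_{j,1},\ldots,c_{j,i_{j}-1}$ transformed, $c_{j,i_{j}}$ a semi-triple point, $c_{j,i_{j}+1},\ldots$ still original, and the remaining $A_{i}$ in the states dictated by $\delta_{-j}$. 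Applying this telescoping successively for $j=1,2,\ldots,n$ collapses the alternating $\{0,1\}^{n}$-sum into $\sum_{i_{1},\ldots,i_{n}}\varepsilon_{1,i_{1}}\cdots\varepsilon_{n,i_{n}}\,v^{F}_{m}\!\left(K\!\left(\begin{smallmatrix}1&\cdots&n\\i_{1}&\cdots&i_{n}\end{smallmatrix}\right)\right)$, yielding the stated formula.

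For the ``in particular'' statement, each $K\!\left(\begin{smallmatrix}1&\cdots&n\\i_{1}&\cdots&i_{n}\end{smallmatrix}\right)$ carries exactly $n$ pairwise disjoint semi-triple points, one per $A_{j}$. Applying the same semi-triple identity to the defining alternating sum in Definition~\ref{f-order} shows that a finite type invariant of F-order $\leq m$ must vanish on every diagram carrying at least $m+1$ disjoint semi-triple points. Since $m\leq n-1$, every summand vanishes and $v^{F}_{m}(K)=v^{F}_{m}(K')$. The main bookkeeping point to verify is that the local expansions at triangles belonging to distinct $A_{j}$'s commute, which is ensured by the disjointness of the triangles in different $A_{j}$.
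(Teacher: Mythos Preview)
Your argument is correct and is precisely the Ohyama-type telescoping that the paper invokes (the paper itself gives no details beyond ``similarly to \cite[Page~289, Lemma~3]{Ohyama2} \ldots\ it is elementary''); you have simply written out the steps the authors leave implicit. The only cosmetic point is that in your last paragraph you might note the global sign: expanding $n$ semi-triple points yields $\pm$ the alternating sum of Definition~\ref{f-order} (the sign being $\prod_{j}\varepsilon_{j i_{j}}$), but since vanishing is insensitive to a global sign this does not affect the conclusion.
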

\begin{notation}
Every braid appearing in this paper is a pure $4$-braid.  Thus, it is called a \emph{braid} simply.  
For a given braid $b$, the virtual knot diagram $\hat{b}$ obtained from $b$ by Fig.~\ref{closure} is called the \emph{closure} of the braid. 
\begin{figure}[htbp]
\includegraphics[width=4cm]{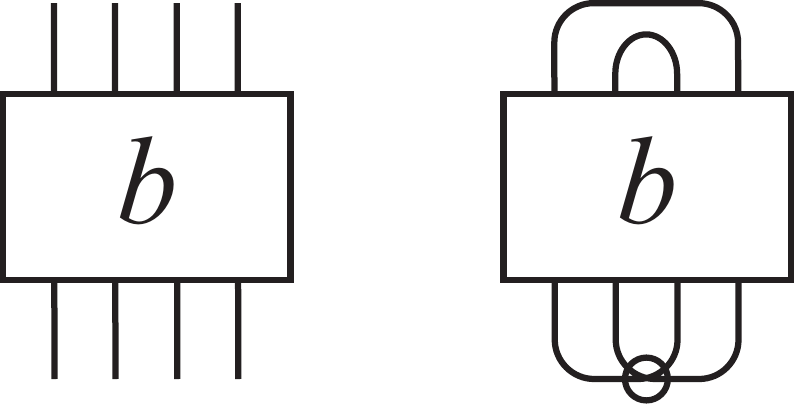}
\caption{A braid $b$ (left) and its closure $\hat{b}$ (right).}\label{closure}
\end{figure}  
\end{notation}
\begin{lemma}\label{lemma4}
For any positive integer $n$, there exists $K_n$ such that $K_n$ is $F_n$-trivial.
\label{lem:ex_Fn}
\end{lemma}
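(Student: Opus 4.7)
The plan is to construct $K_n$ explicitly as the closure $\widehat{b_n}$ of a pure $4$-braid $b_n$ assembled from $n$ short ``blocks'' $\beta_1,\dots,\beta_n$ stacked in series. Each $\beta_i$ is a short word in real and virtual crossings carrying a single distinguished triangle $t_i$, and I set $A_i:=\{t_i\}$. Pairwise disjointness of the $A_i$ is then immediate from the block decomposition of $b_n$, so the content to verify is that $\widehat{b_n}$ is a nontrivial virtual knot (needed for the application of this lemma in Theorem~\ref{thmFn}) and that for every nonempty $S\subseteq\{1,\dots,n\}$, simultaneously performing the prescribed forbidden move at each $t_i$ with $i\in S$ converts $\widehat{b_n}$ into a trivial diagram.

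The design principle I would use for each $\beta_i$ is that performing the forbidden move at $t_i$ replaces $\beta_i$ by a braid word representing the identity pure $4$-braid, up to generalized Reidemeister moves. Consequently, once any $t_i$ is flipped, the block $\beta_i$ effectively disappears from the closure, and by an inductive cascade of Reidemeister moves on the remaining blocks — in direct analogy with Ohyama's construction and with the proof of Lemma~\ref{lemma2} — the entire diagram $\widehat{b_n}$ collapses to the trivial diagram. Nontriviality of $K_n$ itself I would establish by computing a suitable virtual knot invariant (for example a Jones or arrow polynomial, or the finite type invariant $v_{2,1}^{\gpv}$ from Example~\ref{pr:GPV}) directly from the Gauss diagram of $\widehat{b_n}$, and verifying that its value on $K_n$ differs from its value on the unknot.

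The main obstacle is the sharp asymmetry between the empty and nonempty subfamilies built into the definition of $F_n$-similarity: \emph{any} nonempty subfamily of forbidden moves must trivialize $\widehat{b_n}$, yet the unmodified diagram must itself be nontrivial. Independent, separately-stacked blocks will not do — a disjoint stacking of single-block $F_1$-trivial examples would either render $\widehat{b_n}$ already trivial or only give $F_1$-triviality — so the blocks must be threaded through one another in $b_n$, while still being ``Brunnian'' in the sense that collapsing any one of them sends the whole braid word to the identity pure braid. I expect the correct design to be a nested-commutator word in the pure $4$-braid generators, with the triangles $t_i$ placed at the hinges of successive commutators so that a forbidden move at any $t_i$ decouples the nesting one level at a time; verifying this nested cancellation uniformly in $S$, together with the nontriviality of the full closure, is where I expect the serious casework to lie.
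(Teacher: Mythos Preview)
Your outline of the construction is essentially what the paper does: $K_n$ is the closure $\hat{b}(n)$ of a pure $4$-braid defined recursively as an iterated commutator in two fixed short braids $A$ and $B$ (namely $b(1)=A$, $b(2)=[B,b(1)]$, and $b(k)=[A,b(k-1)]$ or $[B,b(k-1)]$ according to a period-$4$ pattern for $k\ge 3$), with the sets $A_l$ of triangles located on the generators introduced at the $l$-th level of nesting. So the ``Brunnian nested-commutator'' picture you converge to at the end of your sketch is exactly the paper's design, and the triviality-after-any-nonempty-subfamily-of-forbidden-moves follows from the usual commutator collapse.

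The genuine gap is in your plan for nontriviality. Your specific suggestion of $v_{2,1}^{\gpv}$ cannot work: by Example~\ref{pr:GPV} that invariant has F-order~$1$, so Lemma~\ref{lem:v_Fn-simi} forces $v_{2,1}^{\gpv}(K_n)=v_{2,1}^{\gpv}(O)$ for every $F_n$-trivial $K_n$ with $n\ge 2$. The same obstruction kills \emph{any} invariant of bounded F-order once $n$ is large enough, so whatever detects $K_n$ uniformly in $n$ must lie outside the very F-finite-type hierarchy you are trying to populate --- choosing the detecting invariant is not a side issue but the heart of the lemma. The paper does not use a polynomial invariant here at all: it uses Manturov's $\mathbb{Z}_2$-Khovanov homology $\kh^{i,j}$ for virtual knots and devotes an entire section (Section~\ref{secComputation}) to exhibiting, for each $k$, an explicit enhanced state $S(\hat{b}(k))$ which is the unique generator of its chain group at some bigrading $(i,j_0)$ with $|j_0|\neq 1$ and which has vanishing differential in and out, yielding $\kh^{i,j_0}(\hat{b}(k))\neq 0$. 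Without a substitute for that computation, your proposal gives $F_n$-triviality but not the nontriviality of $K_n$ needed for Theorem~\ref{thmFn}.
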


\begin{proof}
Let
\begin{align*}
A =\parbox{30pt}{\includegraphics[width=30pt]{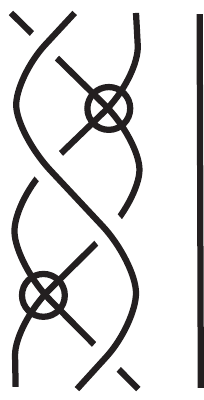}}~, 
~A^{-1} =~\parbox{30pt}
{\includegraphics[width=30pt]{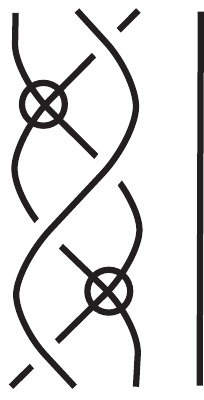}}~,~ B =\parbox{30pt}{\includegraphics[width=30pt]{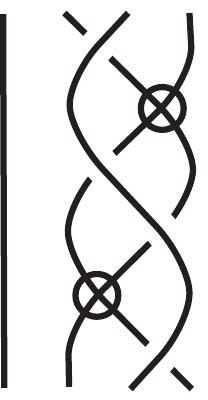}}~, ~{\textrm{and}}~
B^{-1} =~\parbox{30pt}{\includegraphics[width=30pt]{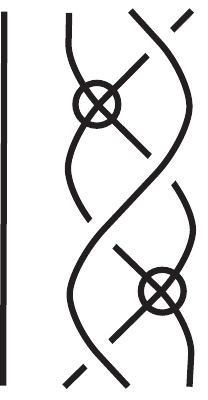}}. 
\end{align*}

Let $b(k)$ be a braid with a non-empty set $A_l$ ($1 \le l \le k$) consisting of disjoint triangles, $b(k)^{-1}$ its inverse, $\hat{b}(k)$ its closure, $[\cdot, \cdot]$ a commutator, and $D(\hat{b}(k))$ a virtual knot diagram defined as follows: 
$b(1)=A,$ $b(2)=[B,b(1)]$, $b(4u-1)=[B,b(4u-2)]$,
$b(4u)=[A,b(4u-1)]$, $b(4u+1)=[A,b(4u)]$, $b(4u+2)=[B,b(4u+1)]$ $(u \geq 1).$  Note that the definition of $b(k)^{-1}$ having $A_l$ ($1 \le l \le k$) is defined as the mirror image of $b(k)$ with $A_l$ ($1 \le l \le k$).   
  
\begin{align*}
\begin{array}{cccccc}
{\hat{b}(1)}&{\hat{b}(2)}&{\hat{b}(4u-1)}\\
{\parbox{40pt}{\includegraphics[width=40pt]{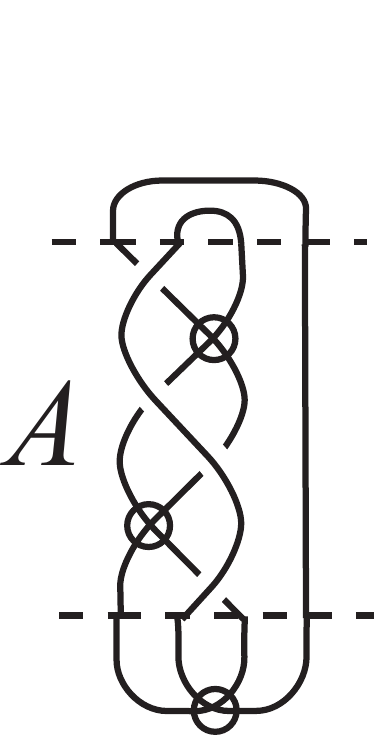}}}
{\parbox{40pt}{\includegraphics[width=40pt]{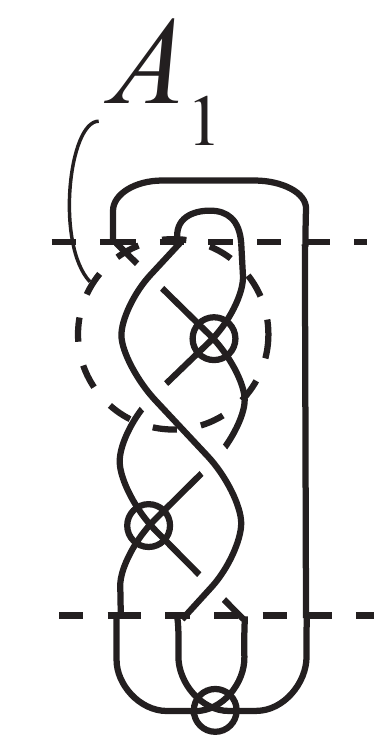}}}
&{\parbox{40pt}{\includegraphics[width=40pt]{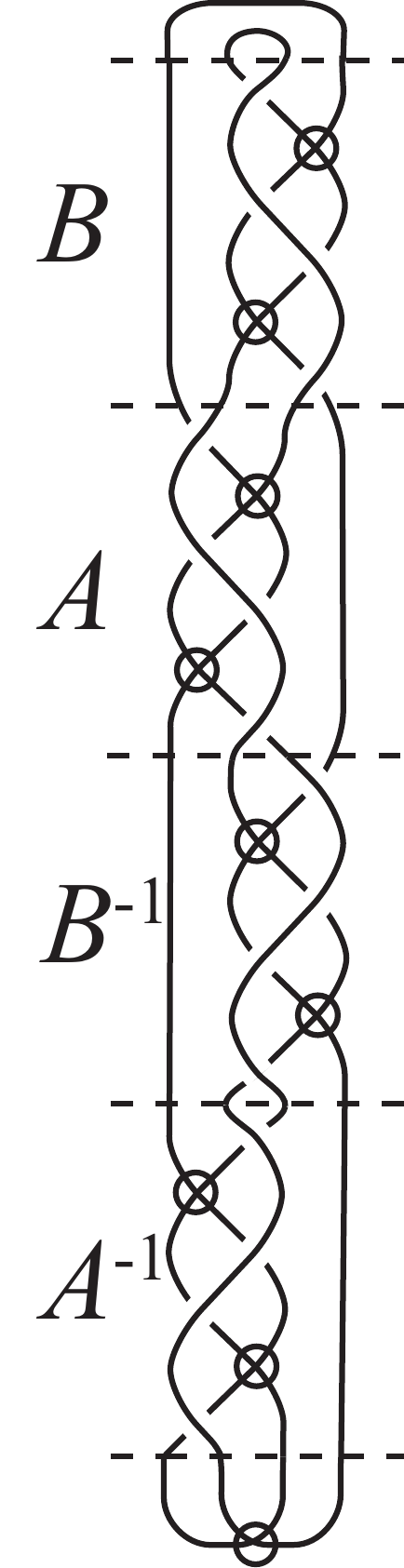}}}
{\parbox{40pt}{\includegraphics[width=40pt]{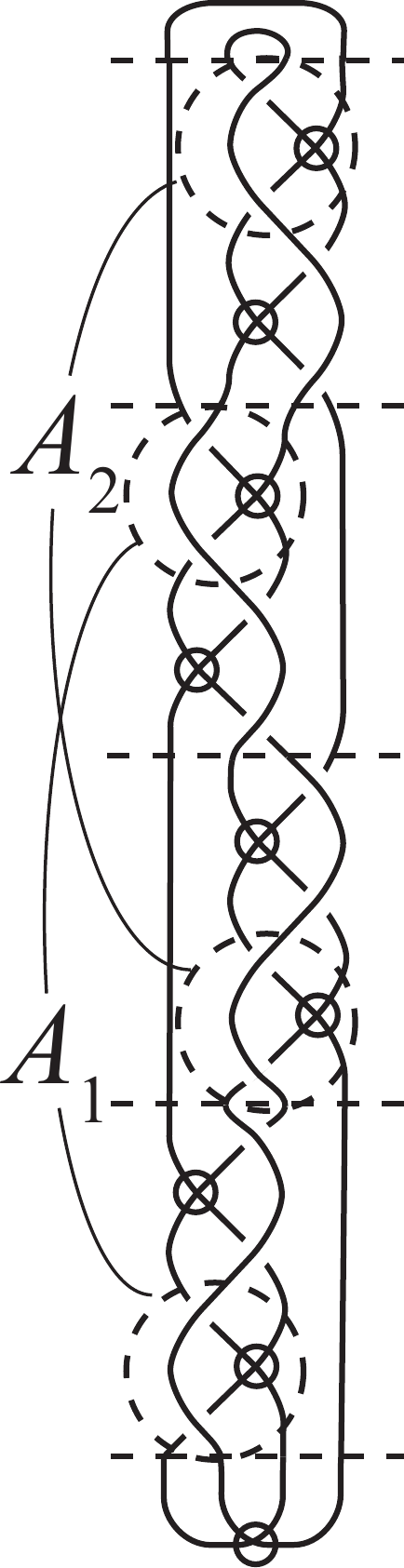}}}
&{\parbox{40pt}{\includegraphics[width=40pt]{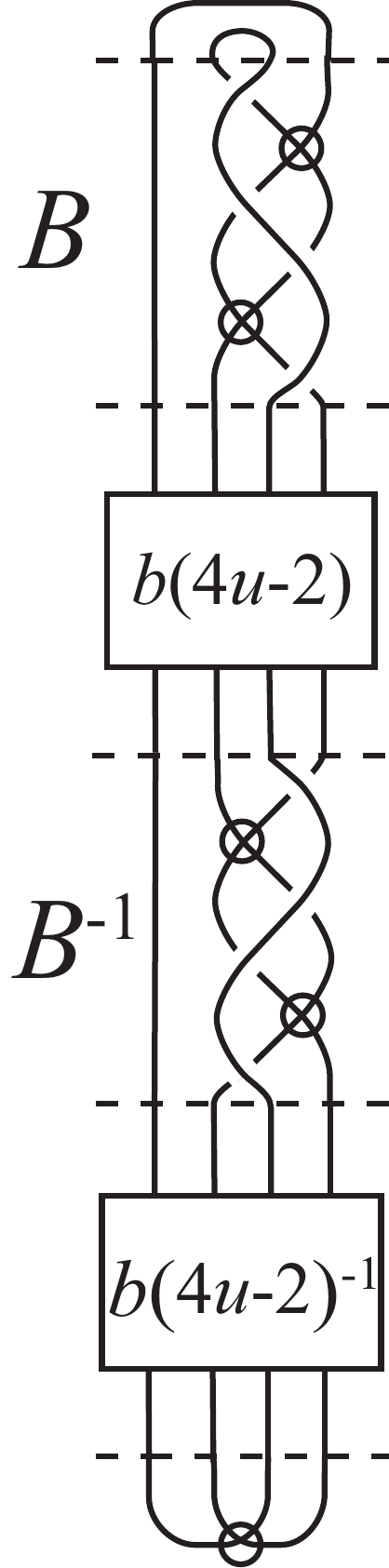}}}
{\parbox{40pt}{\includegraphics[width=40pt]{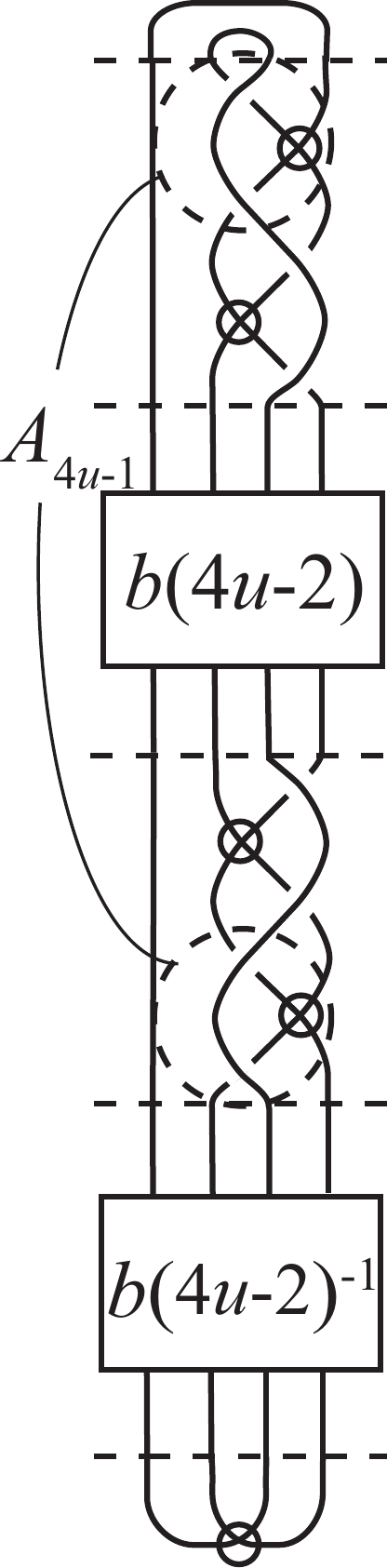}}}\\
\\
{\hat{b}(4u)}& {\hat{b}(4u+1)} &{\hat{b}(4u+2)}\\
{\parbox{45pt}{\includegraphics[width=45pt]{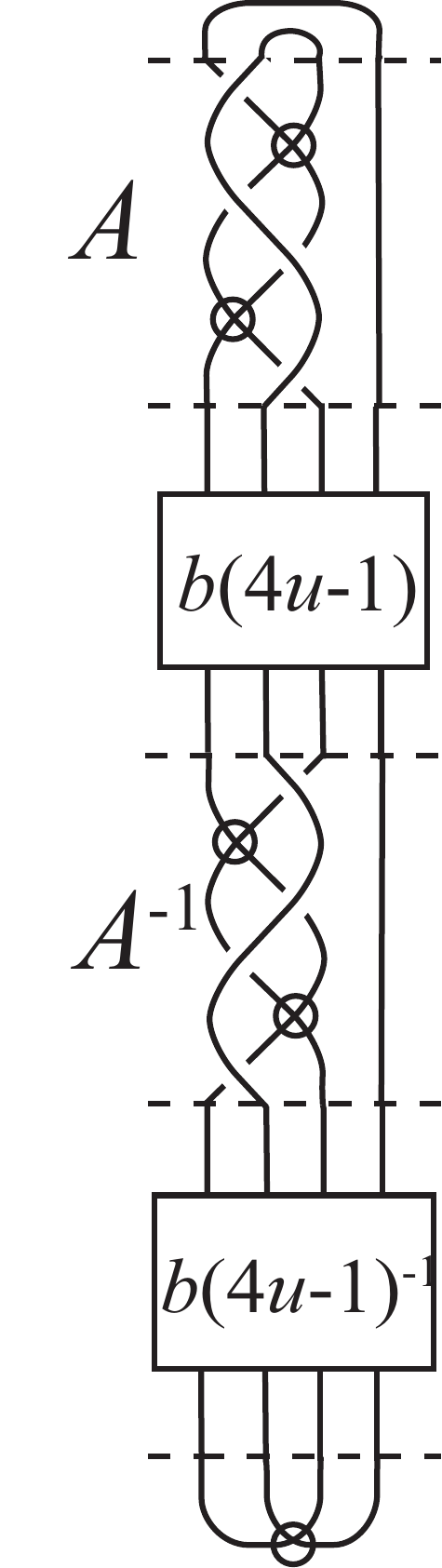}}}
{\parbox{45pt}{\includegraphics[width=45pt]{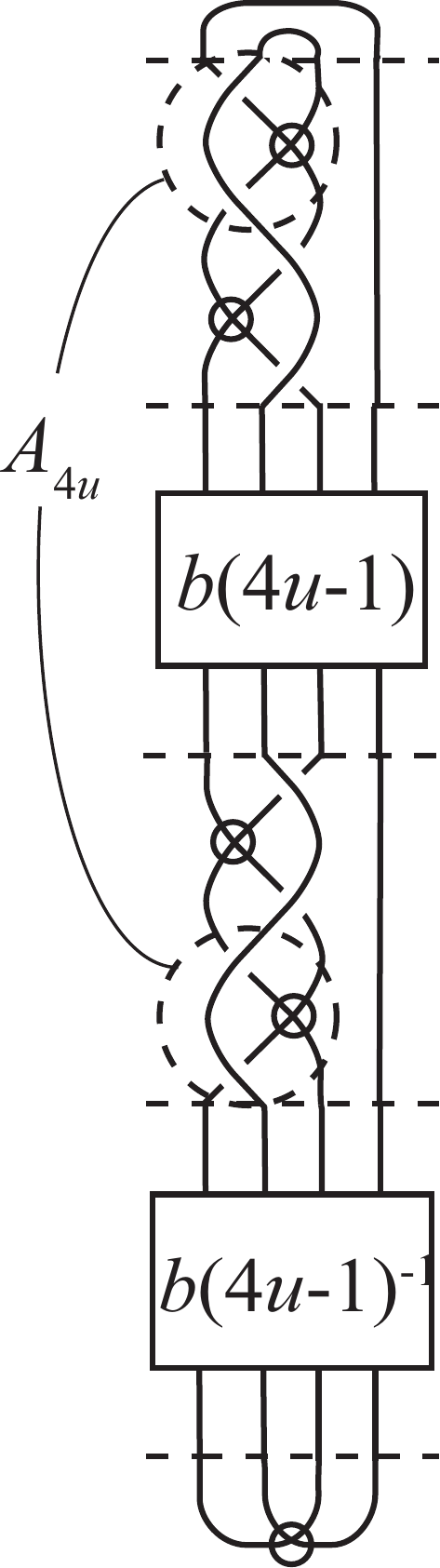}}}
&{\parbox{45pt}{\includegraphics[width=45pt]{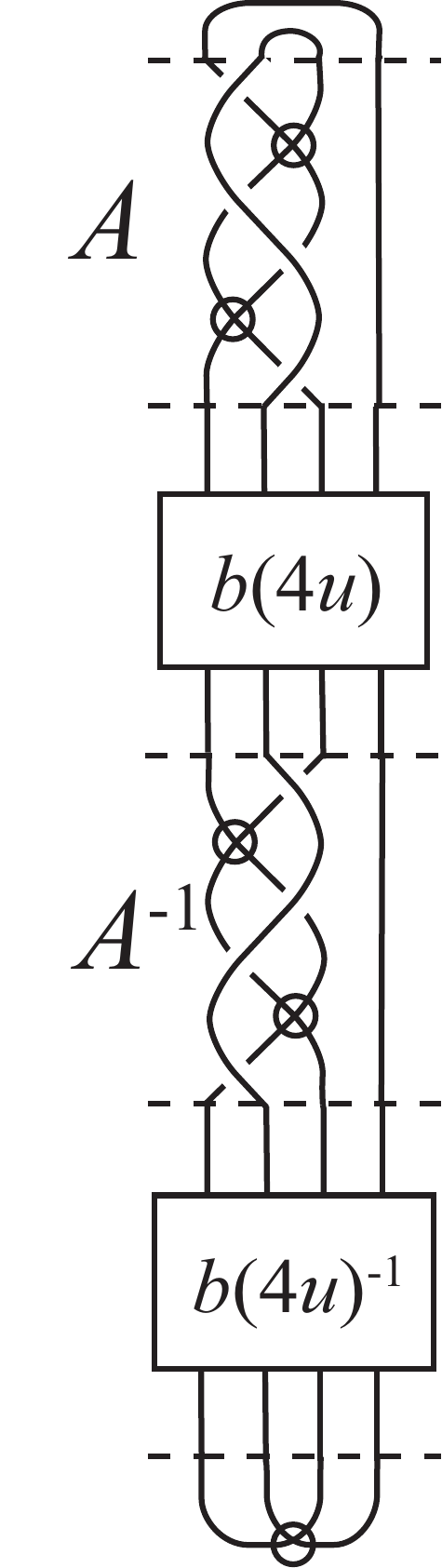}}}
{\parbox{45pt}{\includegraphics[width=45pt]{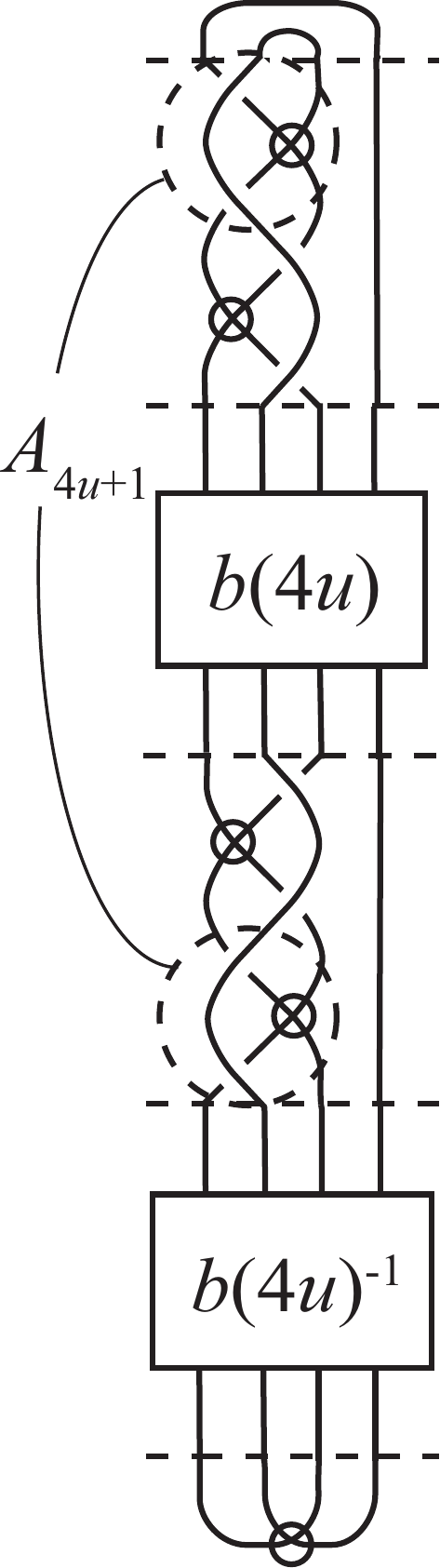}}}
&{\parbox{40pt}{\includegraphics[width=40pt]{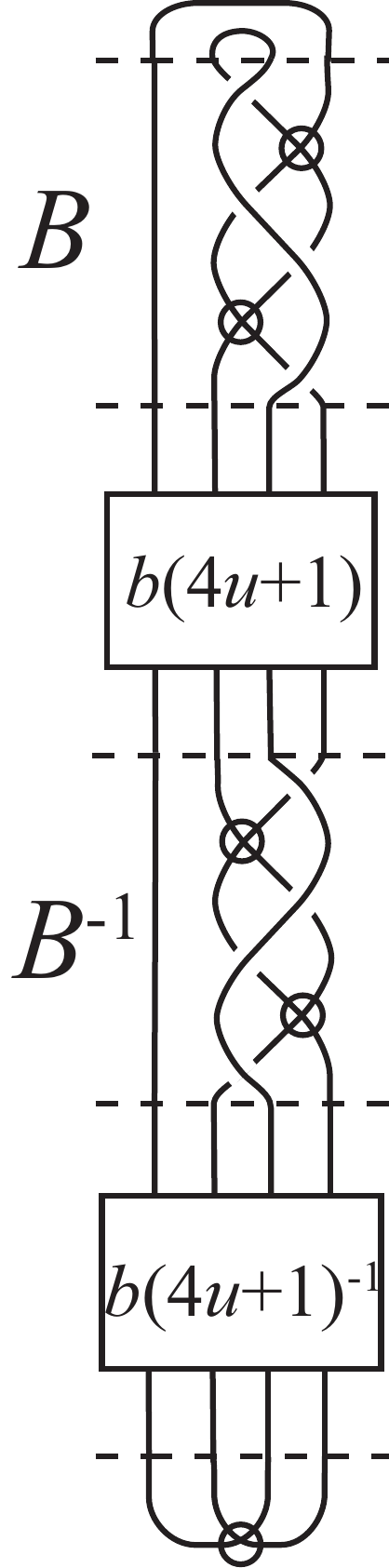}}}
{\parbox{40pt}{\includegraphics[width=40pt]{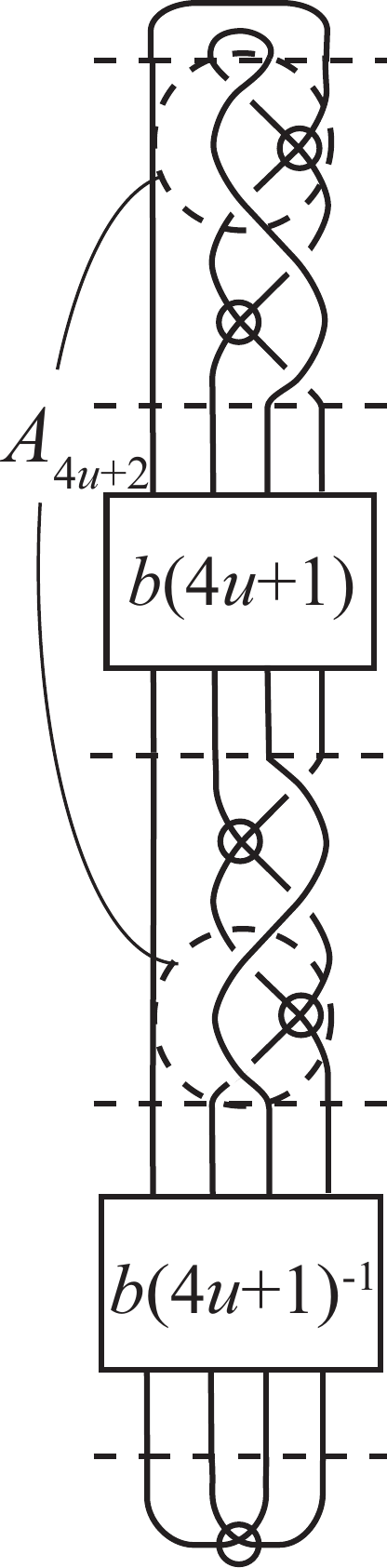}}}
\end{array}
\end{align*}

Let $\kh$ be a Khovanov homology by Manturov \cite{manturov} (for our notation, see Section~\ref{secManturov}).  Then, since for every $k \in \mathbb N$, there exist $i, j$ $( j \neq \pm 1)$ such that $\rank\, \left( \kh^{i, j} (\hat{b}(k)) \right) \geq 1$ by Section~\ref{secComputation}.  
Note that $\kh^{0, -1}({\textrm{unknot}})$ $=$ ${\mathbb{Z}}_2$, $\kh^{0, 1}({\textrm{unknot}})$ $=$ ${\mathbb{Z}}_2$, and for $i, j$ ($|j| \neq 1$), $\kh^{i, j}({\textrm{unknot}})$ $=$ $0$.  Thus, $\hat{b}(k)$ is a nontrivial virtual knot.    
\end{proof}

By Lemma~\ref{lem:v_Fn-simi} and Lemma~\ref{lem:ex_Fn}, we have  Theorem~\ref{thmFn}.
$\hfill\Box$

\section{On a computation of $\kh(\hat{b}(k))$.}\label{secComputation}

In the following, for every $\hat{b}(k)$, we show that there exist $i, j (|j| \neq 1)$ such that $\kh^{i, j}(\hat{b}(k))$ $\neq 0$.  We use definitions and notations in Section~\ref{secViro} for the Khovanov homology.  
Let $D$ $=$ $D(\hat{b}(k))$ as in Section~\ref{secFn} (see Lemma~\ref{lemma4}).  
We define the set $\mathcal{S}_{i, j_0}$ by
\[
{\mathcal{S}}_{i, j_0} = \{ S : {\textrm{enhanced state}}~|~i-1 \le i(S) \le i+1, j(S)=j_0,~{\textrm{every circle has the label}}~1 \}.  
\]
By definition, $\mathcal{S}_{i, j_0}$ is a generating set of the direct sum of chain groups $C^{i-1, j_0}(D)$ $\oplus C^{i, j_0}(D)$ $\oplus C^{i+1, j_0}(D)$.   
For a given virtual knot diagram, the arrangement of disjoint circles on a plane by smoothing along every marker of each real crossing is called a \emph{state}.  A state with labels by assigning a label $x$ or $1$ for every circle in a state is called an \emph{enhanced state}.    
For an enhanced state $S$ (a state $s$, resp.), let $|s|$ $=$ the number of circles in $s$.  
\begin{lemma}\label{lemma5}
Let $S \in \mathcal{S}_{i, j_0}$.  Let $s$ be the state obtained from $S$ by ignoring labels $x, 1$ satisfying the following conditions $(\ref{1})$ and $(\ref{2})$: 
\begin{enumerate}
\item  For every state $t$ obtained from $s$ by replacing a positive marker with the negative maker, $|t| = |s|$.  
\label{1}
\item For every state $t$ obtained from $s$ by replacing a negative marker with the positive maker, $|t| \le |s|$.
\label{2}
\end{enumerate}
Then, $C^{i-1, j_0}(D)$ $\oplus C^{i, j_0}(D)$ $\oplus C^{i+1, j_0}(D)$ is generated by $\{S\}$, i.e., 
\[
\mathcal{S}_{i, j_0} = \{ S : {\textrm{enhanced state}}~|~ i-1 \le i(S) \le i+1, j(S)=j_0 \} = \{ S \}.  
\]
\end{lemma}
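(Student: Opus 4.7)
The plan is to prove $\mathcal{S}_{i, j_0} = \{S\}$ by showing that any $S' \in \mathcal{S}_{i, j_0}$ must coincide with $S$. Since the defining condition of $\mathcal{S}_{i, j_0}$ pins every label to $1$, an enhanced state in this set is determined by its underlying marker assignment, so it suffices to prove $s' = s$, where $s'$ is the underlying state of $S'$.

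First I would translate the bigrading constraints into a rigid relation between the circle counts. With all labels equal to $1$ we have $\tau(S) = |s|$ and $\tau(S') = |s'|$, and Viro's formula $j = i + \tau + w(D)$ combined with $j(S) = j(S') = j_0$ yields
\[
|s'| - |s| \;=\; i(S) - i(S').
\]
Next, partition the crossings where $s$ and $s'$ disagree into $F_+$ (marker $+$ in $s$, $-$ in $s'$) and $F_-$ (marker $-$ in $s$, $+$ in $s'$), and set $a = |F_+|$, $b = |F_-|$. Since $i$ depends only on markers, $i(S') - i(S) = a - b$, so $|s'| - |s| = b - a$, and the hypothesis $i(S), i(S') \in \{i-1, i, i+1\}$ forces $|a - b| \le 2$.

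Second, I would extract from conditions~(\ref{1}) and~(\ref{2}) an upper bound of the form $|s'| \le |s|$, with strict inequality whenever $b > 0$, by walking from $s$ to $s'$ along a path of single marker flips: each $+ \to -$ flip is controlled by condition~(\ref{1}) and each $- \to +$ flip by condition~(\ref{2}). Comparing this inequality with the rigid equality $|s'| - |s| = b - a$ from the first step forces $b = 0$ and then $a = 0$, so $s' = s$ and therefore $S' = S$.

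The main obstacle is precisely this bootstrapping step: conditions~(\ref{1}) and~(\ref{2}) apply only to states a single flip away from $s$, so propagating them along a multi-flip path from $s$ to $s'$ is not automatic. To carry the induction through, I expect one must exploit the explicit local structure of $D = D(\hat{b}(k))$ from Lemma~\ref{lemma4}, where the disjoint triangles and the virtual crossings surrounding them ensure that the changes in circle count produced by flips at different crossings combine in a controlled, essentially additive way, so that the single-flip bounds iterate without loss.
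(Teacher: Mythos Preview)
Your proposal is more careful than the paper's own argument, and the obstacle you flag is real. The paper does not attempt the multi-flip analysis you sketch; it simply reads off one consequence from each hypothesis. From condition~(1): every single $+\to-$ flip of $s$ leaves the number of circles unchanged, so in Manturov's $\mathbb{Z}_2$ theory each summand of $d(S)$ is the zero map, hence $dS=0$. From condition~(2): if $t$ is obtained from $s$ by a single $-\to+$ flip (so the homological degree drops by~$1$) then $|t|\le|s|$, and therefore every enhanced state $T$ supported on $t$ satisfies $\tau(T)\le|t|\le|s|$, giving $j(T)\le w(D)+(i-1)+|s|=j_0-1<j_0$; thus no such $T$ lies in $C^{i-1,j_0}(D)$. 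From these two facts the paper simply asserts the displayed equality.

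That last step is a non sequitur for the set equality $\mathcal{S}_{i,j_0}=\{S\}$ as stated: conditions~(1) and~(2) govern only states a single marker-flip away from $s$, and neither the paper nor your bootstrapping sketch controls states farther away. Your worry that the single-flip bounds do not iterate is correct, and the paper supplies no diagram-specific argument to fill it. The lemma as literally stated is stronger than what is proved and stronger than what is used downstream. What the two observations above genuinely establish is that $S$ is a cycle which is not a boundary: any enhanced state $T$ with $S$ occurring in $dT$ must have underlying state exactly one $-\to+$ flip from $s$, and condition~(2) forces every such $T$ out of bidegree $(i-1,j_0)$. That already yields $\kh^{i,j_0}(D)\neq 0$, which is all the rest of the paper needs. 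So rather than trying to rescue the set equality via the diagram-specific additivity you propose, the clean fix is to weaken the conclusion of the lemma to $0\neq[S]\in\kh^{i,j_0}(D)$ and prove that directly from (1) and (2) as above.
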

\begin{proof}
We check the conditions (\ref{1}) and (\ref{2}), respectively.  
\begin{enumerate}
\item By the definition of Khovanov homology in the $\mathbb{Z}_2$ coefficient by Manturov.  Suppose that $S \in \mathcal{S}_{i, j_0}$.  The condition~(\ref{1}) implies that replacing a positive marker with the negative maker does not change the number of component.  
By definition, the differential sends $S$ to $0$.  
\item If we replace a negative marker on $S$ by a positive marker, $i$ decreases by $1$.  Suppose that $|t| \le |s|$.  Then, for every enhanced state $T$ obtained from $t$ with labels, $j(T) < j_0$.  Then, $T \notin \mathcal{S}_{i, j_0}$.   
\end{enumerate}
Therefore, (\ref{1}) and (\ref{2}) imply  
\[
\mathcal{S}_{i, j_0} = \{ S : {\textrm{enhanced state}}~|~i-1 \le i(S) \le i+1, j(S)=j_0 \} = \{ S \}.  
\]
\end{proof}
To prove the existence of $i, j (|j| \neq 1)$ such that $\kh^{i, j}(\hat{b}(k))$ $\neq 0$ for every $\hat{b}(k)$, we find an enhanced state satisfying the conditions (\ref{1}) and (\ref{2}) of Lemma~\ref{lemma5}, which implies
\[
0 \to \mathcal{S}_{i, j_0} \to 0.  
\]
We will find such an enhanced state obtained from $\hat{b}(k)$.  In order to define such enhanced states, we prepare Notation~\ref{notationS}.
\begin{notation}\label{notationS}
For braids $A$ and $B$, for every crossing, we put on a negative marker.  
For braids $A^{-1}$ and $B^{-1}$, if we apply the smoothing along a positive (negative, resp.) marker at a real crossing, then, by a short dotted arc (solid arc, resp.), we indicate that we select the positive (negative, resp.) marker, as shown in Fig.~\ref{dotted}.  The symbol $\delta$ indicates a simple circle that simplifies presentations of enhanced states as in Fig.~\ref{defS}.  
In the rest of this paper, every label on each circle is $1$, and thus, the indication of the labels are omitted. 
\begin{figure}[htbp]
\includegraphics[width=10cm]{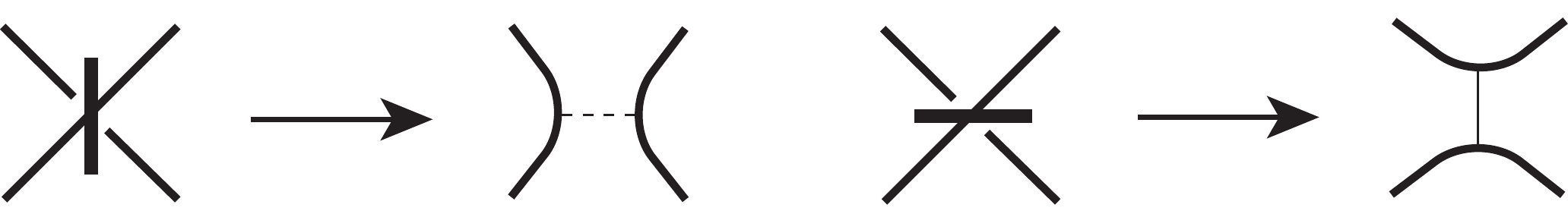}
\caption{a smoothing along a positive marker (left) and a smoothing along a negative marker (right).}\label{dotted}
\end{figure} 
We define an enhanced state $S(\hat{b}(k))$ and $S(\hat{b}(k)^{-1})$ ($2 \le k$) by Fig.~\ref{defS} recursively.      
\end{notation}

\begin{figure}[htbp]
\begin{tabular}{|c|c|c|} \hline
$S(\hat{b}(1))$ & $S(\hat{b}(2))$ & $S(\hat{b}(2)^{-1})$
\\ \hline
\includegraphics[width=2cm]{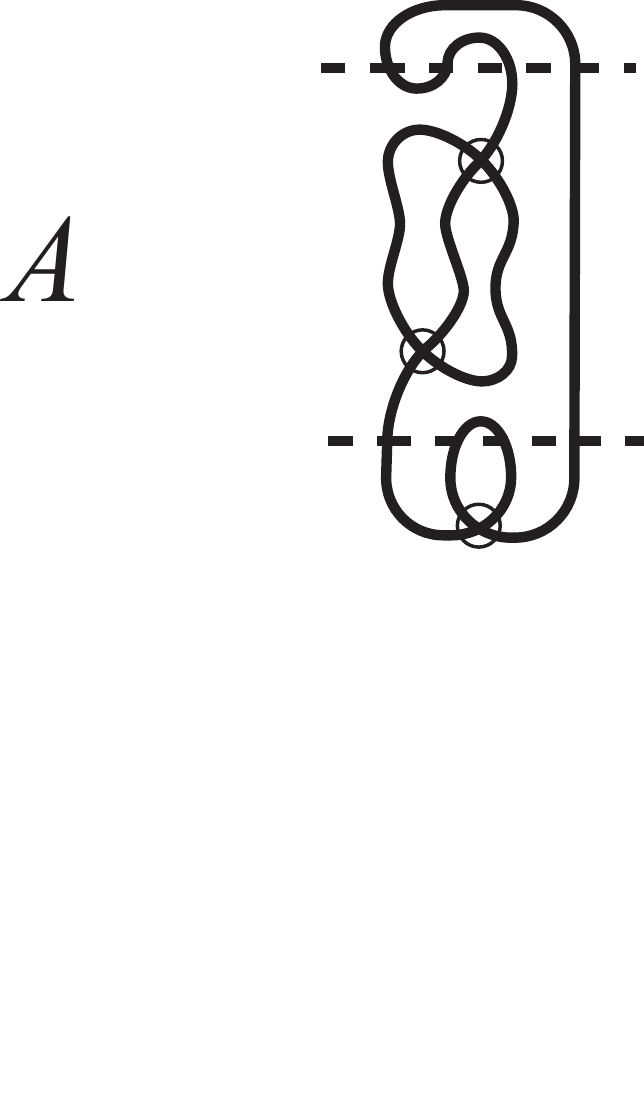}
&
\includegraphics[width=2cm]{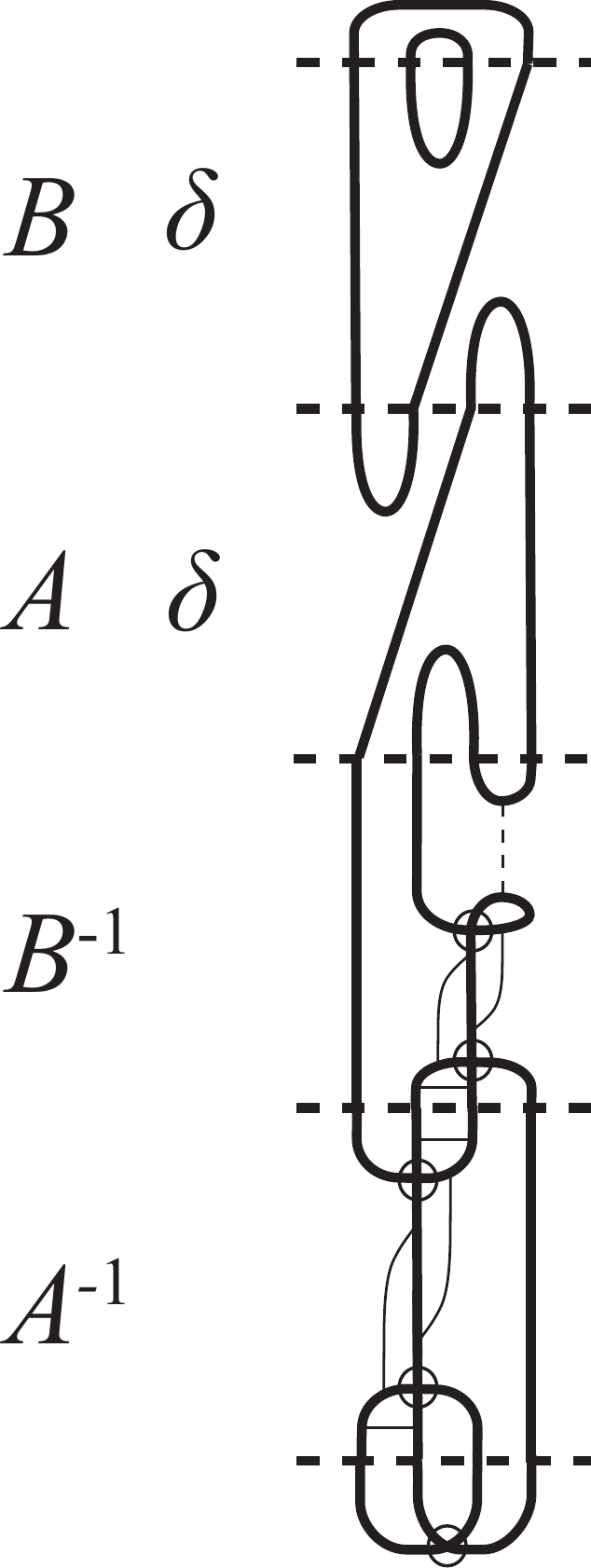}
& 
\includegraphics[width=2cm]{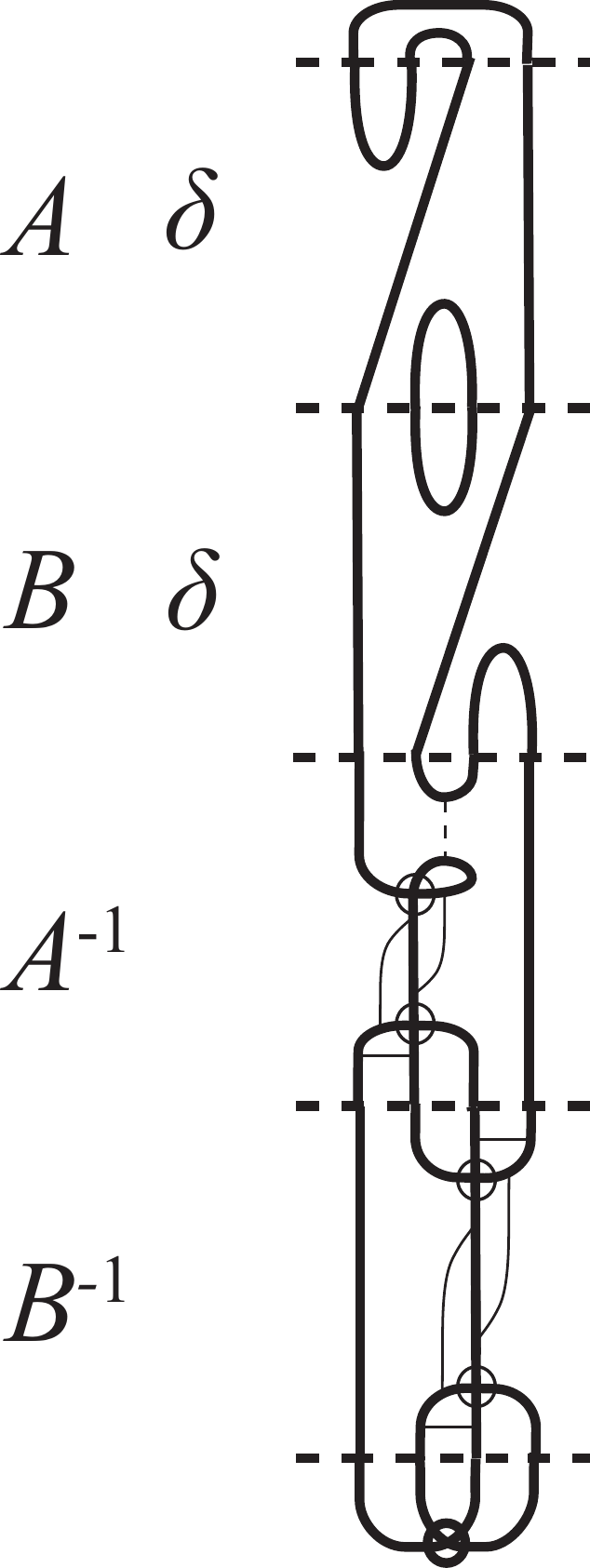}  \\ 
\hline
\end{tabular}
\caption{$S(\hat{b}(1))$, $S(\hat{b}(2))$, and $S(\hat{b}(2)^{-1})$. The symbol $\delta$ indicates a simple circle. }

\vspace{0.5cm}
\begin{tabular}{|c|c|c|c|} \hline
$S(\hat{b}(4u-1))$ & $S(\hat{b}(4u-1)^{-1})$ & $S(\hat{b}(4u))$ & $S(\hat{b}(4u)^{-1})$
\\ \hline
\includegraphics[width=2cm]{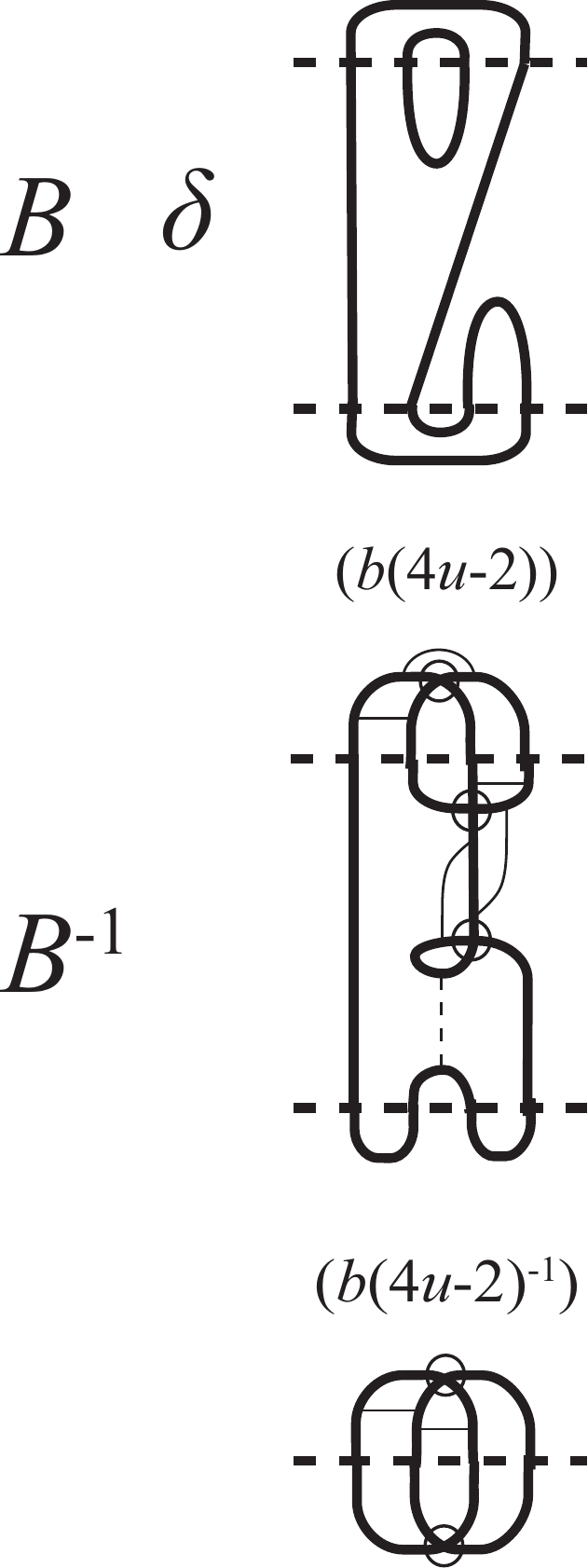}
&
\includegraphics[width=2cm]{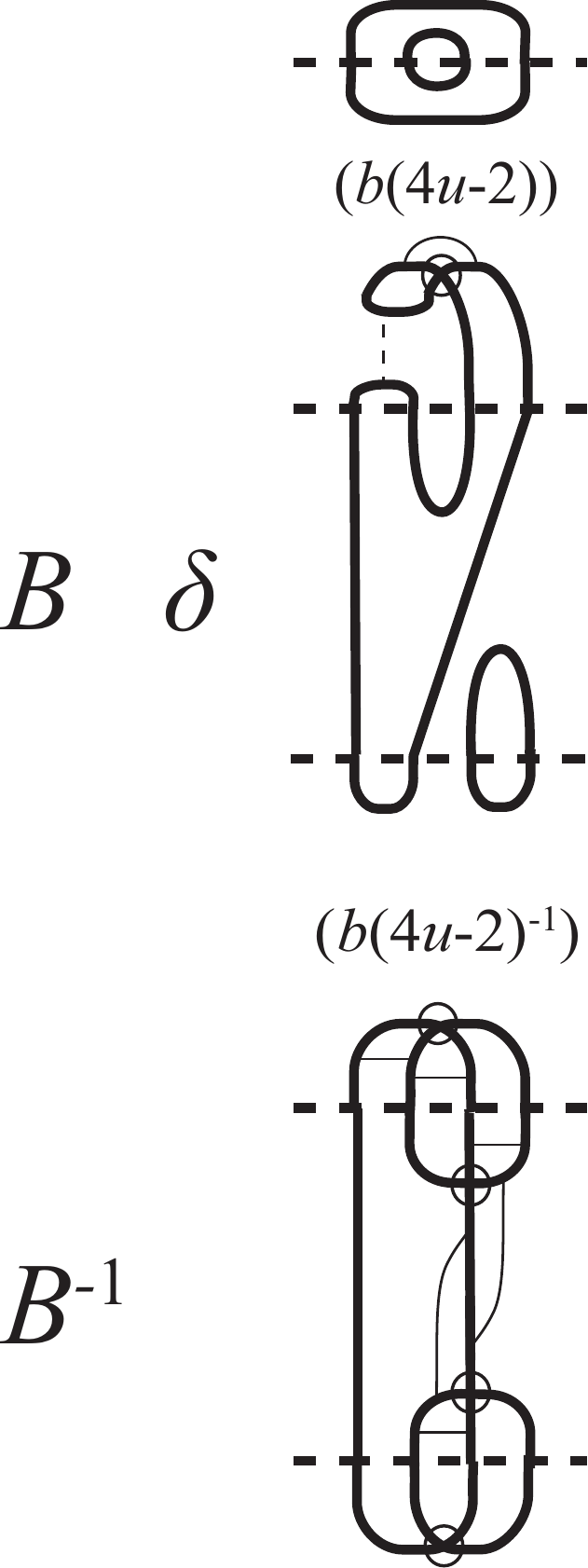}
&
\includegraphics[width=2cm]{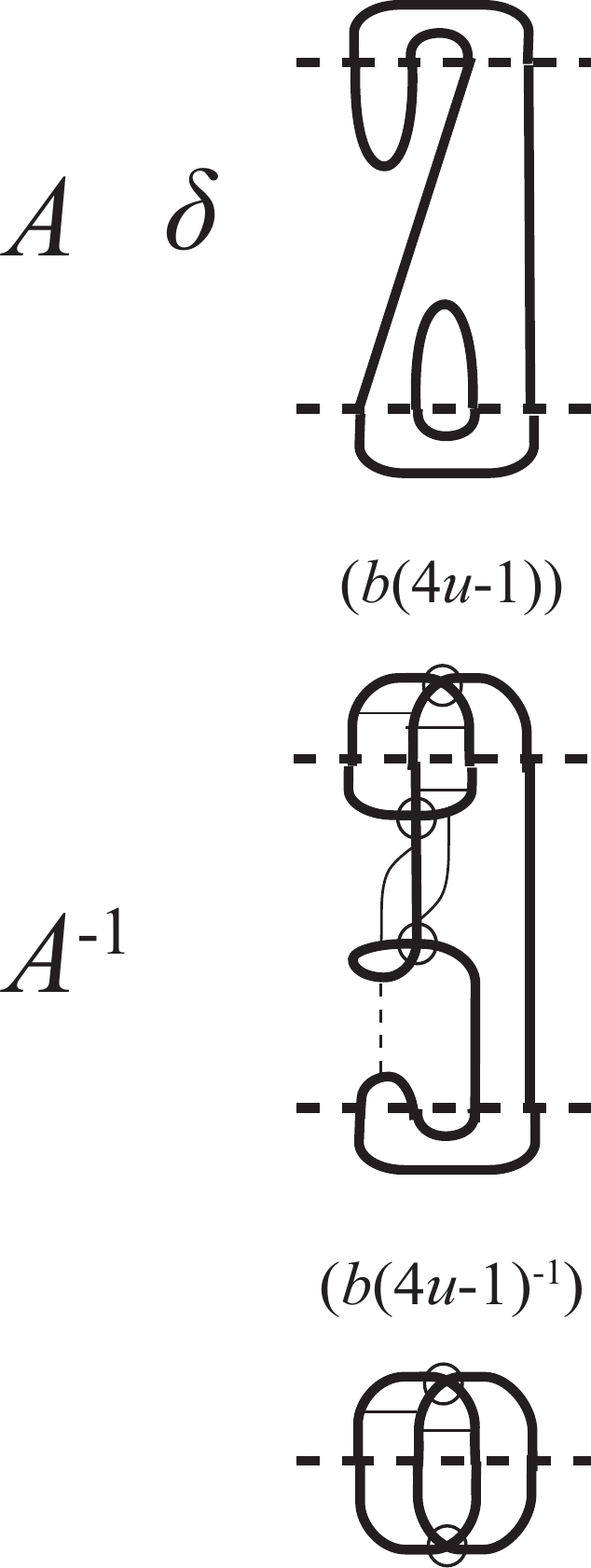}
& 
\includegraphics[width=2cm]{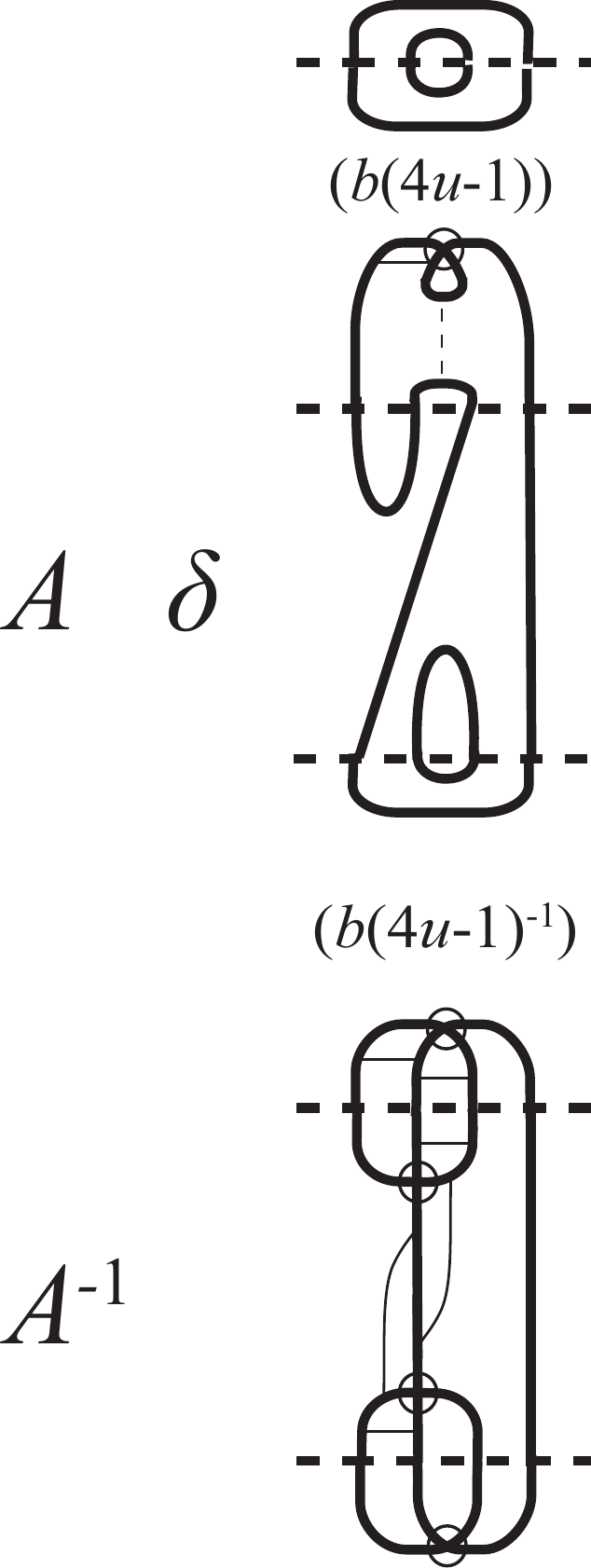}  \\ 
\hline
$S(\hat{b}(4u+1))$ & $S(\hat{b}(4u+1)^{-1})$ & $S(\hat{b}(4u+2))$ & $S(\hat{b}(4u+2)^{-1})$\\ \hline
\includegraphics[width=2cm]{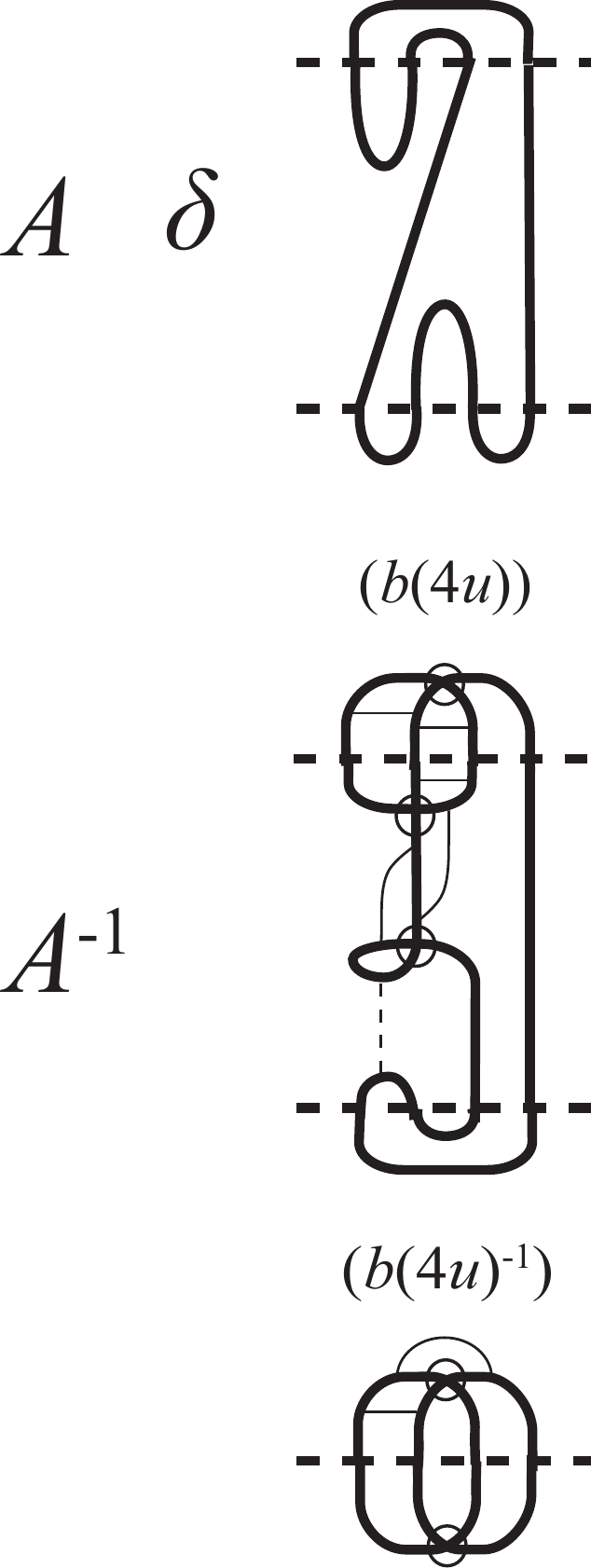}
&
\includegraphics[width=2cm]{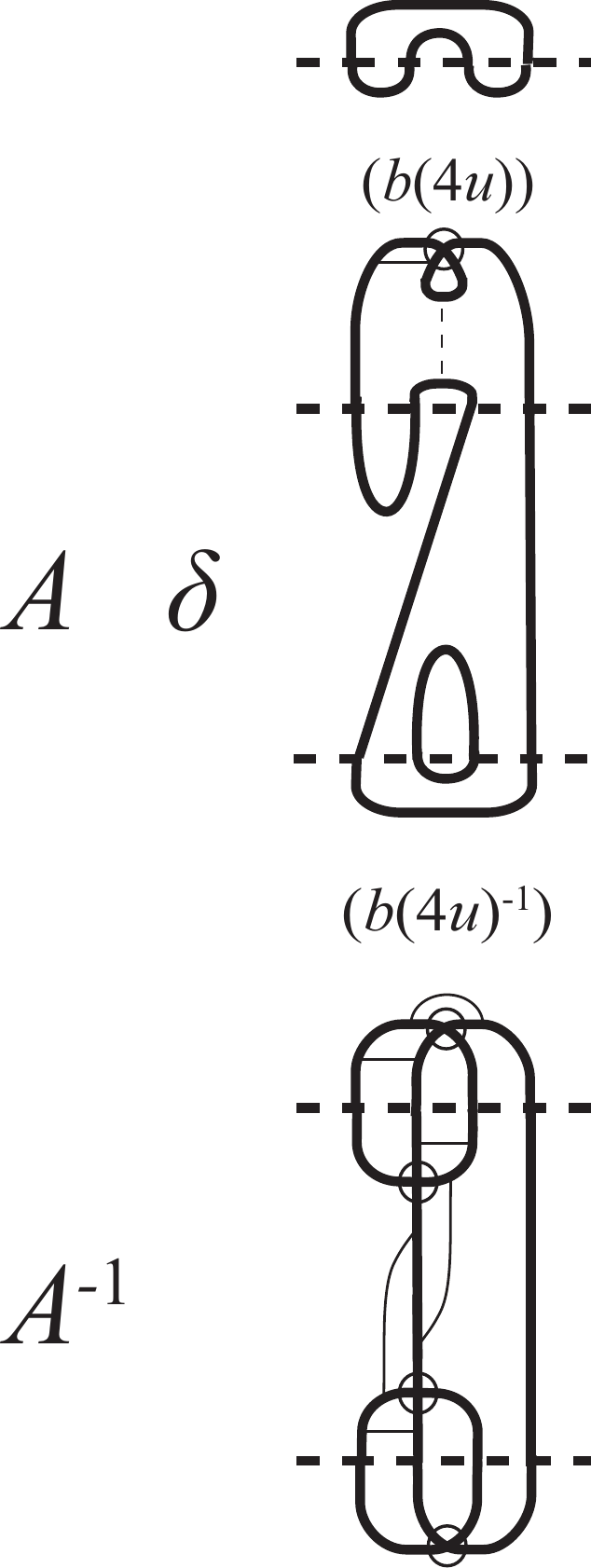}
&
\includegraphics[width=2cm]{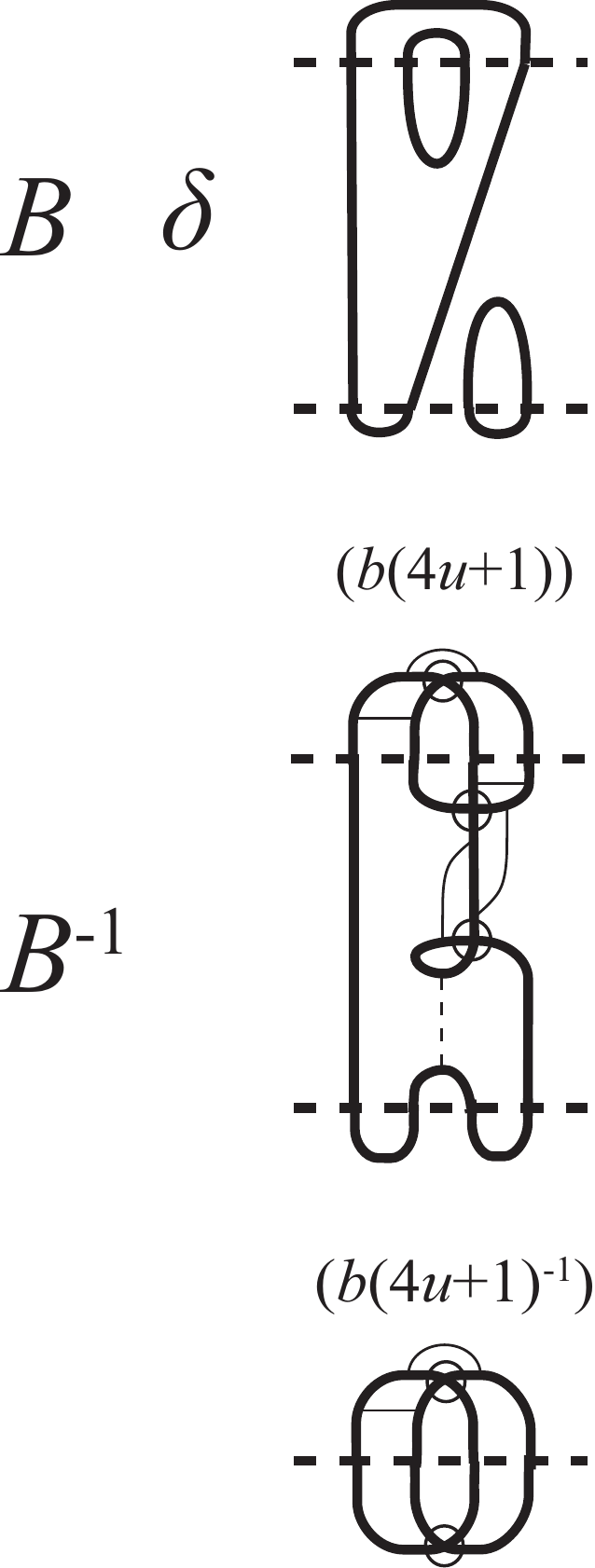}
&
\includegraphics[width=2cm]{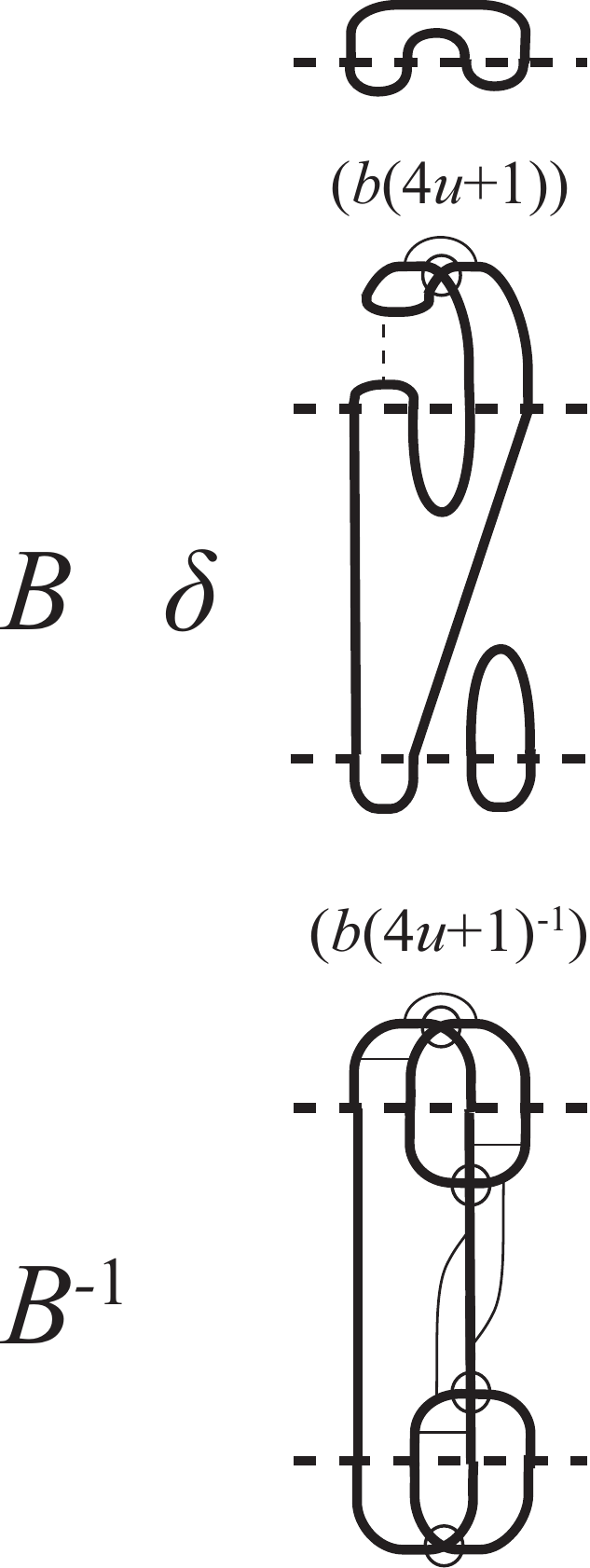}\\ \hline
\end{tabular}
\caption{$S(\hat{b}(k))$ and $S(\hat{b}(k)^{-1})$ $(k \ge 3)$.  $(b(k))$ ($(b(k)^{-1})$, resp.) indicates the part of $S(\hat{b}(k))$ ($S(\hat{b}(k)^{-1})$, resp.) between two adjoining dotted segments.  The symbol $\delta$ indicates a simple circle.}\label{defS}
\end{figure}  
By using Fig.~\ref{defS}, it is elementary to check $S(\hat{b}(k))$ ($1 \le k \le 6$) satisfies the conditions (\ref{1}) and (\ref{2}) by the direct computation for every crossing.  
By using the recursive formulae of definitons, it is easy to see that  $S(\hat{b}(k))$ ($7 \le k$) satisfies the conditions (\ref{1}) and (\ref{2}).  
Finally, we show that $|j(S(k))| \neq 1$ in the following.  Note that, by definition, $w(\hat{b}(k))$ $=$ $0$.  Note also that by definition, $\sigma(S(\hat{b}(k))) < 0$.  Then, for $S(\hat{b}(k))$ ($k \ge 2$), $j(S(\hat{b}(k)))$ $=$ $-\frac{1}{2} \sigma(S(\hat{b}(k))) +\tau(S(\hat{b}(k)))$ $>$ $\tau(S(\hat{b}(k)))$.  Here, it is easy to see that $\tau(S(\hat{b}(k)))$ $>2$ by Fig.~\ref{defS} (see the number of $\delta$).     
Thus, $S(\hat{b}(k))$ ($2 \le k$) satisfies $|j(S(\hat{b}(k)))| \neq 1$ and the conditions (\ref{1}) and (\ref{2}).  By Lemma~\ref{lemma5}, for every $k \in \mathbb{N}$ ($k \ge 2$), there exist $i$ and $j$ ($|j| \neq 1$) such that $\kh^{i, j}(\hat{b}(k))$ $\neq 0$.   
$\hfill \Box$

\section{Definition of Khovanov homology for virtual knots}
In this section, we recall a definition of a Khovanov homology for virtual knots.  It is worth giving the following short review of Khovanov homologies of Viro \cite{Vi} and Manturov \cite{manturov} because readers easily check our notation.  
\subsection{A definition of the Khovanov homology by Viro for knots}\label{secViro}
The Khovanov homology of the Jones polynomial is introduced by Khovanov \cite{khovanov}.  There are at least two famous redefinitions of the Khovanov homology.  Here, we give a brief review of the definition of Viro \cite{Vi}.  Before starting with the review, we define a \emph{link} and a \emph{link diagram}.    

\begin{definition}[link, link diagram]
A \emph{link} of $k$ components is the image of a smooth embedding of the disjoint union of $k$ circles into $\mathbb{R}^3$.  In particular, a knot is a link of $1$ component.     Two links $L$ and $L'$ are \emph{isotopic} if there exists a smooth family of homeomorphisms $h_t : \mathbb{R}^3$ $\to$ $\mathbb{R}^3$ for $t \in [0, 1]$ such that $h_0$ is the identity map of $\mathbb{R}^3$ and $h_1 (L)$ $=$ $L'$.  Then, such a family of $h_t$ is called an \emph{isotopy} of $\mathbb{R}^3$.   
\end{definition}
For an unoriented link diagram, we recall the definition of the Kauffman bracket $\langle \cdot \rangle$ that is the map from the set of unoriented link diagrams to $\mathbb{Z}[A, A^{-1}]$.    For a link diagram $D$, let $D_0$ ($D_{\infty}$, resp.) be link diagrams obtained by replacing a disk ($\supset$ a single crossing) with the other disk $d_0$ (res.~$d_{\infty}$), as shown in Fig.~\ref{kh1}.
\begin{figure}[htbp]
\includegraphics[width=3cm]{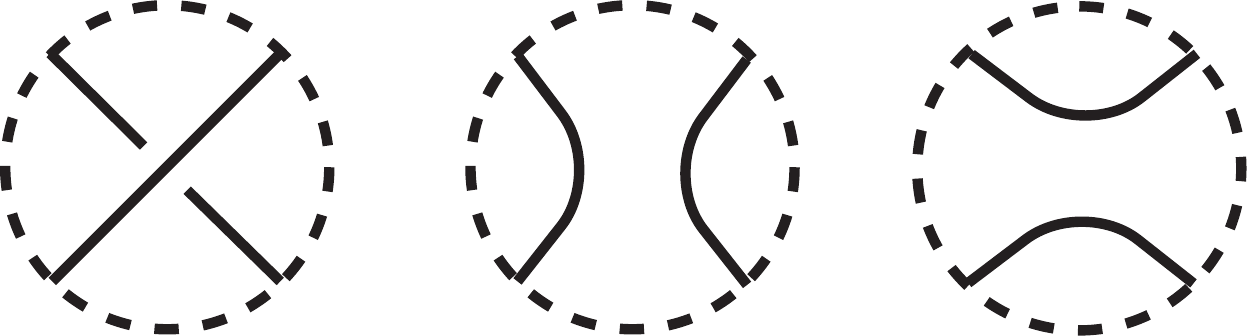}
\caption{(a) a disk ($\supset$ a single crossing), (b) $d_0$, (c) $d_{\infty}$.}\label{kh1}
\end{figure}
Then, it is defined by the following conditions:
\begin{enumerate}
\item $\langle \emptyset \rangle$ $=$ $1$.  
\item $\langle D \amalg {\textrm{knot diagram with no crossings}} \rangle$ $=$ $(-A^2 -A^{-2}) \langle D \rangle$, where $\amalg$ stands for disjoint sum.  
\item $\langle D \rangle$ $=$ $A \langle D_0 \rangle$ $+$ $A^{-1} \langle D_{\infty} \rangle$.  
\end{enumerate}
For each crossing of $D$, the replacing $D$ with $D_0$ or $D_{\infty}$ is called a \emph{smoothing}.  
Note that a smoothing from $D$ to $D_0$ ($D_{\infty}$, resp.)  corresponding to $A$ ($A^{-1}$, resp.) and then we call the smoothing corresponding to $A$ a \emph{positive} (\emph{negative}, resp.) smoothing.  
To specify a direction of a smoothing of a crossing, we use a sufficient small segment on the crossing.  The small segment is called a \emph{marker} (see Fig.~\ref{kh2}).   
\begin{figure}[htbp]
\includegraphics[width=3cm]{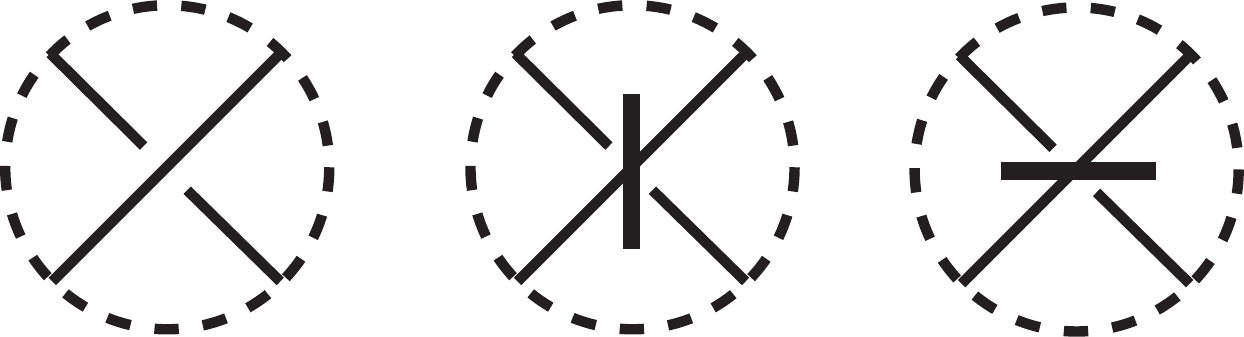}
\caption{(a) a disk ($\supset$ a single crossing), (b) positive marker, (c) negative marker.}\label{kh2}
\end{figure}
By definition, each signed marker determines the direction of a  smoothing for a crossing (Fig.~\ref{kh3}).
\begin{figure}[htbp]
\includegraphics[width=4cm]{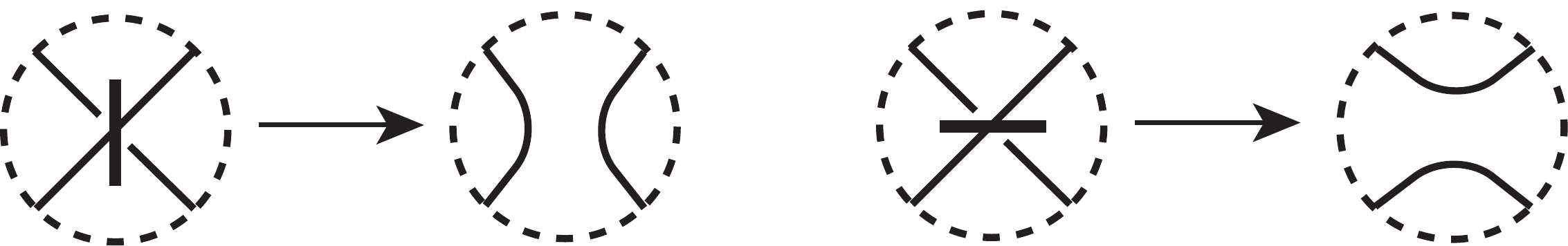}
\caption{Each signed marker determines the direction of a smoothing for a crossing.}\label{kh3}
\end{figure}    
For a given link diagram, suppose that we smooth every crossing along a marker.  Then, the smoothened link diagram is called a \emph{state}. 
 
Next, we assign a \emph{label}, $x$ or $1$, for every circle of a state.  For a state, we define a map $\de$ from the set of circles in the state to $\{ -1, 1 \}$ by $\de(x)= -1$ and $\de(1)=1$. 
A state whose circles have labels, each of which is $x$ or $1$, is called an \emph{enhanced state}.   Let $S$ be an enhanced state.  Then, let $\sigma(S)$ be the number of positive markers minus the number of negative markers for a state.  
Let $\tau(S)$ $=$ $\sum_{{\textrm{circle $y$ in $S$}}} \de(y)$.   
For a knot diagram $D$, let $w(D)$ be the number of positive crossings minus the number of negative crossings.  
Then, let 
\[
i(S) = \frac{1}{2}(w(D)-\sigma(S)), j(S) = w(D) + i(S) + \tau(S).  
\]
Here, note that the unnormalized Jones polynomial $\hat{J}(L)$ is defined by
\[
\hat{J}(L) = (-1)^{w(D)}\langle D \rangle.  
\]
Then, for a link diagram $D$ of a link $L$, $\hat{J}(L)$ is obtained by
\[
\hat{J}(L) = \sum_{{\text{enhanced state}}~S~{\text{of}}~D} (-1)^{i(S)} q^{j(S)}.  
\]
For the well-known Jones polynomial $V_L (t)$ with $V_{\text{unknot}} (t) =1$, the unnormalized Jones polynomial $\hat{J}(L)$ $=$ $(q+q^{-1}) V_L (q)$, with the variable $q$ replaced by $q= - t^{1/2}$.  

Now, we define the Khovanov chain group.  Let $D$ be a link diagram and $\mathcal{S}(D)$ the set of enhanced states of $D$.  
Let $C(D)$ $=$ $\mathbb{Z}_2 [\mathcal{S}(D)]$.  
Then, let 
$C^{i, j}(D)$ is the subgroup of $C(D)$ generated by enhanced states, each of which satisfies $i(S)=i$ and $j(S)=j$.
  
For a state $s$ having a positive marker on a crossing, there exists a state $s'$ such that $s'$ is obtained by replacing a single positive marker of $s$ with the negative marker at the crossing.  Let $S$ be an enhanced state that is a state $s$ with labels.  Then, let $T_S$ be an enhanced state that is the state $s'$ with labels, as shown in Fig.~\ref{kh4}.
\begin{figure}[htbp]
\centering
\includegraphics[width=5cm]{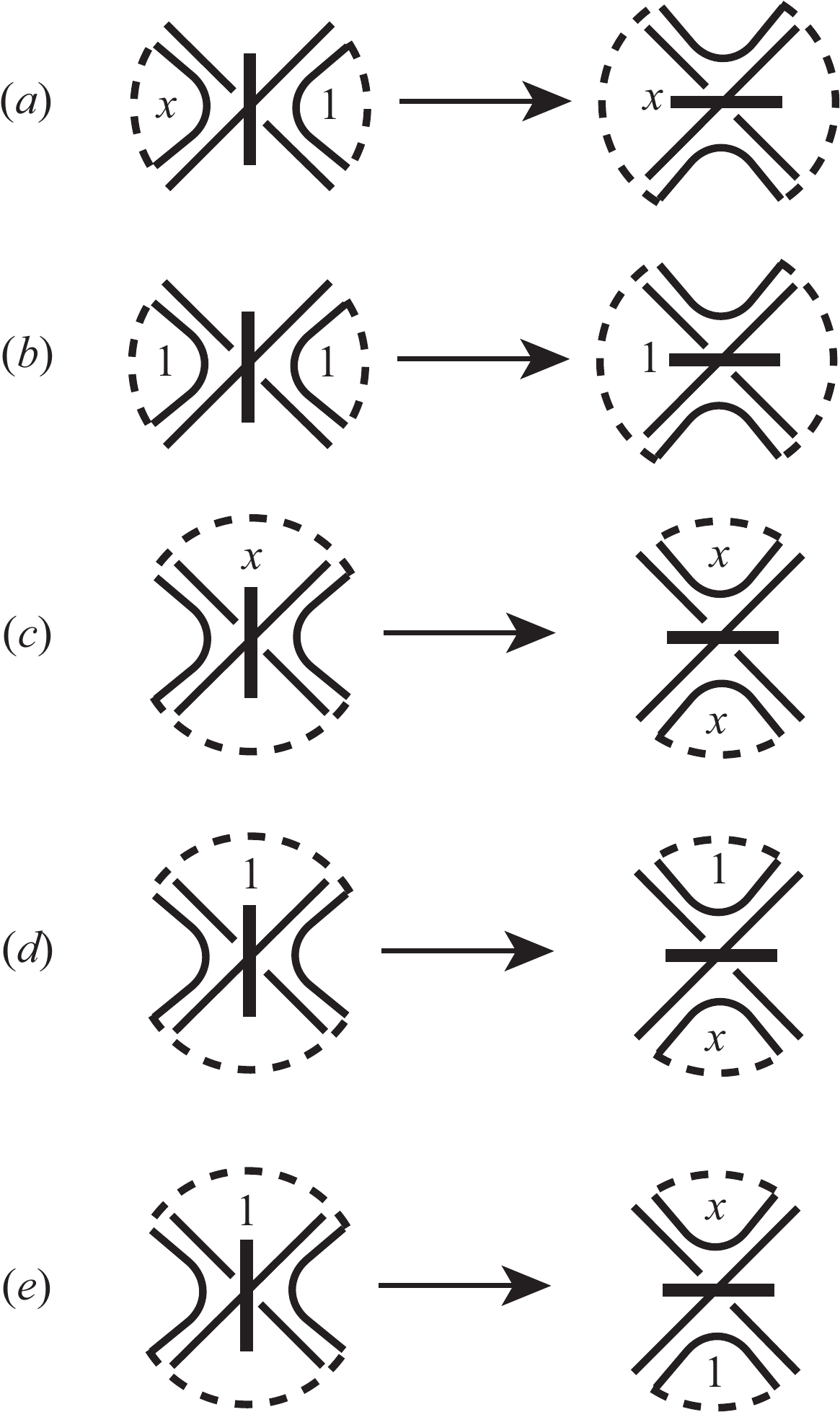}
\caption{Pairs $S, T_S$.}\label{kh4}
\end{figure}    
Then, for every enhanced state $S$, $d(S)$ is defined by
\[
d(S) = \sum_{{\text{enhanced state}}~S} ~T_S,
\]
By definition, this is naturally extended to the homomorphism $d$ from $C^{i, j}(D)$ to $C^{i+1, j}(D)$.  It is well-known fact that $d$ is a coboundary operator (i.e., $d^2 = 0$).  Traditionally, the homomorphism $d$ is called the differential in the case of Khovanov homology.  In \cite{khovanov}, Khovanov obtained Theorem~\ref{kh_thm}: 
\begin{theorem}[Khovanov \cite{khovanov}]
\label{kh_thm}
Let $L$ be a link and $D$ a link diagram.  The homology group $H^{i, j}(D)$ corresponding $\{C^{i, j}(D), d^i \}$ is an isotopy invariant of $L$, and thus, this homology can be denoted by $H^{i, j}(L)$.  The homology group $H^{i, j}(L)$ satisfies
\[
\hat{J}(L) = \sum_j q^j \sum_i (-1)^i \rank\, H^{i, j} (L).  
\] 
\end{theorem}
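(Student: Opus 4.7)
The plan is to verify three things in order: (i) that $d$ is a differential, i.e.\ $d\circ d=0$, (ii) that the graded Euler characteristic of the complex $(C^{i,j}(D),d)$ equals $\hat{J}(L)$, and (iii) that the bigraded homology is invariant under the three Reidemeister moves. Step (i) is a direct combinatorial check: for a fixed enhanced state $S$, the term $d^2(S)$ is a sum over ordered pairs of crossings $(c_1,c_2)$ carrying positive markers in $S$ where we flip the marker at $c_1$ and then at $c_2$. For each unordered pair $\{c_1,c_2\}$ the two orders of flipping yield the same enhanced state, so each term appears twice and vanishes over $\mathbb{Z}_2$. One must also check that relabelling circles (when a smoothing merges or splits components) respects this cancellation; a routine case-by-case inspection confirms it.

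Step (ii) is essentially formal once the definitions of $i(S)$ and $j(S)$ are in place. The chain group $C^{i,j}(D)$ is the free $\mathbb{Z}_2$-module on enhanced states with those indices, so
\[
\sum_{i,j}(-1)^i q^j\,\rank C^{i,j}(D)=\sum_{S\in\mathcal{S}(D)}(-1)^{i(S)}q^{j(S)}=\hat{J}(L),
\]
the second equality being the state-sum formula for $\hat{J}(L)$ recalled just before the theorem. Because the Euler characteristic is preserved when passing from a chain complex to its homology, the left-hand side also equals $\sum_{i,j}(-1)^iq^j\,\rank H^{i,j}(D)$, giving the Jones polynomial identity once invariance is secured.

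Step (iii) is where the work lies, and it is the main obstacle. For each Reidemeister move one must produce a chain map between the Khovanov complexes of the two diagrams, show it is a quasi-isomorphism, and verify that it respects the bigrading (tracking the shifts in $w(D)$ and $\sigma(S)$). The standard strategy I would follow: for $R_1$, split the complex of the curled diagram as a direct sum of an acyclic subcomplex and a subcomplex isomorphic (after the grading shift induced by the change of writhe) to the complex of the uncurled diagram; for $R_2$, exhibit an analogous deformation retraction of the four-state cube onto a complex isomorphic to that of the simpler diagram; for $R_3$, compose the $R_2$ invariance on both sides with the braid-like reshuffling so that the two resulting complexes are identified on the nose, as in Viro~\cite{Vi}.

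The hard part is $R_3$, because one must keep track of a cube of $2^3=8$ resolutions on each side and verify not only that the obvious bijection of resolutions intertwines the differentials up to chain homotopy, but that the homotopies assemble consistently with the bigrading shifts. Everything else (the Euler characteristic argument, the $d^2=0$ check, and $R_1$) is bookkeeping once the framework is set up; the subtlety throughout is to remember that this paper's Khovanov homology is Manturov's $\mathbb{Z}_2$-version adapted to virtual diagrams (used elsewhere in the paper for $\kh^{i,j}(\hat{b}(k))$), so one should eventually check that the classical argument above extends verbatim to that setting, which it does because the marker-flipping differential never distinguishes real from virtual crossings.
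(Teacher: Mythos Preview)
The paper does not prove this theorem at all. Theorem~\ref{kh_thm} is stated in Section~\ref{secViro} purely as a cited result of Khovanov~\cite{khovanov}, introduced by the sentence ``In \cite{khovanov}, Khovanov obtained Theorem~\ref{kh_thm}:'' and followed immediately by Section~\ref{secManturov} with no intervening proof. The section is explicitly a review of definitions and notation (``It is worth giving the following short review \ldots\ because readers easily check our notation''), so there is nothing in the paper to compare your proposal against.

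That said, your outline is the standard one and is essentially correct as a sketch of how the theorem is actually proved in the literature (Khovanov~\cite{khovanov}, Viro~\cite{Vi}). Two minor remarks. First, your last paragraph conflates this theorem with the virtual extension: Theorem~\ref{kh_thm} lives in Section~\ref{secViro} and concerns classical links only; Manturov's $\mathbb{Z}_2$ extension is the separate Theorem~\ref{ma_thm}, so the comment that ``the classical argument above extends verbatim'' to virtual diagrams is not needed here (and is in fact the content of a different theorem). Second, your $d^2=0$ argument appeals to $\mathbb{Z}_2$ coefficients to get cancellation in pairs; over $\mathbb{Z}$ one needs signs coming from an ordering of crossings, but since the paper works over $\mathbb{Z}_2$ throughout, your shortcut is consistent with the setting.
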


\subsection{A definition of the Khovanov homology by Manturov for virtual knots in the case of the coefficient $\mathbb{Z}_2$}\label{secManturov}
Manturov extended the definition of the Khovanov homology to that of virtual knots by adding the map between enhanced states obtained by virtual knot diagrams.  The problem is that the change of a single positive marker to define the differential does not require the change of the component enhanced states for all cases as in Fig.~\ref{kh4}.  This is because, for virtual knot diagrams, in general, there exists an enhanced state $S$ and a positive marker $p$ such that even if $p$ is changed, then the number of component of $S$ does not change.   
Fortunately, in the case of the coefficient $\mathbb{Z}_2$, the definition was extended to virtual knots straightforwardly by regarding these cases as zero maps (i.e., in particular, the switching of the markers unchanging the number of components change by $\pm 1$ corresponds to a zero map).  It is known that the extended homomorphism is also a coboundary operator. We denote the homomorphism by the same symbol $d$.  
In this paper, the Khovanov homology with the coefficient $\mathbb{Z}_2$ obtained by Manturov is denoted by $\kh^{i, j}$.   
In \cite{manturov}, Manturov obtained Theorem~\ref{ma_thm}: 
\begin{theorem}[Manturov \cite{manturov}]
\label{ma_thm}
Let $K$ be a virtual knot and $D$ a virtual knot diagram of $K$.  The homology group $\kh^{i, j}(D)$ corresponding $\{C^{i, j}(D), d^i \}$ is invariant under generalized Reidemeister moves, and thus, this homology can be denoted by $\kh^{i, j}(K)$.   
\end{theorem}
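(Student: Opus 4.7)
The plan is to verify invariance under each of the generalized Reidemeister moves of Fig.~\ref{fig:GRM} separately, following the classical template of Viro (Section~\ref{secViro}) but with the modifications imposed by the $\mathbb{Z}_2$-coefficient zero-map convention described just before the statement.

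First, I would confirm that $d$ really is a coboundary operator on the chain complex associated to a virtual diagram $D$. Over $\mathbb{Z}_2$ one expands $d^2$ as a sum of compositions indexed by unordered pairs of real crossings at which markers are switched; pairs where both switchings genuinely change the number of circles cancel in pairs exactly as in Viro's classical computation, while any pair involving a switching that does not change the component count contributes a zero map by convention and so does not obstruct $d^2=0$. This is the essential place where the $\mathbb{Z}_2$ coefficient is needed.

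Next, I would dispatch the purely virtual moves (virtual R1, R2, R3) and the mixed semi-virtual move. None of these alters the set of real crossings, and for any choice of markers at the real crossings the resulting state is the same collection of planar circles up to virtual Reidemeister moves applied circle-by-circle. Hence $C^{i,j}(D)=C^{i,j}(D')$ literally as groups, the differentials coincide, and the identity map is an isomorphism of complexes; in particular $\kh^{i,j}(D)=\kh^{i,j}(D')$.

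The substantial content is the three classical Reidemeister moves. For R1, I would split the chain complex of the diagram carrying the extra kink into a subcomplex isomorphic to $C^{*,*}(D)$ and an acyclic complement, the split depending on the sign of the kink and on the two local smoothings at the new crossing; the shifts produced by $w(D)$ and $\sigma$ cancel, so $\kh^{i,j}$ is preserved. R2 is handled analogously by grouping the four local smoothings of the two new crossings, three of which assemble into an acyclic piece. For R3, I would follow Viro's comparison of both sides of the move with a common subcomplex obtained by fixing a chosen marker at one of the three crossings involved, and show that after the R2-type cancellations the two remaining subcomplexes are identified by the evident bijection of enhanced states. The hard part will be R3 when virtual strands pass through the triangle: I must verify that the switching maps which are forced to be zero by the $\mathbb{Z}_2$ convention appear symmetrically on both sides of the move, so that the identification of the simplified subcomplexes survives the presence of these zero entries. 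A case analysis of how virtual arcs can meet the R3 triangle is therefore the main bookkeeping step, and it is the point at which Manturov's modification of Viro's construction does its real work.
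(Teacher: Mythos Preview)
The paper does not contain a proof of this theorem; it is quoted as a result of Manturov \cite{manturov} and used as a black box (in Section~\ref{secComputation}) to certify that the virtual knots $\hat{b}(k)$ are nontrivial. There is therefore no argument in the paper to compare your proposal against.

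Your outline is a reasonable sketch of the standard proof. One small correction: in the classical Reidemeister~III move all three strands are real, so no virtual arcs literally pass through the R3 triangle. The genuinely virtual phenomenon you must track is rather that the three arcs in the R3 disk may be connected to one another through virtual crossings \emph{outside} the disk, so that a local marker switch can leave the number of state circles unchanged (the ``single-cycle'' bifurcation handled by the zero-map convention). Your remark that these zero contributions appear symmetrically on the two sides of R3 is the correct point to check, and over $\mathbb{Z}_2$ it goes through; this is exactly Manturov's observation. With that adjustment, your plan matches the known argument that the paper is citing.
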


\section*{Acknowledgements}
The work was partially supported by Grant-in-Aid for Scientific Research~(S) (Number: 24224002) and by Grant for Basic Science Research Projects from The Sumitomo Foundation (Number: 160556).  
N.~Ito was a project researcher of Grant-in-Aid for Scientific Research~(S) (2016.4--2017.3).  The authors would like to thank the referee for the comments.

\end{document}